\documentclass[nonblindrev,copyedit]{informs3} 

\UseRawInputEncoding

\usepackage{graphicx}
\usepackage{amsfonts}
\usepackage{subfigure}
\usepackage{longtable}
\usepackage{rotating}
\usepackage{epstopdf}
\usepackage{color}
\usepackage{multirow}
\usepackage{bbm}
\usepackage{setspace}
\usepackage{enumerate}
\usepackage{diagbox}
\usepackage{booktabs}
\usepackage{makecell}

\usepackage{natbib}
\usepackage{hyperref}
\hypersetup{
    colorlinks=true,
    linkcolor=black,
    filecolor=black,
    urlcolor=black,
    citecolor=blue,
}

\usepackage{endnotes}
\makeatletter
\def\enoteheading{\section*{\notesname
		\@mkboth{\MakeUppercase{\notesname}}{\MakeUppercase{\notesname}}}%
	\noindent
}
\makeatother

\usepackage{lipsum}
\makeatletter
\renewcommand\footnoterule{%
  \kern-3\p@
  \hrule\@width 6.5in
  \kern2.6\p@}
\makeatother

 \bibpunct[, ]{(}{)}{,}{a}{}{,}%
 \def\bibfont{\fontsize{9}{11}\selectfont} 
 \def\bibsep{\smallskipamount} 
 \def\bibhang{24pt}%

 \def\argmax{{\rm argmax}\,}
 \def\argmin{{\rm argmin}\,}

 \def\do{{\rm dom }\,}

 \def\today{\ifcase\month\or January\or February\or  March\or  April\or
 May\or June\or July\or August\or  September\or October\or November\or
 December\fi  \space\number\day, \number\year}

 \def\qed{\hspace*{10pt}\hfill{$\square$}\hfilneg\par}
 \def\sqr#1#2{{\vcenter{\hrule height .#2pt
       \hbox{\vrule width .#2pt height#1pt \kern#1pt\vrule width.#2pt}
                        \hrule height.#2pt}}}
 \def\square{\mathchoice\sqr54\sqr54\sqr{2.1}3\sqr{1.5}3}

 \theoremstyle{TH}
 \newtheorem{thm}{Theorem}
 \newtheorem{lem}{Lemma}
 \newtheorem{prop}{Proposition}
 
 \newtheorem{exap}{Example}

 \newcommand{\enu}{\begin{enumerate}}
 \newcommand{\eenu}{\end{enumerate}}
 \newcommand{\des}{\begin{description}}
 \newcommand{\edes}{\end{description}}
 \newcommand{\iit}{\begin{itemize}}
 \newcommand{\eiit}{\end{itemize}}
 \newcommand{\tab}{\begin{tabular}}
 \newcommand{\etab}{\end{tabular}}
 \newcommand{\ben}{\begin{equation}}   \newcommand{\een}{\end{equation}}
 \newcommand{\bea}{\begin{eqnarray}}   \newcommand{\eea}{\end{eqnarray}}
 \newcommand{\beq}{\begin{eqnarray*}}  \newcommand{\eeq}{\end{eqnarray*}}
\newcommand{\PaperTitle}{From Optimization to Satisficing: Robust Screening under Distributional Ambiguity}

\TheoremsNumberedThrough     

\EquationsNumberedThrough    

\begin{document}



\RUNTITLE{\PaperTitle}

\TITLE{\PaperTitle}

\ARTICLEAUTHORS{%

\AUTHOR{Shumin Ma}
\AFF{Guangdong Provincial/Zhuhai Key Laboratory of IRADS, and Department of Mathematical Sciences, Beijing 
Normal-Hong Kong Baptist University, Zhuhai, 519087, China.
\EMAIL{shuminma@bnbu.edu.cn}}

\AUTHOR{Lijian Lu}
\AFF{HKUST Business School, Hong Kong University of Science and Technology. \EMAIL{lijianlu@ust.hk}}

\AUTHOR{Daniel Zhuoyu Long}
\AFF{Department of Systems Engineering and Engineering Management, The Chinese University of Hong Kong, Hong Kong, China.
\EMAIL{zylong@se.cuhk.edu.hk}}

} 

\ABSTRACT{%
\textbf{Problem definition:} This study investigates a robust screening problem under distributional ambiguity, where a seller is uncertain about a buyer's true valuation distribution, knowing only that it lies near a reference distribution measured by the Wasserstein metric. Traditional robust optimization (RO) approaches prioritize maximizing worst-case revenue within predefined ambiguity sets, often yielding seller-centric outcomes and reliance on precise set specifications. \textbf{Methodology/results:} We propose a robust satisficing (RS) framework aimed at attaining a specified revenue target by minimizing the worst-case shortfall across all potential distributions. Our approach offers a tractable formulation and detailed characterization of optimal mechanisms using randomized pricing strategies. We also assess the out-of-sample efficacy of a simple posted pricing mechanism, finding it particularly effective with lower targets and positively skewed valuations, where smaller valuations have high probability mass. Comparing RO with RS, we find that RS consistently enhances buyer surplus when the reference distribution has an increasing hazard rate and increases out-of-sample seller revenue with positively skewed true valuations. \textbf{Managerial implications:} Our analysis indicates that a target-driven RS framework enhances buyer surplus and fairness by offering more opportunities to lower-valuation buyers, potentially boosting overall revenue in scenarios with demand skewed toward these valuations. This approach offers a practical and viable modeling alternative to conventional RO methods, effectively overcoming the challenges of ambiguity set calibration while ensuring broader equitable access for diverse buyers.
}%


\KEYWORDS{Robust optimization; robust satisficing; mechanism design; distributional ambiguity; Wasserstein metric}
\HISTORY{\today}

\maketitle

%


\vspace{-0.2in}

\section{Introduction}

In the classic monopoly mechanism design problem, sellers aim to maximize expected revenue from selling a product to buyers with private, unknown valuations. When valuation distributions are completely known, deterministic posted pricing mechanisms are optimal \citep{myerson1981optimal,riley1981optimal}. However, in practice, acquiring precise distributional information is challenging and costly. Sellers often lack reliable historical data, and valuations can fluctuate due to market dynamics, competition, or external shocks. Relying on potentially inaccurate distributional assumptions can lead to significant revenue losses. An ubiquitous approach to this issue is robust optimization (RO), where sellers plan for the worst-case distribution within an ambiguity set. While RO provides a viable way to address the challenges of unknown distribution, its success hinges on the specification of the ambiguity set. A too-narrow set remains vulnerable to misspecification, while a too-broad set becomes overly conservative. Furthermore, calibrating the size of this ambiguity set is data-intensive, especially in small-sample or nonstationary environments.

To address these challenges, this paper introduces the robust satisficing (RS) paradigm as an alternative approach for designing effective mechanisms in the face of distributional ambiguity. Originating from recent advances in decision theory \citep{long2023robust,Sim2025RS}, RS shifts focus from ambiguity-set calibration in RO to target-driven robustness. In this framework, decision-makers set a revenue target and minimize the worst-case shortfall from it across all potential distributions, aligning the design problem with managerial goals centered on target attainment and removing the need to specify an ambiguity set. To illustrate the key difference between these approaches, consider a market where many buyers have low valuations, and fewer have high valuations. An RO approach concentrates on high-valuation buyers to safeguard revenue against worst-case scenarios. In contrast, RS may allocate more frequently to low-valuation buyers to ensure the revenue target is met. This approach not only broadens access for low-valuation buyers but also increases buyer surplus through distinct allocation patterns.

Despite their differing philosophies in addressing model uncertainties, the theoretical connections between RO and RS in mechanism design have been limited. This paper seeks to uncover their theoretical implications and deepen understanding of their practical performance. Our analysis shows that both models result in the same family of mechanisms, implementable through randomized pricing strategies with a piecewise linear payment function of the valuation. Although both paradigms belong to the same class of selling mechanisms, they differently influence allocation decisions and the distribution of surplus across buyers. The RO approach prioritizes protection against worst-case revenue loss, typically concentrating allocation on higher-valuation buyers. In contrast, the RS framework focuses on meeting a target and allows for greater deviations in adverse scenarios, expanding access for lower-valuation buyers and enhancing buyer surplus and fairness. When true valuation distributions concentrate on low valuations or fall outside typical RO ambiguity sets, the target-driven RS framework can improve revenue performance compared to RO. Thus, these robustness paradigms present different trade-offs between protection, fairness, and revenue.

Our first contribution is a comprehensive characterization of the optimal mechanism within the RS framework. We show that the infinite-dimensional satisficing problem can be reformulated into a tractable closed-form solution. The optimal mechanism features a continuum of randomized prices with allocation probabilities that follow a logarithmic pattern. This structure mirrors the optimal mechanism derived in the RO framework \citep{chen2024screening,wang2024minimax}. Notably, we identify a specific condition involving two exogenous hyperparameters -- ambiguity size in the RO framework and revenue target in the RS framework -- under which these two robust paradigms yield the same optimal mechanism. To the best of our knowledge, this is the first equivalence result between these two robust approaches in the mechanism design literature with model uncertainties.

Our second contribution is a detailed analysis of the effectiveness of simple deterministic posted pricing strategies within the RS framework. We explicitly characterize the optimal posted price for both continuous and discrete reference distributions, including empirical distributions, and evaluate its performance against the optimal randomized mechanism. Although posted pricing is often favored for its simplicity and is optimal when model uncertainties are absent, our findings indicate that it offers weaker robustness guarantees compared to the optimal RS mechanism when model uncertainties are present. Specifically, for heavy-tailed distributions such as power distributions, the optimal mechanism -- employing randomized pricing strategies -- consistently leads to higher buyer surplus across all potential valuations, resulting in a Pareto improvement over the simple deterministic posted pricing strategy.

Our third contribution is the first systematic comparison of buyer surplus across two different robustness paradigms in mechanism design. While previous research has primarily focused on revenue performance, we demonstrate that RS and RO can lead to significantly different buyer outcomes. We identify specific conditions under which RS surpasses RO in terms of buyer surplus and show that these differences in allocation can also lead to higher revenue when demand is concentrated among low-valuation buyers. This underscores a fundamental trade-off between protection and access: RO emphasizes worst-case revenue, whereas RS focuses on reliable target attainment and broader allocation. To our knowledge, this is the first comparison in the mechanism design literature between RO and RS approaches that highlights buyer surplus.

Finally, we conduct numerical experiments to assess the out-of-sample seller revenue for the optimal mechanisms derived from the two robust approaches. Our analysis indicates that the RS framework is particularly effective when true valuations are positively skewed, concentrating probability mass on lower valuations, or when the ambiguity size is small due to abundant historical data. In contrast, the RO framework performs better in other scenarios. From a managerial perspective, the target-driven RS framework is especially attractive when (i) information ambiguity is minimal, (ii) true valuations are positively skewed, and (iii) equitable access or fairness is a priority. By eliminating the need for ambiguity-set calibration and providing straightforward, target-driven decision rules, the RS framework offers a practical and implementable alternative to traditional robust optimization methods.

The remainder of the paper is organized as follows: Section \ref{sec-literature} reviews the related literature. Section \ref{Sec_Setup} introduces the model and the RS framework. Section \ref{sec-rs-mechanism} characterizes the optimal mechanism and explores the efficacy of simple deterministic posted pricing strategies. Section \ref{sec-RS-vs-RO} compares RS and RO, emphasizing buyer surplus and allocation patterns. Section \ref{sec-conclude} concludes the paper with discussions on future research directions. All proofs are provided in the appendix.

\section{Literature Review}\label{sec-literature}
Our work connects with three key areas of literature: robust mechanism design under demand uncertainty, the analysis of simple versus optimal mechanisms under ambiguity, and the emerging paradigm of robust satisficing. We position our contribution at the intersection of these streams by introducing a target-driven robustness paradigm into mechanism design and exploring its implications for both revenue and allocation outcomes.

First, our work builds on the literature surrounding robust mechanism design where the seller operates with incomplete demand information. This literature typically employs a \emph{maximin revenue} framework, where an adversary selects the worst-case distribution from a predefined ambiguity set, and the seller then determines the optimal selling mechanism to maximize worst-case revenue. In a single-item mechanism design setting (which is also known as the \emph{screening} problem, see \citealt{borgers2015introduction}), \citet{bergemann2011robust} consider the ambiguity set with the Prohorov metric and found deterministic posted pricing optimal. Other notable contributions include \citet{carrasco2018optimal} and \citet{pinar2017robust}, who examine robust revenue maximization with moment information, and \citet{li2019revenue}, who use the Wasserstein metric to define the ambiguity set. \citet{chen2024screening} further explore optimal mechanisms across a broad range of ambiguity sets. This robust mechanism design framework has also been extended to multiproduct or multi-buyer scenarios \citep{Carroll2017, koc2020distributionally, Che2025}. However, this body of work primarily focuses on the robustness of the seller's revenue, providing limited insights into how robustness considerations impact buyer surplus or the distribution of gains.

An alternative stream of research uses the \emph{minimax regret} criterion, as explored by \cite{bergemann2008pricing, bergemann2011robust,caldentey2017intertemporal,anunrojwong2025robustness}, and \cite{koccyiugit2024regret}. In this framework, an adversary first selects the worst-case distribution from a predefined ambiguity set, and the seller chooses the optimal selling mechanism to minimize worst-case regret. This regret measures the difference between the revenue achievable with complete knowledge of the true valuation distribution and the revenue earned without this information. Other research, such as \cite{eren2010monopoly, wang2024minimax,wang2025multi,wang2025power}, and \cite{anunrojwong2023robust}, uses the \emph{maximin competitive ratio} criterion to derive robust selling mechanisms. This approach is similar to minimax regret but measures the revenue achieved under distributional ambiguity relative to the revenue obtained with complete distributional information. \cite{anunrojwong2024best} offers a unified analysis of three key robust criteria across different information structures, assessing how mechanisms optimized for each criterion perform relative to each other. These studies highlight a fundamental prerequisite: the precise calibration of the ambiguity set's size or structure, which is highly sensitive to specification. Our approach diverges from these robust approaches by adopting an RS objective that bypasses ambiguity set specification. Instead, we minimize the system's fragility relative to a revenue target across all distributions, thus avoiding the complex task of set calibration inherent in classical RO approaches.

Second, our analysis relates to research on deterministic posted pricing mechanisms, valued for simplicity and optimality under complete distributional information \citep{riley1981optimal}. Under uncertainty, much work justifies its near-optimality or derives robust price guarantees with limited information, as seen in \citet{cohen2021simple,elmachtoub2021value,chen2023model}. Other contributions include \citet{chen2022distribution}, who derive the closed-form price and performance bound, and \citet{allouah2023optimal} who characterize the optimal pricing with quantile information, and \citet{chen2023robust} who study the robust pricing model with asymmetric valuation distribution. In contrast, we characterize the optimal mechanism under the RS framework and assess deterministic posted pricing, allowing for direct comparison in terms of both revenue and buyer surplus, and quantifying the value of randomization under a satisficing objective.

Third, we contribute to the literature on the RS framework, which aims to minimize a model's fragility in achieving prescribed targets \citep{long2023robust}. RS has been applied in diverse operational contexts, including assortment optimization \citep{jin2022distributionally}, advance scheduling \citep{zhou2022advance}, resource pooling \citep{cui2023target}, supervised learning models \citep{sim2021new}, portfolio optimization \citep{xue2025robust} and prescriptive analytics \citep{Sim2025RS}. Recently, \cite{wang2025equivalence} established connections between the RO and RS frameworks in data-driven robust optimization, such as network lot-sizing and portfolio construction, with the Wasserstein metric. The work most closely related to ours is \cite{rujeerapaiboon2023target}, which uses a RS framework to explore robust monopoly pricing. We extend this research into a mechanism design setting focused on revenue maximization, offering the first comprehensive comparison between RS and RO mechanisms within robust mechanism design. Our study emphasizes the implications for buyer surplus, fairness, and out-of-sample performance.

In summary, our paper integrates the RS paradigm into mechanism design, offering a unified analysis of its implications for both revenue and buyer outcomes. By comparing RS and RO within the same framework, we demonstrate that the choice of robustness paradigm affects allocation patterns and surplus distribution among buyers. Our work not only circumvents the need for ambiguity set specification but also highlights a new dimension of robustness, its impact on fairness and buyer surplus, largely overlooked in existing literature.

{\bf Notation conventions:} In this paper, we use $[N]$ to represent the set $\{1,2,\ldots,N\}$, $x^+ = \max\{x,0\}$ and $|x| = \max\{x,-x\}$ for any real value $x\in\mathbb{R}$. The indicator function $\mathbbm{1}(A)=1$ if event $A$ occurs, and $0$ otherwise. We denote the set of all distributions supported on $[0,1]$ as $\mathcal{P}$, using $P$ for the cumulative distribution function (CDF) and $\bar{P}$ for the complementary cumulative distribution function (CCDF). Additionally, we denote $\bar{P}_{-}(x) = \lim_{v \uparrow x}\bar{P}(v)$ to represent the left-limit, indicating the right-tail probability. The expectation with respect to a CDF $P$ is written as $\mathbb{E}_P[\cdot]$. The terms `decreasing' or `increasing' refer to weaker monotonicity, representing non-increasing or non-decreasing, respectively.

\section{Model Setup: A Distributionally Free Satisficing Framework}\label{Sec_Setup}

Consider the classical selling mechanism design in which a seller sells a single product to a buyer. For clarity, we use masculine pronouns for the buyer and feminine pronouns for the seller. The buyer is risk-neutral and seeks to maximize his expected utility. His valuation, or willingness to pay, for the product is \emph{private} information, represented by a random variable $\tilde{v}\in[0,1]$, drawn from a distribution $P\in \mathcal{P}$, where $P(x)={\mathbb P}(\tilde{v}\le x)$ represents the cumulative distribution function (CDF) and $\mathcal{P}$ denotes the set of all distributions supported on $[0,1]$. We denote a realization of the random variable $\tilde{v}$ as $v$ (without the tilde). 

The seller knows only the support of the buyer's valuation but does not have information about the true distribution $P$. To maximize expected revenue, the seller aims to design a robustly optimal mechanism. According to the revelation principle \citep{myerson1981optimal}, this can be restricted to the following set of direct mechanisms:
\beq
\mathcal{M} = \left\{ (q,m): [0,1] \rightarrow \left[0,1\right] \times \mathbb{R} \bigg| 
\begin{array}{ll}
    q(v)v-m(v)\geq 0,  &\forall v\in[0,1],   \\
    q(v)v-m(v)\geq q(\omega)v-m(\omega), &\forall v,\omega \in [0,1], 
\end{array}
\right\},
\eeq
where $q(\cdot)\in[0,1]$ denotes an allocation rule, and $m(\cdot)\in \mathbb{R}$ denotes a monetary payment rule. In a direct mechanism, the seller allocates the product to the buyer with probability $q(v)$ and requests a payment $m(v)$ based on the buyer's valuation $v$. A feasible mechanism must satisfy two key constraints: \emph{individual rationality (IR)}, ensuring voluntary participation of the buyer, and \emph{incentive compatibility (IC)}, ensuring truthfully reporting of the valuation.

One simple but elegant mechanism is the \emph{posted price (PP) mechanism}, in which the seller offers a take-it-or-leave-it deterministic price $p\in [0,1]$, taking the following form:
\be \label{Prob_PP}
q(v|p) = \mathbbm{1}(v\geq p) \ \text{and} \  m(v|p) = p\mathbbm{1}(v\geq p).
\tag{PP}
\ee
A more general mechanism is the \emph{randomized price (RP)} mechanism, which uses a random allocation mechanism with $q(v)\in[0,1]$. In this approach, the seller selects a probability distribution $Q(\cdot)$ for the randomized price $\tilde{p}\sim Q$. This results in specific allocation and payment rules as follows:
\be \label{Prob_RP}
q(v|Q) = \mathbb{E}_{\tilde{p}\sim Q}[\mathbbm{1}(v\geq \tilde{p})] \ \text{and} \ m(v|Q) =\mathbb{E}_{\tilde{p}\sim Q}[ \tilde{p}\mathbbm{1}(v\geq \tilde{p})].
\tag{RP}
\ee
Both PP and RP mechanisms clearly satisfy the IC and IR constraints. While it is well established that a PP mechanism maximizes revenue when the seller knows the buyer's valuation distribution, the uncertainty surrounding this distribution in practice can lead to substantial revenue losses.

\textbf{Wasserstein Ambiguity Set.} 
In this work, we focus on the \emph{Wasserstein ambiguity set} defined with the Wasserstein distance:
\beq
\mathcal{W}(P_0,r):=\{P\in \mathcal{P}: \; d(P,P_0)\leq r\},
\eeq
where $r\ge 0$ is the \emph{radius} of the Wasserstein balls, indicating the size of the ambiguity set, and $d(P,P_0)$ denotes the type-1 Wasserstein distance between distributions $P$ and $P_0$, defined as \endnote{Note that the type-1 Wasserstein distance has following two equivalent definitions. First, it can be equivalently defined as $d(P,P_0) = \int_0^1 \vert \bar{P}^{-1}(x)-\bar{P}^{-1}_0(x)\vert \mathrm{d}x$, see \cite{abdellatif2025wasserstein} Corollary 3.3 and Remark 3.4. Second, it is equivalent to the classical optimal transportation formulation: $d(P,P_0) = \mathop{\textup{inf}} \limits_{Q \in \mathcal{Q}(P,P_0)} \mathbb{E}_{(\tilde{v},\tilde{u}) \sim Q}[|\tilde{v}-\tilde{u}|]$, where $\mathcal{Q}(P,P_0)$ represents the set of joint distributions with marginals $P$ and $P_0$ (as shown by \citealt{chen2024screening}).}:
\beq
d(P,P_0) : = \int_0^1 \vert \bar{P}(x)-\bar{P}_0(x)\vert \mathrm{d}x,
\eeq
where $\bar{P}(x)$ represents the complementary cumulative distribution function (CCDF). Intuitively, the Wasserstein ambiguity set consists of all probability distributions $P(\cdot)$ that are within a Wasserstein distance $r$ from the \emph{reference distribution} $P_0(\cdot)$, with mean $\mu_0$. For example, in data-driven contexts, the reference distribution can be estimated from historical data, such as an empirical distribution, when available. Conversely, when historical data is unavailable or limited, the reference distribution can be set to a uniform distribution to represent a lack of information.

\textbf{RO Framework.} The literature primarily explores a robust optimization framework, which entails solving the following optimization problem:
\begin{equation}\label{Prob_RO}
    \Pi_{RO}^*(r) = \sup\limits_{(q,m)\in \mathcal{M}} \inf\limits_{P \in \mathcal{W}(P_0,r)}  \mathbb{E}_P[m(\tilde{v})].\tag{OPTIMIZATION}
\end{equation}
This robustly optimal mechanism aims to maximize the worst-case expected revenue within the ambiguity set \citep{carrasco2018optimal, chen2024screening, wang2024minimax}. While the RO framework in \eqref{Prob_RO} effectively addresses distribution ambiguity, it has a notable drawback: it is highly sensitive to the radius parameter $r$, which defines the allowable distance between the true and reference distributions. A small radius offers inadequate protection against ambiguity in the valuation distribution, while a larger radius provides excessive protection, leading to a conservative mechanism. Furthermore, calibrating the radius parameter is often unclear, and the optimal mechanism derived from \eqref{Prob_RO} may perform poorly if the true distribution falls outside the ambiguity set $\mathcal{W}(P_0,r)$. 

\textbf{RS Framework.} To address these challenges, we propose the RS framework introduced by \cite{long2023robust} to develop a distributionally free robust mechanism as follows:
\be
\label{Prob_RS}
\begin{array}{ll}
    \inf\limits_{(q,m)\in \mathcal{M},k > 0} & k\\
    s.t.  & \tau - \mathbb{E}_P[m(\tilde{v})] \leq kd(P,P_0), \quad \forall P \in \mathcal{P},
\end{array} \tag{SATISFICING}
\ee
where the parameter $\tau > 0$ is a pre-determined constant representing the \emph{target} revenue that the decision maker is willing to accept, relative to the reference distribution (such as the empirical distribution). One important difference from the RO framework is the specification of target parameter $\tau$ in \eqref{Prob_RS}, rather than the size of the ambiguity set. The satisficing constraint in \eqref{Prob_RS} must hold when the distribution $P$ matches the reference distribution $P_0$. Therefore, a necessary condition for the feasibility of Problem \eqref{Prob_RS} is $\tau \leq \max_{(q,m)\in \mathcal{M}}\mathbb{E}_{P_0}[m(\tilde{v})]$, ensuring that the expected revenue over the reference distribution meets the target. 

The decision variable $k$ quantifies the \emph{fragility} of underperforming the revenue target by measuring the \emph{worst-case expected revenue loss from the target} across all possible distributions $P\in \mathcal{P}$, normalized by the Wasserstein distance of this distribution from the reference distribution $P_0$. Intuitively, a larger target violation is acceptable when a distribution is more distant from the reference distribution (such as the empirical distribution). When $P$ represents the true valuation distribution, the satisficing constraint implies that $\mathbb{E}_P[m(\tilde{v})]\geq \tau -k d(P,P_0)$, indicating better performance guarantees with smaller values of $k$.

Notably, the framework in \eqref{Prob_RS} offers a distinct modeling philosophy compared to \eqref{Prob_RO}. While RO aims to maximize the worst-case expected revenue, the RS approach seeks to minimize fragility, measured by the worst-case expected revenue loss from the target. Consequently, RO is more appealing to revenue-driven decision-makers, whereas RS is better suited for target-driven decision-makers. Throughout this paper, we use subscripts `$\cdot_{RO}$' and `$\cdot_{RS}$' to denote the RO and RS frameworks, respectively. We use superscripts `$\cdot^{*}$' and `$\cdot^{PP}$' to indicate optimal and PP mechanisms, respectively. For instance, `$\Pi_{RS}^*$' denotes the revenue under the optimal mechanism in a RS framework.


We conclude this section by presenting an equivalent representation of feasible direct mechanisms. According to \cite{myerson1981optimal}, a direct mechanism $(q,m)$ is incentive compatible if and only if (i) $q(v)$ is non-decreasing and (ii) $m(v)=m(0)+q(v)v-\int_0^vq(x)\mathrm{d}x$ for all $v \in [0,1]$. This implies that the payment function $m(\cdot)$ is uniquely defined by the allocation function $q(\cdot)$. By applying integration by parts, we can express the payment function $m$ as $m(v) = m(0)+\int_0^v x \mathrm{d}q(x)$. Individual rationality requires that $0\leq q(0)\cdot 0-m(0)=-m(0)$, leading to $m(0)\le 0$. Therefore, in the optimization problems \eqref{Prob_RO} and \eqref{Prob_RS}, it is optimal to set the net payment for the lowest-valuation buyer to zero, i.e., $m(0) = 0$. Consequently, the set of direct mechanisms can be equivalently expressed as:
\beq
\mathcal{M} = \left\{ (q,m): [0,1] \rightarrow \left[0,1\right] \times \mathbb{R} \bigg| 
\begin{array}{ll}
    q(v) \geq q(\omega),  &\forall v,\omega\in[0,1], v\geq \omega,  \\
    m(v) = \int_0^v x \mathrm{d}q(x), &\forall v \in [0,1], 
\end{array}
\right\}.
\eeq
This indicates that a feasible direct mechanism  $(q,m)$ can be implemented using a randomized pricing mechanism described in \eqref{Prob_RP}, with $q(\cdot)$ serving as the CDF for the randomized prices. 

\section{Robust Satisficing Mechanisms}\label{sec-rs-mechanism}
In this section, we analyze optimal mechanisms within the RS framework. We begin by fully characterizing the optimal RS mechanism by solving \eqref{Prob_RS} in Subsection \ref{sec-rs-opt}. Next, we derive the optimal deterministic posted price in Subsection \ref{sec-rs-pp}. Finally, we assess the effectiveness of the posted pricing mechanism by comparing it to the optimal mechanism in Subsection \ref{sec-rs-pp-effectiveness}.
\subsection{Optimal RS Mechanism}\label{sec-rs-opt}
In this section, we examine the robust revenue satisficing problem \eqref{Prob_RS} to derive the optimal RS mechanism. We begin by analyzing the satisficing constraint, which entails an infinite number of constraints. For any feasible $(q,m) \in \mathcal{M}$ and $k > 0$, the satisficing constraint in \eqref{Prob_RS} can be expressed as $\tau \leq   \mathbb{E}_P[m(\tilde{v})] + kd(P,P_0), \forall P \in \mathcal{P}$, which is equivalent to:
\beq
      \tau \leq \inf_{P \in \mathcal{P}}\left[\mathbb{E}_P\left[m(\tilde{v})\right] + kd(P,P_0)\right] = \inf_{P \in \mathcal{P}}\left[ \mathbb{E}_P\left[ \int_0^{\tilde{v}} x\mathrm{d}q(x)\right]+ kd(P,P_0)\right].
\eeq
Thus, the problem \eqref{Prob_RS} is equivalent to the following optimization:
\be
\begin{array}{ll}
    \inf\limits_{k > 0} & k\\
    s.t.  & \tau \leq \sup_{(q,m)\in \mathcal{M}}\inf_{P \in \mathcal{P}}\left[ \mathbb{E}_P\left[ \int_0^{\tilde{v}} x\mathrm{d}q(x)\right]+ kd(P,P_0)\right],
\end{array} \label{Eq_constraint}
\ee
We demonstrate in the following lemma that, for any given $k > 0$, the maximin problem in the constraint of \eqref{Eq_constraint} can be equivalently represented as the following minimax pricing problem, regularized by the Wasserstein distance.

\begin{prop}\label{Lm_pricing}
For any $k > 0$, we have 
\bea
 \sup_{(q,m)\in \mathcal{M}}\inf_{P \in \mathcal{P}}\left[ \mathbb{E}_P\left[ \int_0^{\tilde{v}} x\mathrm{d}q(x)\right]+ kd(P,P_0)\right]
 & = & \inf_{P \in \mathcal{P}}\left[\max_{x\in[0,1]}\mathcal{R}_{P}(x)+ k d(P,P_0)\right] \label{Prob_pricing} \\
 & = & 
\inf_{r \geq 0} \left\{\inf_{P \in \mathcal{W}(P_0,r)} \max_{x\in [0,1]} \mathcal{R}_{P}(x) + kr \right\}, \label{Eq_robustpricing}
\eea where $\mathcal{R}_{P}(x) :=x\bar{P}_{-}(x)$, with $\bar{P}_{-}(x):={\mathbb P}(\tilde{v}\ge x) = \lim_{v \uparrow x}\bar{P}(v)$, represents the expected selling revenue at a posted price $x$ when the valuation follows distribution $P$.
\end{prop}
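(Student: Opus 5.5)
The plan is to prove the first equality \eqref{Prob_pricing} by a minimax interchange, and then obtain \eqref{Eq_robustpricing} by a simple layering of the infimum over $P$ according to its Wasserstein distance from $P_0$.

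\textbf{Setting up the saddle function.} By the randomized-pricing representation at the end of Section \ref{Sec_Setup}, a mechanism in $\mathcal{M}$ corresponds to a price distribution $Q=q$ on $[0,1]$. Fubini then gives
\[
\mathbb{E}_P[m(\tilde v)]=\int_0^1 x\,{\mathbb P}_P(\tilde v\ge x)\,\mathrm{d}q(x)=\int_0^1 \mathcal{R}_P(x)\,\mathrm{d}q(x).
\]
Define
\[
\Phi(q,P):=\int_0^1 \mathcal{R}_P(x)\,\mathrm{d}q(x)+k\,d(P,P_0).
\]

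\textbf{Properties needed for the interchange.} $\Phi$ is linear in $q$. It is convex in $P$: the revenue term is linear in $P$, and $d(P,P_0)=\int_0^1|\bar P-\bar P_0|\,\mathrm{d}x$ is convex. Both the set of price distributions and $\mathcal{P}$ are convex, and both are weak$^*$-compact, being probability measures on $[0,1]$. I would apply Sion's minimax theorem to obtain
\[
\sup_q\inf_P\Phi=\inf_P\sup_q\Phi.
\]
For fixed $P$, $\sup_q\int\mathcal{R}_P\,\mathrm{d}q=\sup_x\mathcal{R}_P(x)$, attained by a point mass. Moreover, $\mathcal{R}_P(x)=x\bar P_-(x)$ is upper semicontinuous, since $\bar P_-$ is left-continuous and decreasing, so the supremum is a maximum. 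This yields \eqref{Prob_pricing}.

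\textbf{The main obstacle: semicontinuity for Sion.} The hardest step is verifying the topological hypotheses of Sion's theorem. $\Phi$ must be lower semicontinuous in $P$ for fixed $q$, and upper semicontinuous in $q$ for fixed $P$.
\begin{itemize}
\item \emph{Upper semicontinuity in $q$:} this is fine, since $\mathcal{R}_P$ is bounded and u.s.c., so $q\mapsto\int\mathcal{R}_P\,\mathrm{d}q$ is u.s.c. under weak convergence.
\item \emph{Lower semicontinuity in $P$:} here the map $P\mapsto\int {\mathbb P}_P(\tilde v\ge x)\,\mathrm{d}q(x)$ is an integral of $\mathbb{E}_P[h]$ with $h$ u.s.c. (closed-set indicators), which is u.s.c., not l.s.c., in $P$. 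The term $d(P,P_0)$ is continuous under the $W_1$ topology on $[0,1]$.
\end{itemize}
If this fails, the fallback is to use the version of the minimax theorem that requires compactness and semicontinuity on only one side, namely compactness in $q$ with u.s.c. in $q$. Ky Fan/Sion then only needs convexity in $P$, with no topology on $\mathcal{P}$, and that suffices here. An alternative fallback is to prove the easy inequality $\le$ directly, then establish $\ge$ by exhibiting, for a near-optimal $P^*$, a $q$ with $\inf_P\Phi(q,P)\ge\max_x\mathcal{R}_{P^*}(x)+k\,d(P^*,P_0)-\epsilon$, possibly via a discretization of $[0,1]$ to finite-dimensional LP duality and then a limiting argument.

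\textbf{Second equality.} For \eqref{Eq_robustpricing}, partition $\mathcal{P}$ by $r=d(P,P_0)$:
\[
\inf_P\Bigl[\max_x\mathcal{R}_P+k\,d(P,P_0)\Bigr]=\inf_{r\ge0}\inf_{P:\,d(P,P_0)=r}\Bigl[\max_x\mathcal{R}_P+kr\Bigr].
\]
Replacing "$=r$" by "$\le r$" in the inner infimum can only decrease the value, since $k\,d(P,P_0)\le kr$ on the larger set. Conversely, any $P\in\mathcal{W}(P_0,r)$ has
\[
\max_x\mathcal{R}_P+kr\ge\max_x\mathcal{R}_P+k\,d(P,P_0),
\]
which is at least the left-hand side. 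Hence the two expressions coincide.
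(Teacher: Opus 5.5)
Your proposal is correct and follows essentially the paper's route. The paper also interchanges sup and inf via the von Neumann--Fan minimax theorem (Lemma \ref{Lm_minimax}), which needs compactness and upper semicontinuity only on the mechanism side and mere convexity in $P$; this is exactly your fallback, so your lower-semicontinuity concern about Sion never arises. The paper then reduces the inner maximization to a posted price and proves the second equality by the same two-sided inclusion argument; your direct observation that $\sup_q\int_0^1\mathcal{R}_P\,\mathrm{d}q=\max_{x}\mathcal{R}_P(x)$ is a self-contained replacement for the paper's citation of Manelli and Vincent.
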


Proposition \ref{Lm_pricing} significantly simplifies the optimization problem by reducing the decision variables from three functions $(q, m, P)$ to just one function $P$ and one scalar $x\in [0,1]$ in Problem \eqref{Prob_pricing}. This simplification arises from strong duality, leveraging the functional version of von Neumann's minimax theorem. The inner maximization problem in Problem \eqref{Prob_pricing}, specifically $\max_{x\in [0,1]}\mathcal{R}_{P}(x)$, represents a classical \emph{monopoly pricing} problem that seeks to find the optimal deterministic posted price based on a \emph{known} valuation distribution $P$. It reflects the seller's optimal price against the worst-case distribution selected by the adversary.

The minimax problem on the right-hand side of \eqref{Prob_pricing} is still challenging to solve due to the curse of dimensionality, stemming from the infinite number of valuation distributions generated by regularizing the Wasserstein distance between two distributions. In Problem \eqref{Eq_robustpricing}, we tackle this challenge by reformulating it to focus on a single decision variable: the size of the Wasserstein ambiguity set $r$. This simplification leads to an objective function that depends solely on $r$, comprising two terms: the classic \emph{minimax monopoly pricing} problem using posted pricing mechanisms, specifically $\inf_{P \in \mathcal{W}(P_0,r)} \max_{x\in [0,1]} \mathcal{R}_{P}(x)$, with an \emph{unknown} valuation distribution within a Wasserstein ball of size $r$, coupled with a linear fragility term $kr$. 
\cite{chen2024screening} studied the minimax monopoly pricing problem with a Wasserstein ambiguity set and provided a full characterization of its solution (in Theorem 4), which is restated below for the sake of self-completeness. 

\begin{lem} \label{Lm_chen}
{\sf (Minimax Monopoly Pricing with a Wasserstein Ambiguity Set)}
In a Wasserstein ambiguity set of size $r< \mu_0$, the worst-case distribution to the minimax monopoly pricing problem, $\inf_{P \in \mathcal{W}(P_0,r)} \max_{x\in [0,1]} \mathcal{R}_{P}(x)$, is given by:
\beq 
\bar{P}_{RO}^{PP}(v|r) = \min\left\{\bar{P}_0(v),\frac{\Pi_{RO}^{PP}(r)}{v}\right\}, \quad \forall~v \in [0,1],
\eeq
and the corresponding optimal revenue, denoted as $\Pi_{RO}^{PP}(r)$, is the unique solution in $[0,1]$ to:
\bea
d\left(\Pi_{RO}^{PP}(r)\right) = r, \text{ where } d\left(\pi\right) :=\int_0^1 \left(\bar{P}_0(x)-\pi/x\right)^+ \mathrm{d}x. \label{equ-rev-ro-pp}
\eea
\end{lem}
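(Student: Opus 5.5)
The plan is to establish a lower bound and a matching upper bound, with the function $d(\pi)$ from \eqref{equ-rev-ro-pp} as the central object. First I will record the properties of $d$. On $(0,1]$, $d(\pi)=\int_0^1(\bar P_0(x)-\pi/x)^+\mathrm{d}x$ is continuous and nonincreasing, because the integrand is nonincreasing in $\pi$ and dominated by $\bar P_0$. As $\pi\downarrow 0$ it tends to $\int_0^1\bar P_0=\mu_0$, and $d(1)=0$ since $\bar P_0(x)\le 1\le 1/x$. It is strictly decreasing wherever it is positive: if $d(\pi)>0$, then $\bar P_0(x)>\pi/x$ on a set of positive measure, and increasing $\pi$ strictly shrinks the integrand there. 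Hence for $0<r<\mu_0$ the equation $d(\pi)=r$ has a unique solution $\pi^*\in(0,1)$, which is what the statement calls $\Pi_{RO}^{PP}(r)$.

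\textbf{Upper bound (feasibility of the candidate).} Set $\bar P^*(v)=\min\{\bar P_0(v),\pi^*/v\}$. This function is nonincreasing and right-continuous, so it is a valid CCDF on $[0,1]$, placing any leftover mass at $0$. It lies pointwise below $\bar P_0$, so $d(P^*,P_0)=\int_0^1(\bar P_0-\bar P^*)=\int_0^1(\bar P_0(x)-\pi^*/x)^+\mathrm{d}x=r$, and therefore $P^*\in\mathcal W(P_0,r)$. For every price $x$ we have $x\bar P^*_-(x)\le x\cdot\lim_{v\uparrow x}\pi^*/v=\pi^*$, so $\max_x\mathcal R_{P^*}(x)\le\pi^*$. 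This gives $\inf_{P\in\mathcal W(P_0,r)}\max_x\mathcal R_P\le\pi^*$.

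\textbf{Lower bound (the main obstacle).} I will argue by contradiction. Suppose some $P\in\mathcal W(P_0,r)$ satisfies $\max_x x\bar P_-(x)=\pi<\pi^*$. Then $\bar P(x)\le\bar P_-(x)\le\pi/x$ for all $x$. This gives
\[
d(P,P_0)\ge\int_0^1(\bar P_0-\bar P)^+\ge\int_0^1(\bar P_0(x)-\pi/x)^+\mathrm{d}x=d(\pi).
\]
Since $d$ is strictly decreasing where positive and $d(\pi^*)=r>0$, we get $d(\pi)>r$, which contradicts $d(P,P_0)\le r$. The delicate points are the following:
\begin{itemize}
\item passing between $\bar P$ and its left limit $\bar P_-$ in the revenue function;
\item verifying that the maximum over $x$ is attained, or else working with the supremum, which the argument above already handles;
\item showing that the worst-case distribution is \emph{the} minimizer, which requires care only up to null sets.
\end{itemize}

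Combining the two bounds, $\pi^*$ is the optimal value and $P^*$ attains it. This proves the lemma, consistent with Theorem 4 of \cite{chen2024screening}, which we could alternatively cite directly.
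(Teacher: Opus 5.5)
Your proof is correct, and it takes a different route from the paper only in that the paper gives no proof at all. The paper imports the lemma verbatim from Theorem 4 of \cite{chen2024screening}. Later, in the proof of Proposition \ref{Lm_pricing}, it invokes the lemma just for the fact that the worst case lies on the boundary of the Wasserstein ball.

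You instead give a self-contained two-sided argument.
\begin{itemize}
\item For the upper bound, the truncated iso-revenue curve $\min\{\bar P_0(v),\pi^*/v\}$ is a valid CCDF at distance exactly $d(\pi^*)=r$ from $P_0$, and it caps every posted-price revenue at $\pi^*$.
\item For the lower bound, any $P$ with $\sup_x x\bar P_-(x)=\pi<\pi^*$ satisfies $\bar P\le \pi/x$. Hence $d(P,P_0)\ge\int(\bar P_0-\bar P)^{+}\ge d(\pi)>r$, using that $d$ is strictly decreasing wherever it is positive.
\end{itemize}
This is elementary and needs no duality or minimax machinery. It also exhibits directly the boundary property $d(P^*,P_0)=r$ that the paper relies on downstream. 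The citation is shorter, but it leaves the reader to check that the source's conventions (left limits in the revenue function, the CCDF form of the Wasserstein distance) match the ones used here.

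Your three flagged ``delicate points'' need no extra work beyond what you already have.
\begin{itemize}
\item Your lower bound already works with the supremum and with $\bar P\le\bar P_-$, so the left-limit and attainment issues are handled.
\item Uniqueness of the worst case follows from the same inequality chain. If some $P\in\mathcal W(P_0,r)$ attains $\pi^*$, then $r\ge\int(\bar P_0-\bar P)^{+}+\int(\bar P-\bar P_0)^{+}\ge r+\int(\bar P-\bar P_0)^{+}$. This forces $\bar P\le\bar P_0$ and $\bar P=\min\{\bar P_0,\pi^*/x\}$ almost everywhere, hence everywhere on $[0,1]$ by right-continuity. So the worst case is in fact unique.
\end{itemize}

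You correctly restrict to $0<r<\mu_0$. At $r=0$ the statement's uniqueness claim fails literally, because every $\pi\in[\Pi_0,1]$ solves $d(\pi)=0$. Your argument goes through unchanged there if $\pi^*$ is taken to be the smallest root $\Pi_0$.
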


The worst-case distribution for a minimax monopoly pricing problem can be constructed using the reference distribution and an inverse function. Specifically, when the valuation distribution adopts the inverse function, the selling revenue remains constant regardless of the buyers' valuations, expressed as $\mathcal{R}_{P}(v) = \pi$ when $\bar{P}(x) = \pi/x$. Thus, the inverse function $\pi/x$ represents an \emph{iso-revenue} curve that generates the same revenue $\pi$ for any buyer valuation. This iso-revenue curve is then truncated using the reference distribution to ensure that it forms a valid distribution function, i.e., $\bar{P}_{\pi}(x) = \min\left\{\pi/x, \bar{P}_0(x)\right\}$. The Wasserstein distance between this truncated iso-revenue distribution and the reference distribution simplifies to $d\left(\pi\right)$. To minimize revenue, the adversary shifts this iso-revenue curve downward as far as possible within the Wasserstein ball of size $r$. Consequently, the worst-case distribution lies on the boundary of this Wasserstein ball, maintaining a Wasserstein distance $r$ from the reference distribution, i.e., $d(P_{\pi},P_0) (=d(\pi)) = r$, as illustrated in the shaded area of Figure \ref{Fig_illustration} for a uniform reference distribution $P_0$. 

\begin{figure}[!htb]
    \vspace{-0.1in}
    \FIGURE
    {
        \subfigure[~The worst-case distribution $\bar{P}_{RO}^{PP}$]{
            \includegraphics[width=0.45\textwidth]{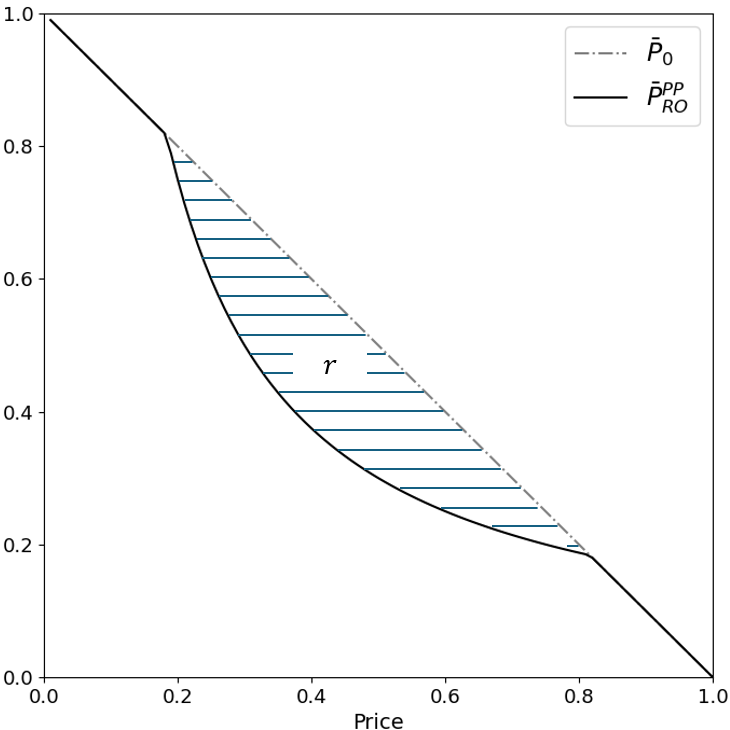}}   
        \subfigure[~Empirical Distribution $\bar{P}_0$ vs $0.25/x$]{
            \includegraphics[width=0.45\textwidth]{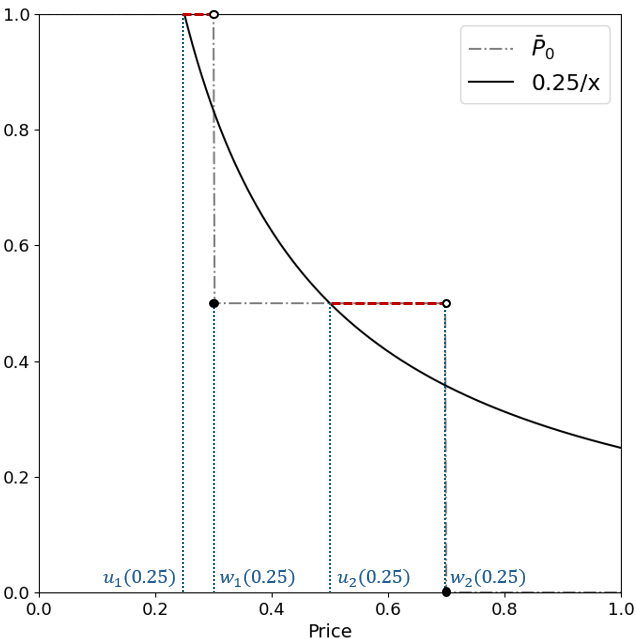}}
    }
    {Illustration of the Worst-case Distribution for the Minimax Monopoly Pricing Problem \label{Fig_illustration}}
    {The shaded area in the left panel represents the Wasserstein distance from the reference distribution. The red-highlighted intervals in the right panel represent disjoint intervals  $\bigcup_{j\in[J]} [u_j, w_j)$ where $\bar{P}_0(x)\ge 0.25/x$.}
    \vspace{-0.2in}
\end{figure}

Notably, the minimax monopoly pricing problem reduces to a classical posted pricing problem when the ambiguity set collapses to a point. Specifically, when the ambiguity size $r=0$, the worst-case distribution aligns with the reference distribution, leading the optimal revenue to match that of the optimal posted pricing mechanism:  $\bar{P}_{RO}^{PP}(v|0) = \bar{P}_0(v)$ and $\Pi_{RO}^{PP}(0)= \max_{x\in [0,1]} \mathcal{R}_{P_0}(x)$. A key observation from Lemma \ref{Lm_chen} is that the Wasserstein distance between the worst-case distribution and the reference distribution is precisely the size of the ambiguity set $r$. This means the worst-case distribution lies on the boundary of the Wasserstein ball  $\mathcal{W}(P_0,r)$. This insight allows us to reformulate the RS Problem \eqref{Eq_constraint} as an optimization problem in which the ambiguity size $r$ -- which can be transformed into the optimal worst-case revenue from the minimax monopoly pricing problem -- serves as the decision variable. This is formally presented in Proposition \ref{Prop_robustpricing}.

\begin{prop}\label{Prop_robustpricing}{\sf (Equivalent Representation)}
The RS Problem \eqref{Prob_RS} is equivalent to:
\be
\begin{array}{ll}
    \inf\limits_{k > 0} & k\\
    s.t.  & \tau \leq \min_{\pi \in [0,\Pi_0]} \left[ \pi + k\cdot d(\pi) \right],
\end{array} \label{Eq_optimizationPi}
\ee
where $\Pi_0:=\max_{x\in [0,1]} \mathcal{R}_{P_0}(x)$ denotes the maximum expected revenue generated under a PP mechanism with the reference distribution, and $d(\pi)$ is given in \eqref{equ-rev-ro-pp}.
\end{prop}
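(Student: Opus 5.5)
Starting from \eqref{Eq_constraint} and Proposition \ref{Lm_pricing}, it suffices to show that for every fixed $k>0$,
\begin{equation*}
\inf_{r \geq 0} \left\{\inf_{P \in \mathcal{W}(P_0,r)} \max_{x\in [0,1]} \mathcal{R}_{P}(x) + kr \right\} = \min_{\pi \in [0,\Pi_0]} \left[ \pi + k\, d(\pi) \right].
\end{equation*}
Once this holds, the two constraint sets over $k$ coincide and the equivalence is immediate. The plan is to change variables from the radius $r$ to the worst-case revenue $\pi=\Pi_{RO}^{PP}(r)$ using Lemma \ref{Lm_chen}.

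First I record the properties of $d(\pi)=\int_0^1(\bar P_0(x)-\pi/x)^+\,dx$ on $[0,\Pi_0]$. It is continuous and nonincreasing, since the integrand is monotone in $\pi$ and dominated convergence applies. Moreover $d(\Pi_0)=0$, because $\pi\ge \Pi_0\ge x\bar P_0(x)$ for all $x$ forces the integrand to vanish. As $\pi\downarrow 0$, $d(\pi)\uparrow \int_0^1 \bar P_0 = \mu_0$. Strict decrease wherever $d>0$ follows because the positive part is active on a set of positive measure there; this is what gives the uniqueness claimed in Lemma \ref{Lm_chen}.

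Write $g(r)=\inf_{P\in\mathcal{W}(P_0,r)}\max_x\mathcal{R}_P(x)$. I split the infimum over $r$ into three ranges.
\begin{itemize}
\item[(i)] For $r\in[0,\mu_0)$, Lemma \ref{Lm_chen} gives $g(r)=\pi$ with $d(\pi)=r$ and $\pi\in(0,\Pi_0]$. Hence $g(r)+kr=\pi+k\,d(\pi)$, and as $r$ ranges over $[0,\mu_0)$, $\pi$ ranges over all of $(0,\Pi_0]$, by continuity and monotonicity of $d$. One subtlety: on flat pieces where $d$ stays at a constant value, several $\pi$ share one $r$. This does not matter, because $\pi+k\,d(\pi)$ is minimized over such a flat piece at its left endpoint, which is the value Lemma \ref{Lm_chen} selects.
\item[(ii)] For $r\ge\mu_0$, the point mass at $0$ lies in the ball, since its distance to $P_0$ is $\int\bar P_0=\mu_0$. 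So $g(r)=0$ and $g(r)+kr\ge k\mu_0$. This value matches $\pi+k\,d(\pi)$ at $\pi=0$, where $d(0)=\mu_0$, and it is attained at $r=\mu_0$.
\item[(iii)] Combining (i) and (ii), the infimum over $r$ equals $\inf_{\pi\in[0,\Pi_0]}[\pi+k\,d(\pi)]$. Continuity of $d$ on the compact interval makes this infimum a minimum.
\end{itemize}

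The main obstacle is making the correspondence $r\leftrightarrow\pi$ airtight. That means checking that $d$ is continuous at $0$ (monotone convergence handles this), that $\pi\mapsto\pi+k\,d(\pi)$ is continuous so that a min exists, and treating the endpoint $r\ge\mu_0$, which Lemma \ref{Lm_chen} excludes, separately as in (ii). I would also note that restricting to $\pi\le\Pi_0$ loses nothing: for $\pi>\Pi_0$ we have $d(\pi)=0$ and $\pi+k\,d(\pi)=\pi>\Pi_0=\Pi_0+k\,d(\Pi_0)$.
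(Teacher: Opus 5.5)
Your proposal is correct and follows the paper's own route: reduce via Proposition~\ref{Lm_pricing}, dispose of radii $r\ge\mu_0$ (where the worst-case revenue is $0$, so the objective is at least $k\mu_0 = 0 + k\,d(0)$), and change variables $r\leftrightarrow\pi$ through the monotone relation $d(\Pi_{RO}^{PP}(r))=r$ of Lemma~\ref{Lm_chen}; your write-up supplies the continuity and endpoint details that the paper's short proof leaves implicit. One small point: your ``flat pieces'' caveat never arises, because your strict-decrease argument combined with $d(\pi)>0$ for every $\pi<\Pi_0$ already makes $d$ a strictly decreasing bijection of $[0,\Pi_0]$ onto $[0,\mu_0]$.
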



The key idea behind Proposition \ref{Prop_robustpricing} is that for any Wasserstein distance $r$, we can solve the robust monopolist pricing problem to identify the worst-case distribution and optimal revenue, as shown in Lemma \ref{Lm_chen}. We demonstrate that the optimal Wasserstein distance must be less than the mean of the reference distribution $\mu_0$, which simplifies the solution process by limiting the search for the Wasserstein distance to the interval $[0,\mu_0]$. Finally, we transform the decision variable in the minimization from Wasserstein distance to worst-case revenue, leveraging their monotonic relationship as described in \eqref{equ-rev-ro-pp} and illustrated in Figure \ref{Fig_fragility_revenue}. This greatly simplifies the original problem \eqref{Prob_RS} by reducing the decision variables from three functions $(q,m,P)$ to a single scalar variable $\pi$ within a finite interval, enabling us to analytically characterize the optimal solution structure.


Next, we derive the solution for the satisficing problem \eqref{Eq_optimizationPi}. To obtain a closed-form expression for the Wasserstein distance $d(\pi)$, we assume there are finitely many disjoint intervals where the reference distribution exceeds the truncated iso-revenue distribution. Specifically, for any $\pi\in[0,\Pi_0]$, there exist a non-negative integer $J(\pi)$ and intervals $\{\left(u_i\left(\pi\right),w_i\left(\pi\right)\right)\}_{i=1}^{J(\pi)}$ such that $0\le u_1(\pi)<w_1(\pi)<u_2(\pi)<w_2(\pi)<\cdots<u_J(\pi)<w_J(\pi)\le  1$. For convenience, we denote $w_0=u_0=0$ and $u_{J+1} =1$. Within these intervals, we have $\bar{P}_0(x)\geq \pi/x$ for all $x\in \bigcup_{j=1}^{J(\pi)} [u_j(\pi), w_j(\pi))$. 
This assumption holds, for instance, when the reference distribution is either an empirical or a uniform distribution, as illustrated in Figure \ref{Fig_illustration}, with $J=1$ for a uniform distribution in panel (a) and $J=2$ for an empirical distribution in panel (b). Later in Proposition \ref{Prop_ContinuousSol}, we show $J(\pi)=1$ for a general class of reference distribution for any $\pi\in[0,\Pi_0]$. 


We will first solve the minimization problem under the constraint of \eqref{Eq_optimizationPi}. Let:
\bea
\rho(\pi,k) :=\pi + k\cdot d(\pi), \  \rho^*(k) := \min_{\pi\in[0,\Pi_0]} \rho(\pi,k), \  \mbox{and} \  \pi^*(k) := \argmin_{\pi\in[0,\Pi_0]} \rho(\pi,k). \label{equ-fragility-adjusted-rev}
\eea Intuitively, $\rho(\pi,k)$ represents the \emph{fragility-adjusted revenue} and $\pi^*(k)$ denotes the worst-case revenue for the minimax monopoly pricing problem given a specific fragility $k$. The following proposition offers a comprehensive characterization of the minimization problem \eqref{equ-fragility-adjusted-rev}.

\begin{thm}\label{Prop_optimalPi}{\sf (Solution to RS Problem \eqref{Eq_optimizationPi})}
For any given fragility $k > 0$, $\rho(\pi,k)$ is a convex function of $\pi$ with a unique minimum $\pi^*(k)\in [0,\Pi_0]$, which is the unique solution to the equation $\sum_{j \in [J(\pi)] } \ln \frac{w_j(\pi)}{u_j(\pi)} = \frac{1}{k}$. The minimum value $\rho^*(k)$ is given by:
\bea
\rho^*(k) = k \cdot \sum_{j\in[J(\pi^*(k))]}\int_{u_j\left(\pi^*(k)\right)}^{w_j\left(\pi^*(k)\right)} \bar{P}_0(x)\mathrm{d}x. \label{equ-min-fragility-revenue}
\eea
Additionally, $\rho^*(k)$ is a monotonically increasing function of $k$ (see Figure \ref{Fig_fragility_revenue}). Therefore, the optimal fragility for \eqref{Eq_optimizationPi} is given by $k_{RS}^* := (\rho^*)^{-1}(\tau)$.
\end{thm}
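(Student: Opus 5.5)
The argument rests on a direct analysis of $d(\pi)=\int_0^1(\bar P_0(x)-\pi/x)^+\mathrm{d}x$ as a function of $\pi$.

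\textbf{Convexity.} For each fixed $x$, the map $\pi\mapsto(\bar P_0(x)-\pi/x)^+$ is the positive part of an affine function. It is therefore convex and nonincreasing in $\pi$. Integrating over $x$, $d(\pi)$ is convex and nonincreasing on $[0,\Pi_0]$. Hence $\rho(\pi,k)=\pi+k\,d(\pi)$ is convex for every $k>0$.

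\textbf{Derivative.} Under the interval assumption, $\{x:\bar P_0(x)\ge \pi/x\}=\bigcup_{j}[u_j(\pi),w_j(\pi))$. Differentiating under the integral (Leibniz; the boundary terms vanish because the integrand is zero at $u_j,w_j$) gives
\[
d'(\pi)=-\sum_{j\in[J(\pi)]}\int_{u_j(\pi)}^{w_j(\pi)}\frac{\mathrm{d}x}{x}=-\sum_{j\in[J(\pi)]}\ln\frac{w_j(\pi)}{u_j(\pi)}.
\]
If differentiability is in doubt, the same expression works as a one-sided derivative of the convex function. Thus $\partial_\pi\rho=1-k\sum_j\ln(w_j/u_j)$.

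\textbf{Unique minimizer.} As $\pi$ increases, the set where $\bar P_0(x)\ge\pi/x$ shrinks, so $L(\pi):=\sum_j\ln(w_j/u_j)$ is decreasing. Its endpoint values are:
\begin{itemize}
\item at $\pi\downarrow0$, $u_1\to0$, so $L\to\infty$;
\item at $\pi=\Pi_0$, the set $\{x\bar P_0(x)\ge \Pi_0\}$ has (generically) zero log-length, so $L(\Pi_0)=0<1/k$.
\end{itemize}
Continuity of $L$, inherited from the continuity of the intersection points, then gives a unique root $\pi^*(k)$ of $L(\pi)=1/k$ in $(0,\Pi_0)$, and this root is the minimizer. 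The main obstacle is establishing \emph{strict} monotonicity of $L$, which is what makes the root unique. I would argue that increasing $\pi$ strictly raises each $u_j$ and lowers each $w_j$ wherever $\pi/x$ crosses $\bar P_0$ transversally. Where the level set is degenerate, I would fall back on convexity: the set of minimizers is an interval, and strict convexity of $\rho$ holds on the region where $L$ is strictly decreasing.

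\textbf{Minimum value.} On each interval,
\[
\int_{u_j}^{w_j}\Big(\bar P_0(x)-\frac{\pi}{x}\Big)\mathrm{d}x=\int_{u_j}^{w_j}\bar P_0(x)\,\mathrm{d}x-\pi\ln\frac{w_j}{u_j}.
\]
Summing over $j$ and substituting the first-order condition $\sum_j\ln(w_j/u_j)=1/k$ gives
\[
\rho^*(k)=\pi^*+k\Big(\sum_j\int_{u_j}^{w_j}\bar P_0(x)\,\mathrm{d}x-\frac{\pi^*}{k}\Big)=k\sum_j\int_{u_j}^{w_j}\bar P_0(x)\,\mathrm{d}x,
\]
which is \eqref{equ-min-fragility-revenue}.

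\textbf{Monotonicity in $k$.} $\rho^*(k)=\min_\pi\rho(\pi,k)$ is a minimum of functions that are nondecreasing (indeed affine with slope $d(\pi)\ge0$) in $k$, so it is nondecreasing. By the envelope theorem, $\frac{\mathrm{d}\rho^*}{\mathrm{d}k}=d(\pi^*(k))>0$, since $\pi^*<\Pi_0$ gives $d(\pi^*)>0$. So $\rho^*$ is strictly increasing, and as a minimum of affine functions it is also concave and continuous.

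\textbf{Optimal fragility.} The constraint $\tau\le\rho^*(k)$ is equivalent to $k\ge(\rho^*)^{-1}(\tau)$. The smallest feasible $k$ is therefore $k^*_{RS}=(\rho^*)^{-1}(\tau)$, provided $\tau$ lies in the range of $\rho^*$. This holds for $\tau<\Pi_0$, since $\rho^*(k)\to0$ as $k\to0$ and $\rho^*(k)\uparrow\Pi_0$ as $k\to\infty$.
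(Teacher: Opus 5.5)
Your proof is correct. For the derivative, the first-order condition and the minimum-value formula it follows the paper: differentiate to get $1-k\sum_j\ln(w_j/u_j)$, then substitute the first-order condition into $\pi^*+k\,d(\pi^*)$.

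You depart from the paper in two places.

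\emph{Convexity.} The paper infers that $\partial_\pi\rho$ is increasing because $u_j(\pi)$ is nondecreasing and $w_j(\pi)$ nonincreasing. You use instead that $\pi\mapsto(\bar P_0(x)-\pi/x)^+$ is convex for each fixed $x$. This needs neither the finite-interval structure nor any regularity of the intersection points.

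\emph{Monotonicity in $k$.} The paper writes $\rho^*(k)=A/B$, with $A=\sum_j\int_{u_j}^{w_j}\bar P_0(x)\,\mathrm{d}x$ and $B=\sum_j\ln(w_j/u_j)$ evaluated at $\pi^*$. It differentiates this ratio in $\pi^*$ using $\bar P_0(u_j)=\pi^*/u_j$, $\bar P_0(w_j)=\pi^*/w_j$ and the signs of $u_j'$, $w_j'$, and then uses that $\pi^*(k)$ increases in $k$. You instead note that $\rho^*$ is a pointwise minimum of affine functions of $k$ with slopes $d(\pi)\ge 0$. Hence $\rho^*(k_2)-\rho^*(k_1)\ge (k_2-k_1)\,d(\pi^*(k_2))>0$ for $k_1<k_2$.

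What your route buys:
\begin{itemize}
\item It is shorter and never differentiates $u_j$ or $w_j$. The paper's quotient derivative should read $B^{-2}\bigl(\frac{w_j'}{w_j}-\frac{u_j'}{u_j}\bigr)(\pi^*B-A)$, so what it needs is $A\ge\pi^*B$, not the stated $A>B$.
\item It yields continuity, concavity and the range $(0,\Pi_0)$ of $\rho^*$. This is exactly what makes $k_{RS}^*=(\rho^*)^{-1}(\tau)$ well defined and attained, which the paper leaves implicit.
\end{itemize}

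What the paper's route makes explicit is that $\pi^*(k)$ increases with $k$. You do not need this, and it also follows in your setup because $L$ is decreasing and $L(\pi^*(k))=1/k$.

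One small precision on your Leibniz step. For a discrete reference, the integrand need not vanish at an endpoint $w_j$ sitting at an atom. The boundary term still vanishes there because such an endpoint is locally constant in $\pi$.

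Your caveat about flat pieces of $x\bar P_0(x)$ is a genuine addition. There $L$ jumps, so $L(\pi)=1/k$ may have no root and only the subgradient condition $0\in\partial_\pi\rho(\pi,k)$ survives. The paper's proof passes over this degenerate case, and the theorem's statement implicitly excludes it.
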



The above theorem provides a complete characterization of the solution to the satisficing problem \eqref{Eq_optimizationPi}. We first establish that fragility-adjusted revenue is convex in worst-case revenue, leading to a unique minimum. We then show that the minimum fragility-adjusted revenue increases monotonically with fragility $k$, as illustrated in Figure \ref{Fig_fragility_revenue}. Consequently, the satisficing problem is trivially achieved at the boundary, where the target level is exactly met, i.e.,  $\rho^*(k_{RS}^*) = \tau$. As discussed above, since $\Pi_0 = \Pi_{RO}^{PP}(0)$, the Wasserstein distance from the truncated iso-revenue distribution at $\Pi_0$ to the reference distribution is zero, i.e., $d(\Pi_0) = 0$ (see \eqref{equ-rev-ro-pp}). This implies $\rho^*(k) \le \rho(\Pi_0,k) = \Pi_0$ (as per \eqref{equ-min-fragility-revenue}). Thus, a necessary condition for the feasibility of $\rho^*(k_{RS}^*) = \tau$ is $\tau \le \Pi_0$.

We are now ready to translate the optimal solution to the RS problem \eqref{Eq_optimizationPi} back into the optimal mechanism for the original RS problem \eqref{Prob_RS}. The optimal RS mechanism for \eqref{Prob_RS} is formally presented in the following Theorem \ref{Thm_optimalmechanism}.

\begin{thm}\label{Thm_optimalmechanism} {\sf (Optimal RS Mechanism)}
The optimal RS mechanism $(q_{RS}^*,m_{RS}^*)$ to Problem \eqref{Prob_RS} is given by: For each $j=0,1,\ldots,J^*$,
\bea
\left(q_{RS}^*\left(v\right),m_{RS}^*\left(v\right)\right) = \left\{\begin{array}{ll}
    k_{RS}^* \cdot\left(\sum_{i\in[j-1]} \ln \left(\frac{w_i^*}{u_i^*}\right) + \ln (\frac{v}{u_j^*}), \; \sum_{i\in[j-1]}(w_i^*-u_i^*) +(v-u_j^*) \right), & v \in [u_j^*,w_j^*)\\
    k_{RS}^* \cdot \left(\sum_{i\in[j]} \ln \left(\frac{w_i^*}{u_i^*}\right),\; \sum_{i\in[j]}(w_i^*-u_i^*) \right), & v \in[w_j^*,u_{j+1}^*),   
\end{array}\right.\label{equ-opt-rs-mechanism}
\eea
where $k_{RS}^* = (\rho^*)^{-1}(\tau)$, $\pi^*=\pi^*(k_{RS}^*)$, $J^* = J(\pi^*)$, $u_j^* = u_j(\pi^*)$, $w_j^* = w_j(\pi^*)$ for all $j\in[J^*]$, and $\left(q_{RS}^*\left(1\right),m_{RS}^*\left(1\right)\right) = \left(1,\; k_{RS}^*\cdot\sum_{i\in[J^*]}(w_i^*-u_i^*) \right)$.
\end{thm}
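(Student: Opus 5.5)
The plan is to show two things: the proposed mechanism $(q_{RS}^*,m_{RS}^*)$ is feasible for \eqref{Prob_RS} with fragility $k_{RS}^*$, and no mechanism can do better. Optimality is immediate from what is already proved. By Proposition \ref{Prop_robustpricing} and Theorem \ref{Prop_optimalPi}, every feasible pair $(q,m,k)$ must satisfy $\tau\le\rho^*(k)$. Since $\rho^*$ is increasing, this forces $k\ge k_{RS}^*$. So it suffices to check feasibility:
\[
\inf_{P\in\mathcal{P}}\bigl[\mathbb{E}_P[m_{RS}^*(\tilde v)]+k_{RS}^*d(P,P_0)\bigr]\ge\tau=\rho^*(k_{RS}^*).
\]

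\textbf{Well-definedness.} First I check that $q_{RS}^*$ is a valid allocation rule. On each $[u_j^*,w_j^*)$ it is $k\ln v$ plus a constant, and it is flat on each gap $[w_j^*,u_{j+1}^*)$, so it is nondecreasing. The first-order condition in Theorem \ref{Prop_optimalPi} is $k\sum_j\ln(w_j^*/u_j^*)=1$, so $q_{RS}^*(v)\le 1$; I set $q(1)=1$. On the increasing pieces $\mathrm{d}q=k\,\mathrm{d}x/x$, hence $x\,\mathrm{d}q=k\,\mathrm{d}x$, and $m(v)=\int_0^v x\,\mathrm{d}q(x)$ gives exactly the stated payment. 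Therefore $(q_{RS}^*,m_{RS}^*)\in\mathcal{M}$. If $w_{J^*}^*<1$, the jump of $q$ at $1$ needs separate handling. I expect $w_J^*=1$ in the relevant case, or that a jump at $v=1$ only adds revenue.

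\textbf{Lower bound.} Write $m'(x)=k\mathbbm{1}(x\in\cup_j[u_j^*,w_j^*))$, possibly plus a point mass at $1$, which is only helpful. Then $\mathbb{E}_P[m(\tilde v)]=\int_0^1 m'(x)\bar P(x)\,\mathrm{d}x$, so
\[
\mathbb{E}_P[m]+k\,d(P,P_0)\ \ge\ k\int_{\cup_j[u_j^*,w_j^*)}\bigl(\bar P(x)+|\bar P(x)-\bar P_0(x)|\bigr)\mathrm{d}x\ \ge\ k\int_{\cup_j[u_j^*,w_j^*)}\bar P_0(x)\,\mathrm{d}x.
\]
Here I drop the nonnegative contribution of $|\bar P-\bar P_0|$ off the intervals and use $\bar P+|\bar P-\bar P_0|\ge\bar P_0$ pointwise. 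By \eqref{equ-min-fragility-revenue}, the right-hand side equals $\rho^*(k_{RS}^*)=\tau$, which proves feasibility.

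\textbf{Main obstacle.} The delicate step is the boundary behaviour, namely verifying $q\le1$ together with monotonicity near $v=1$ and any jump there. I also need to confirm that identity \eqref{equ-min-fragility-revenue} holds at $k_{RS}^*$ with the same intervals $u_j^*,w_j^*$. This step uses the first-order condition from Theorem \ref{Prop_optimalPi}, which makes the contributions $\pi^*(1-k\sum_j\ln(w_j/u_j))$ cancel.
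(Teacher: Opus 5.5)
Your proof is correct, but it takes a different route from the paper's.

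\textbf{The paper's route.} The paper argues by reduction. It states in one line that the RS-optimal mechanism must coincide with the RO-optimal mechanism of \cite{chen2024screening} at the radius $r$ for which $\Pi_{RO}^{PP}(r)=\pi^*$. It imports that mechanism, whose slope is $\alpha^*=1/\sum_i\ln(w_i^*/u_i^*)$, and notes that the first-order condition of Theorem \ref{Prop_optimalPi} gives $\alpha^*=k_{RS}^*$.

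\textbf{Your route.} You certify optimality directly, in two halves.
\begin{enumerate}
\item The lower bound $k\ge k_{RS}^*$ comes from $\tau\le\rho^*(k)$ for every feasible $k$. This needs only the weak-duality half of Proposition \ref{Lm_pricing}: plug $\bar P=\min\{\bar P_0,\pi/x\}$ into the inner infimum, and use that no IC--IR mechanism earns more than the best posted price under any $P$.
\item Achievability comes from the pointwise inequality $\bar P+|\bar P-\bar P_0|\ge\bar P_0$ on $\bigcup_j[u_j^*,w_j^*)$ together with \eqref{equ-min-fragility-revenue}. This shows the proposed mechanism meets $\tau$ at fragility $k_{RS}^*$ against every $P$.
\end{enumerate}

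\textbf{What each buys.} Your argument is self-contained. It relies neither on the external RO characterization nor on the paper's unargued identification of the RS and RO optima. It also makes the saddle point visible: your inequality is tight exactly at $\bar P=\min\{\bar P_0,\pi^*/x\}$. The paper's argument is shorter and places the RS mechanism inside the RO family right away, which is what Theorem \ref{Prop_Connection} then exploits. Both arguments inherit the same standing assumption, namely that the first-order condition of Theorem \ref{Prop_optimalPi} holds with equality.

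\textbf{The jump at $v=1$.} Your concern here is unnecessary. The condition $k_{RS}^*\sum_j\ln(w_j^*/u_j^*)=1$ forces $q_{RS}^*\equiv1$ on $[w_{J^*}^*,1)$, or $q_{RS}^*(v)\to1$ as $v\uparrow1$ when $w_{J^*}^*=1$. So $q_{RS}^*$ is continuous at $1$ and $m_{RS}^*(1)$ is as stated. Your fallback remark, that a jump could only add revenue, would have covered it anyway.
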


Notably, the optimal RS allocation described in the theorem involves offering a continuum of lotteries, each associated with a logarithmic winning probability and a linear price. Specifically, the optimal allocation rule $q_{RS}^*(\cdot)$ for the robust satisficing problem \eqref{Prob_RS} is continuous and non-decreasing, with $q_{RS}^*(0)=0$ and $q_{RS}^*(1) = 1$. The optimal RS mechanism $(q_{RS}^*, m_{RS}^*)$ can be reinterpreted as a randomized pricing mechanism, as described in \eqref{Prob_RP}, where the randomized price $\tilde{p}$ is drawn from the probability distribution $q_{RS}^*\in\mathcal{P}$ with the following density function:
\beq
\frac{\mathrm{d} q_{RS}^*(v)}{\mathrm{d} v} = \frac{\mathrm{d} q_{RS}^*(v)}{\mathrm{d} v} 
   =  \frac{k_{RS}^*}{v} \cdot \mathbbm{1}\left(v\in \bigcup_{j\in[J^*]} [u_j^*,w_j^*)\right),
\eeq where $\mathbbm{1}\left(A\right) = 1$ if event $A$ is true, and $\mathbbm{1}\left(A\right) = 0$ otherwise.
In this context, the allocation probability $q_{RS}^*(\cdot)$ serves as the cumulative distribution function for the randomized price $\tilde{p}$. Consequently, $\mathrm{d} m_{RS}^*(v) / \mathrm{d} v = k_{RS}^* \cdot \mathbbm{1}\left(v\in \bigcup_{j\in[J^*]} [u_j^*,w_j^*)\right)$. Thus, the optimal fragility $k_{RS}^*$ not only reflects the extent to which expected revenue falls short of the target, as discussed after \eqref{Prob_RS}, but also indicates payment sensitivity to buyer valuations.

We conclude this section by providing two examples to illustrate how the worst-case revenue $\pi^*(k)$ and the corresponding fragility-adjusted revenue $\rho^*(k)$, as derived in Theorem \ref{Prop_optimalPi}, along with the optimal RS mechanism $\left(q_{RS}^*\left(v\right),m_{RS}^*\left(v\right)\right)$ described in Theorem \ref{Thm_optimalmechanism}, apply when the reference distribution is a uniform or empirical distribution.

\begin{exap}\label{exap-uniform-RS-opt}
{\sf (Uniform Reference Distribution)} Given a uniform reference distribution $\bar{P}_0(x)=1-x$ for all $x\in [0,1]$, we have $\mu_0 = 1/2$, $\Pi_0 = \max_{p\in[0,1]}p (1-p) = 0.25$, $J=1$, $u_1(\pi) = \frac{1-\sqrt{1-4\pi}}{2}$, and $w_1(\pi)=\frac{1+\sqrt{1-4\pi}}{2}$. Solving the equation $\sum_{j \in [J(\pi)] } \ln \frac{w_j(\pi)}{u_j(\pi)} = \frac{1}{k}$ yields $\pi^*(k) =\frac{1}{4}\left[1-\left(\frac{e^{1/k}-1}{e^{1/k}+1}\right)^2\right]$. Substituting this expression into \eqref{equ-min-fragility-revenue}, we obtain:
\beq
\rho^*(k) = k\int_{u_1\left(\pi^*(k)\right)}^{w_1\left(\pi^*(k)\right)} \bar{P}_0(x)\mathrm{d}x = k\frac{\left(e^{1/k}-1\right)}{2\left(e^{1/k}+1\right)},
\eeq
which can be verified to be a monotonically increasing function of $k$. Solving $\rho^*(k) = \tau$ for each $\tau\in\{0.1,0.2\}$, we derive the optimal fragility and the corresponding optimal RS mechanism, as detailed in Table \ref{table-RS-PP-opt} and illustrated in Figure \ref{Fig_opt}(a).
\end{exap}

\begin{figure}[!htb]
	\vspace{-0.1in}
	\FIGURE
	{
            \subfigure[~Uniform reference distribution]
            {
    		\begin{minipage}{0.45\textwidth}
    				\includegraphics[width=0.96\textwidth]{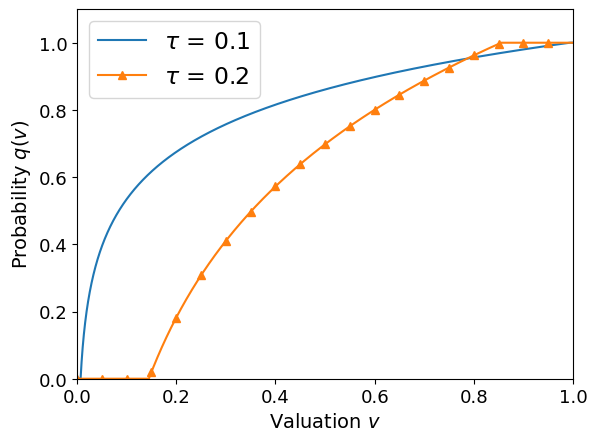}
                \vfill
    				\includegraphics[width=0.96\textwidth]{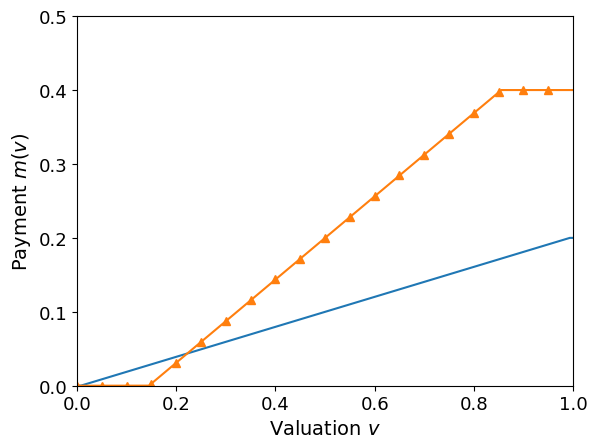}
    		\end{minipage}
            }
            \subfigure[~Empirical reference distribution]
            {
    		\begin{minipage}{0.45\textwidth}
    				\includegraphics[width=0.96\textwidth]{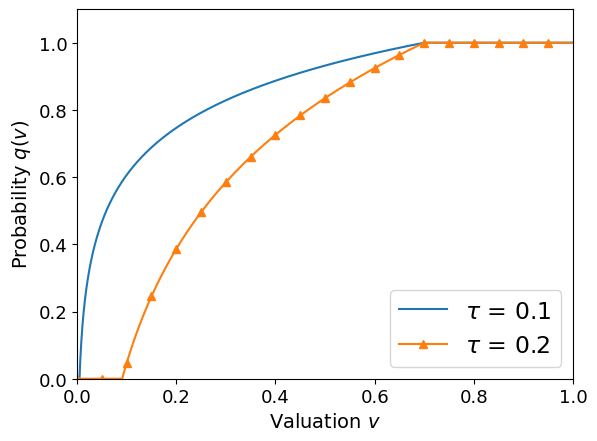}
                \vfill
    				\includegraphics[width=0.96\textwidth]{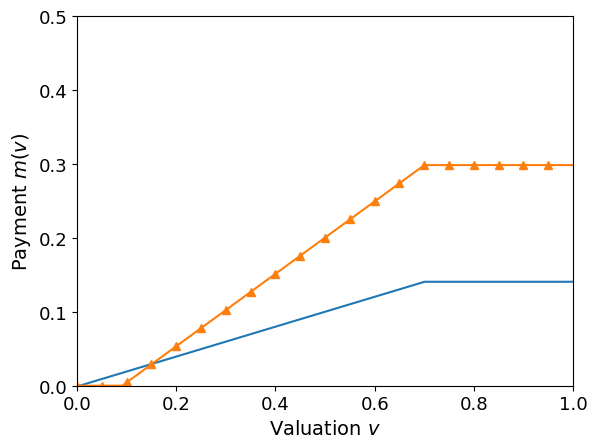}
    		\end{minipage}
            }
	}
	{The Optimal RS Mechanism under Uniform and Empirical Reference Distributions\label{Fig_opt}}
	{}
    \vspace{-0.2in}
\end{figure}


\begin{table}
\vspace{-0.2in}
\TABLE
{Mechanisms under RS Framework: Uniform and Empirical Reference Distributions\label{table-RS-PP-opt}}
{\begin{tabular}{c cccc  ccc}
\toprule
 & \multicolumn{4}{c}{Optimal RS mechanism} & \multicolumn{3}{c}{Optimal PP mechanism} \\
\cmidrule(lr){2-5} \cmidrule(lr){6-8} 
\makecell{target \\ $\tau$} & \makecell{fragility\\ $k_{RS}^*$} & \makecell{revenue\\ $\Pi_{RS}^*$} & \makecell{intervals \\ $\bigcup_{j\in[J^*]} [u_j^*,w_j^*)$ } & \makecell{mechanism \\ $(q_{RS}^*,m_{RS}^*)$} & \makecell{fragility\\ $k_{RS}^{PP}$} & \makecell{revenue\\ $\Pi_{RS}^{PP}$} & \makecell{price \\ $p_{RS}^{PP}$ } \\
\hline
& \multicolumn{7}{l}{Uniform reference distribution} \\ 
0.1 & 0.20 & 0.007 & [0.007,\,0.99] & \multirow{2}{*}{Figure \ref{Fig_opt}(a)}  & 0.33 & 0.04  & 0.2  \\
0.2 & 0.56 & 0.12 & [0.14,\quad 0.86] & & 2 & 0.16  & 0.4 \\ \hline 
& \multicolumn{7}{l}{Empirical reference distribution} \\ 
0.1 & 0.20 & 0.005  & [0.005,\,0.70] & \multirow{2}{*}{Figure \ref{Fig_opt}(b)}  & 0.29 & 0.08 & 0.16\\
0.2 & 0.49 & 0.09  & [0.09,\quad 0.70]& & 1.33 & 0.2 & 0.4 \\
\bottomrule
\end{tabular}}
{}
\vspace{-0.2in}
\end{table}

\begin{exap}\label{exap-empirical-RS-opt}
{\sf (Empirical Reference Distribution)} Given an empirical reference distribution with support $\{0.3,0.7\}$ and equal probability $0.5$ for each value, we have $\mu_0 = \frac{1}{2}$, $\Pi_0 = \max_{p\in [0,1]} p\bar{P}_{0-}(p)=0.35$, and
\beq
\left(J\left(\pi \right), \; \left\{\left(u_i\left(\pi\right),w_i\left(\pi\right)\right)\right\}_{i\in[J(\pi)]} \right)= \left\{\begin{array}{ll}
    \left(1,\;\left\{\left(\pi, 0.7\right)\right\}\right), & \pi \in [0,0.15) \\
    \left(2, \;\left\{\left(\pi, 0.3 \right), \left(2\pi, 0.7\right)\right\}\right), & \pi \in[0.15,0.3)\\
    \left(1,\;\left\{\left(2\pi, 0.7\right)\right\} \right), & \pi \in [0.3,0.35).
\end{array}\right.
\eeq
Solving $\sum_{j \in [J(\pi)] } \ln \frac{w_j(\pi)}{u_j(\pi)} = \frac{1}{k}$ yields:
\beq
\left(\pi^*\left(k\right),\;\rho^*\left(k\right)\right)= \left\{\begin{array}{ll}
    \left(0.7e^{-1/k}, \;k(0.65-1.4e^{-1/k})\right), & k \in (0,\frac{1}{\ln(14/3)}) \\
    \left(\sqrt{0.105e^{-1/k}}, \;k(0.65-2\sqrt{0.105e^{-1/k}})\right), & k \in[\frac{1}{\ln(14/3)},\frac{1}{\ln(7/6)})\\
    \left(0.35e^{-1/k}, \;0.35k(1-e^{-1/k})\right), & k\in [\frac{1}{\ln(7/6)},\infty).
\end{array}\right.
\eeq
It can be verified that $\rho^*\left(k\right)$ is a monotonically increasing function of $k$, with $\lim_{k\rightarrow \infty}\rho^*(k) =0.35$. Solving $\rho^*(k) = \tau$ for each $\tau\in\{0.1,0.2\}$, we derive the optimal fragility and the corresponding RS mechanisms, as detailed in Table \ref{table-RS-PP-opt} and illustrated in Figure \ref{Fig_opt}(b).
\end{exap}


\subsection{Posted Pricing Mechanism}\label{sec-rs-pp}
Although Theorem \ref{Thm_optimalmechanism} explicitly characterizes the optimal mechanism for minimizing the fragility of underperforming a specific revenue target, implementing such a complex mechanism involving a continuum lottery menu is practically challenging \citep{wang2025power}. As a result, simpler mechanisms, like PP mechanisms, are often more appealing to practitioners despite their suboptimal performance \citep{jin2020tight}. In this section, we focus on the widely used deterministic posted pricing mechanism, as described in \eqref{Prob_PP}, aiming to characterize its optimal pricing and assess its effectiveness compared to the optimal mechanism within the RS framework. 


\subsubsection{The Optimal PP Mechanism}\label{sec-opt-PP-rs} 
In this section, we characterize the optimal PP mechanism within the RS framework. By substituting the PP mechanisms, as described in \eqref{Prob_PP}, into problem \eqref{Prob_RS}, we can derive the optimal RS PP mechanism by solving the following: 
\be
\label{Prob_PP_RS}
\begin{array}{ll}
    \inf\limits_{p\in[0,1],k > 0} & k\\
    s.t.  & \tau - \mathbb{E}_P[p\mathbbm{1}(\tilde{v}\geq p)] \leq kd(P,P_0), \quad \forall P \in \mathcal{P},
\end{array} \tag{PP-SATISFICING}
\ee

We begin with a continuous reference distribution, such as a uniform distribution, and present the results in the following proposition.


\begin{prop}\label{Prop_ContinuousPP} {\sf (Equivalent RS Formulation under a Continuous Reference Distribution)}
For a continuous reference distribution, Problem \eqref{Prob_PP_RS} is equivalent to
\be\label{Prob_ContinuousOpt} 
\begin{array}{ll}
\inf_{k > 0} \quad & k\\
s.t. \quad & \tau \leq \max_{p\in[0,1]}\left\{\int_p^{(1+\frac{1}{k})p} k(v-p) \mathrm{d}P_0(v) + \int_{(1+\frac{1}{k})p} ^1 p \mathrm{d}P_0(v)\right\}.
\end{array}
\ee
\end{prop}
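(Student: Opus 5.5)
The plan is to fix a price $p$ and evaluate the adversary's problem in closed form, and then optimize over $p$. For a given $(p,k)$, the constraint of \eqref{Prob_PP_RS} is equivalent to
\[
\tau \le \inf_{P\in\mathcal{P}}\Big[\mathbb{E}_P[f_p(\tilde v)] + k\, d(P,P_0)\Big], \qquad f_p(u) := p\mathbbm{1}(u\ge p).
\]
I will show that this infimum equals $\mathbb{E}_{P_0}[\phi_{p,k}(\tilde v)]$. Here $\phi_{p,k}$ is the inf-convolution
\[
\phi_{p,k}(v) := \inf_{u\in[0,1]}\{f_p(u)+k|u-v|\}.
\]
Since the fragility $k$ is minimized and $p$ is a free decision variable, \eqref{Prob_PP_RS} then becomes $\inf k$ subject to $\tau\le \sup_{p}\mathbb{E}_{P_0}[\phi_{p,k}(\tilde v)]$.

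\textbf{Computing the inf-convolution.} I split into cases on $v$.
\begin{itemize}
\item For $v<p$, take $u=v$, which gives $\phi_{p,k}(v)=0$.
\item For $v\ge p$, either keep $u\ge p$ at cost at least $p$, or move to some $u<p$ at cost $k(v-u)$. The latter has infimum $k(v-p)$, approached as $u\uparrow p$ but not attained.
\end{itemize}
Hence $\phi_{p,k}(v)=\min\{p,\,k(v-p)\}$ for $v\ge p$. This equals $k(v-p)$ exactly when $v\le (1+\frac1k)p$. Integrating against $P_0$ gives
\[
\int_p^{(1+\frac1k)p} k(v-p)\,\mathrm{d}P_0(v) + \int_{(1+\frac1k)p}^1 p\,\mathrm{d}P_0(v),
\]
which is the bracketed expression in \eqref{Prob_ContinuousOpt}. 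Continuity of $P_0$ means possible atoms at $v=p$ or at $v=(1+\frac1k)p$ do not matter, so the integration limits need no care. (If $(1+\frac1k)p>1$, the second integral is simply empty.)

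\textbf{Lower bound.} I will use the optimal-transport form of $d(P,P_0)$ recalled in the endnote. Take any $P$ and any coupling $Q$ of $(\tilde v\sim P,\tilde u\sim P_0)$. Pointwise, $f_p(\tilde v)+k|\tilde v-\tilde u|\ge \phi_{p,k}(\tilde u)$. Taking expectations and then the infimum over couplings yields $\mathbb{E}_P[f_p]+k\,d(P,P_0)\ge \mathbb{E}_{P_0}[\phi_{p,k}]$.

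\textbf{Upper bound.} For $\epsilon>0$, let $P_\epsilon$ be the pushforward of $P_0$ under the map that sends $v\in[p,(1+\frac1k)p)$ to $p-\epsilon$ and leaves all other points fixed. Then $\mathbb{E}_{P_\epsilon}[f_p]$ loses exactly the revenue from the moved mass. The transport cost satisfies $d(P_\epsilon,P_0)\le \int_p^{(1+\frac1k)p}(v-p+\epsilon)\,\mathrm{d}P_0(v)$. Letting $\epsilon\downarrow0$ gives the matching upper bound.

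\textbf{Attainment of the maximum.} It remains to justify writing $\max_p$ rather than $\sup_p$. Since $P_0$ is continuous, the bracketed expression is continuous in $p$ on the compact interval $[0,1]$, so the supremum is attained. Consequently, a feasible $k$ for \eqref{Prob_ContinuousOpt} yields an optimal price $p$ for \eqref{Prob_PP_RS}, and conversely.

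\textbf{Main obstacle.} The delicate step is the non-attainment at the left limit of $p$. Because $f_p$ jumps up at $p$, the adversary's best response requires mass placed strictly below $p$. This is why the proof needs the $\epsilon$-approximation in the upper bound rather than an explicit optimal $P$. The continuity of $P_0$ is used both here and in the attainment step.
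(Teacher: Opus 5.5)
Your proposal is correct and follows the paper's route: you rewrite the constraint as $\tau\le\inf_{P}\{\mathbb{E}_P[p\mathbbm{1}(\tilde v\ge p)]+k\,d(P,P_0)\}$, identify this infimum with $\mathbb{E}_{P_0}[\min\{p,k(\tilde v-p)^+\}]$ via the pointwise inf-convolution, and then take the maximum over $p$. You go further than the paper in two places: you prove the interchange step directly (coupling lower bound plus the $\epsilon$-shifted pushforward), where the paper only invokes a Mohajerin Esfahani--Kuhn-type argument, and you justify attainment of the maximum over $p$, which the paper glosses over; neither step actually needs continuity of $P_0$, because $\min\{p,k(v-p)^+\}$ is jointly continuous in $(p,v)$.
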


The above proposition, similar to Proposition \ref{Prop_robustpricing}, simplifies the satisficing problem by reducing decision variables from a continuous distribution within a Wasserstein ball in \eqref{Prob_PP_RS} to a single scalar posted price in \eqref{Prob_ContinuousOpt}. We establish this by applying a reformulation of the Wasserstein distance. Specifically, the robust satisficing constraint in \eqref{Prob_PP_RS} is equivalent to $\tau \le \max_{p \in [0,1]}\min_{ P \in \mathcal{P}}\left\{ \mathbb{E}_P[p\mathbbm{1}(\tilde{v}\geq p)] + kd(P,P_0) \right\}$. Applying a similar technique as Theorem 4.2 of \cite{mohajerin2018data}, we show that the worst-case distribution can be found within Dirac distributions on $[0,1]$, leading to the following equivalent reformulation of the satisficing constraint:
\beq
\tau \le \max_{p \in [0,1]}\int_{0}^1 \min_{ x\in[0,1] } \left[p\mathbbm{1}(x\geq p) + k |x-v|\right] dP_0(v),
\eeq 
where the Wasserstein distance is reformulated via the classical optimal transportation formulation, as shown by \cite{chen2024screening}, $d(D_x,P_0) = \int_{0}^1|x-v| dP_0(v)$, with $D_x$ denoting the Dirac distribution concentrated at $x$. This is equivalent to the satisficing constraint in \eqref{Prob_ContinuousOpt} because the inner minimization is given by:
\beq
\min_{x \in [0,1]} [p\mathbbm{1}(x \ge p) + k|x - v|] =
\left\{\begin{array}{ll}
0, & \mbox{if $v<p$,} \\
k(v-p), & \mbox{if $v\in[p,\left(1+\frac{1}{k}\right)p)$,} \\
p, &  \mbox{if $v\ge \left(1+\frac{1}{k}\right)p$.}
\end{array}
\right.
\eeq

To derive the optimal posted price for \eqref{Prob_ContinuousOpt}, akin to \eqref{equ-fragility-adjusted-rev}, let
\bea
\begin{array}{l}
\rho^{PP}(p,k) :=\int_{(1+\frac{1}{k})p} ^1 p \mathrm{d}P_0(v) + k\cdot\int_p^{(1+\frac{1}{k})p} (v-p) \mathrm{d}P_0(v) = \mathbb{E}_{P_0}\left[\min\left\{p,k(\tilde{v}-p)^+\right\}\right], \\
p^{PP*}(k) := \argmax_{p\in[0,1]} \rho^{PP}(p,k), \quad \rho^{PP*}(k) := \max_{p\in[0,1]} \rho^{PP}(p,k).
\end{array}\label{equ-fragility-adjusted-rev-pp}
\eea

Similar to $\rho(\pi,k)$ in Theorem \ref{Prop_optimalPi}, $\rho^{PP}(p,k)$ denotes the fragility-adjusted revenue at a posted price $p$, which is concave in $p$ for any given $k$ due to Jensen's inequality. Thus, there exists a unique posted price, $p^{PP*}(k)$, that maximizes this revenue. This maximum, $\max_{p} \rho^{PP}(p,k)$, increases with fragility $k$. 
Consequently, the optimal fragility for \eqref{Prob_ContinuousOpt}, denoted as $k_{RS}^{PP}$, is achieved at the boundary where the target is exactly met. The optimal posted price can then be determined by maximizing the fragility-adjusted revenue at this optimal fragility level. That is,
\bea
p_{RS}^{PP} = p^{PP*}\left(k_{RS}^{PP}\right), \mbox{ where } k_{RS}^{PP} = \left(\rho^{PP*}\right)^{-1}(\tau). \label{equ-opt-RS-price-fragility}
\eea

The following proposition characterizes the optimal posted price for a given fragility under the \emph{regularity} condition of the reference distribution \citep{myerson1981optimal}. Formally, a distribution $F$ is considered regular if its \emph{virtual valuation}, defined as $ x - \frac{1-F(x)}{f(x)}$, is strictly increasing in $x$, where $f$ and $F$ denote the density and cumulative distribution function, respectively. Regularity is ensured when the \emph{hazard rate}, defined as $\frac{f(x)}{1-F(x)}$, is increasing. As noted by \cite{Bagnoli2005} and \cite{Ewerhart2013}, regularity holds for uniform, normal, exponential, logistic, extreme-value, Laplace, Maxwell, and Rayleigh distributions; with certain parameter restrictions, it also applies to power, Weibull, Gamma, Chi-squared, Chi, and Beta distributions.

\begin{prop}\label{Prop_ContinuousSol}{\sf (Optimal Posted Price under RS Framework)}
    If the reference distribution $P_0$ is regular, then for each $c\in[0,\Pi_0]$, there are at most two posted prices, denoted by $u(c)$ and $w(c)$ (with $u(c)\le w(c)$), that yield the same revenue $c$ (i.e., $\mathcal{R}_{P_0}(p)=c$). Furthermore, $\frac{w\left(c\right)}{u\left(c\right)}$ decreases with $c$, which ensures a unique $c^*(k)$ satisfies $\frac{w\left(c^*\left(k\right)\right)}{u\left(c^*\left(k\right)\right)} = \frac{k+1}{k}$, and for any $k>0$, $p^{PP*}(k) = u\left(c^*\left(k\right)\right)$.
\end{prop}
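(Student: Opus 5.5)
The plan is to first derive the level-set structure of $\mathcal{R}_{P_0}$ from regularity, and then obtain $p^{PP*}(k)$ from a first-order analysis of $\rho^{PP}(p,k)$ that reduces directly to the equation $\mathcal{R}_{P_0}(p)=\mathcal{R}_{P_0}((1+\frac1k)p)$.

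\textbf{Step 1: shape of the revenue curve.} Write $\mathcal{R}_{P_0}(p)=p\bar P_0(p)$. Its derivative is $\bar P_0(p)-pf_0(p)=-f_0(p)\bigl(p-\bar P_0(p)/f_0(p)\bigr)$. Regularity means the virtual valuation is strictly increasing, so on the support this derivative is first positive and then negative. Hence $\mathcal{R}_{P_0}$ is strictly increasing on $[0,p_0]$ and strictly decreasing on $[p_0,1]$, where $p_0$ is the monopoly price with $\mathcal{R}_{P_0}(p_0)=\Pi_0$. (Outside the support the curve is $0$ or simply increasing, which does not affect the argument.)

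For each $c\in(0,\Pi_0]$ this gives at most two solutions of $\mathcal{R}_{P_0}(p)=c$: one $u(c)\in(0,p_0]$ and one $w(c)\in[p_0,1]$. As $c$ increases, $u(c)$ strictly increases and $w(c)$ strictly decreases, so $w(c)/u(c)$ strictly decreases. Continuity of $\mathcal{R}_{P_0}$ gives the endpoint behaviour: $u(c)\to0$ and hence $w/u\to\infty$ as $c\downarrow0$, and $w(\Pi_0)/u(\Pi_0)=1$. By the intermediate value theorem and strict monotonicity there is a unique $c^*(k)$ with $w(c^*(k))/u(c^*(k))=(k+1)/k$.

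\textbf{Step 2: derivative of the objective.} Set $a=1+\frac1k$ and write
\[
\rho^{PP}(p,k)=k\int_p^{ap}(v-p)f_0(v)\,\mathrm{d}v+p\,\bar P_0(ap).
\]
Differentiate in $p$ by Leibniz' rule. The boundary term at $ap$ from the integral is $k(a-1)p\,f_0(ap)\,a=p\,af_0(ap)$, and it cancels exactly against the term $-p\,af_0(ap)$ coming from $p\bar P_0(ap)$. This leaves
\[
\partial_p\rho^{PP}(p,k)=\bar P_0(ap)-k\bigl(\bar P_0(p)-\bar P_0(ap)\bigr)=(k+1)\bar P_0(ap)-k\bar P_0(p).
\]
Multiplying by $p/k>0$ and using $ap=(k+1)p/k$, the sign of this derivative equals the sign of $\mathcal{R}_{P_0}(ap)-\mathcal{R}_{P_0}(p)$.

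\textbf{Step 3: single crossing and the maximizer.} By Step 1, $\mathcal{R}_{P_0}(ap)=\mathcal{R}_{P_0}(p)$ with $ap>p$ forces $p=u(c)$ and $ap=w(c)$ for some $c$ with $w/u=a$. By Step 1 this happens only at $p=u(c^*(k))$.

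For small $p$, both $p$ and $ap$ lie in the increasing region, so $\mathcal{R}_{P_0}(ap)>\mathcal{R}_{P_0}(p)$ and the derivative is positive. For $p\ge p_0$, both points lie in the decreasing region (or $ap$ lies beyond the support, where $\bar P_0=0$), so the derivative is negative. Combined with the uniqueness of the crossing, $\partial_p\rho^{PP}$ changes sign exactly once, from $+$ to $-$, at $u(c^*(k))$. Therefore $p^{PP*}(k)=u(c^*(k))$, and this argument does not need the concavity claim.

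\textbf{Main obstacle.} The delicate part is Step 3's sign argument near the edges of the support. The case $ap>1$ must be handled, and so must the case where the support of $P_0$ is a strict subinterval of $[0,1]$, where $\mathcal{R}_{P_0}$ can be flat at $0$. These cases also confirm that "at most two" prices is sharp. I would treat them by restricting to $c\in(0,\Pi_0]$, where $u$ and $w$ are well defined on the support, and by checking directly that the derivative is negative once $ap$ exits the support.
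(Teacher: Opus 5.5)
Your proof is correct and follows the paper's skeleton: regularity makes $\mathcal{R}_{P_0}$ unimodal, and the same Leibniz computation gives $\partial_p\rho^{PP}(p,k)=(k+1)\bar P_0(ap)-k\bar P_0(p)$, whose zero is characterized by $\mathcal{R}_{P_0}(ap)=\mathcal{R}_{P_0}(p)$.

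The difference is in the last step, and your version is the more rigorous one. The paper stops at the first-order condition and identifies its root with $u$. For sufficiency it implicitly relies on the main-text claim that $\rho^{PP}(\cdot,k)$ is concave ``by Jensen's inequality''. It also never argues that $w/u$ is decreasing, nor that $c^*(k)$ exists and is unique.

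Your single-crossing argument supplies all of this. The derivative is positive when $ap\le p_0$ and negative when $p\ge p_0$. It has exactly one zero because $w/u$ is continuous and strictly decreasing from $\infty$ down to $1$. This is the argument that actually works, because the concavity claim is false in general. The map $p\mapsto\min\{p,k(v-p)^+\}$ has a convex kink at $p=v$. For example, with the uniform reference, $\rho^{PP}(p,k)=k(1-p)^2/2$ on $[k/(k+1),1]$, which is convex there. So $\rho^{PP}(\cdot,k)$ is only quasi-concave, which is exactly what your sign argument establishes.

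One small correction to your closing remark. When the support of $P_0$ ends at some $\bar v<1$, the level set $\{\mathcal{R}_{P_0}=0\}$ contains all of $[\bar v,1]$. The ``at most two'' claim therefore genuinely fails at $c=0$. Restricting to $c\in(0,\Pi_0]$ is a correction of the statement, not evidence that the bound is sharp.
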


We begin by illustrating the solution procedure using a uniform reference distribution. A necessary condition for the feasibility of the constraint is $\tau \leq \max_{p\in[0,1]}\mathcal{R}_{P_0}(p) (=\Pi_0) = 1/4$. Below, we demonstrate that this condition is also sufficient. 
The iso-revenue $\mathcal{R}_{P_0}(p)=c$ yields two solutions: $u(c) = \frac{1-\sqrt{1-4c}}{2}$ and $w(c) = \frac{1+\sqrt{1-4c}}{2}$. Setting $\frac{w(c)}{u(c)}=\frac{k+1}{k}$ yields $c^*(k)=\frac{k^2+k}{(2k+1)^2}$, thus: 
$$ p^{PP*}(k) = u\left(c^*\left(k\right)\right) = \frac{k}{2k+1}, \mbox{ and } \rho^{PP*}(k) = \frac{k}{4k+2} \in\left[0,\frac{1}{4}\right],$$ 
which clearly increases with $k$ for any $k > 0$. Therefore, the equation $\rho^{PP*}(k)=\tau$ has a solution if and only if $\tau\le 1/4$, given by (as illustrated in Table \ref{table-RS-PP-opt}):
\be \label{equ-opt-RS-price-fragility-uniform}
k_{RS}^{PP} = \frac{2\tau}{1-4\tau}, \text{ and } p_{RS}^{PP} = 2\tau. \tag{PP-RS-UNIFORM}
\ee


Next, we consider a discrete reference distribution, such as an empirical distribution derived from historical data. Specifically, the reference distribution is a discrete distribution $P_0=\{(\hat{v}_n,\alpha_n)\}_{n\in[N]}$, where the support consists of discrete values $\{\hat{v}_n\}_{n\in [N]}$ with $\hat{v}_1<\hat{v}_2 < \ldots <\hat{v}_N$, and the estimated probability density is $\alpha_i =\mathbb{P}(\tilde{v} = \hat{v}_i)\ge 0$ for each $i\in[N]$ satisfying $\sum_{i\in[N]}\alpha_i=1$. 
In this case, the CDF can be expressed as $P_0(x) = \sum_{i\in[N]}\alpha_i\cdot\mathbbm{1}\left(\hat{v}_i\le x\right)$, representing the probability that a buyer's valuation is no larger than $x$. Additionally, $\bar{P}_{0-}(x) = \sum_{i\in[N]}\alpha_i\cdot\mathbbm{1}\left(\hat{v}_i\ge x\right)$ indicates the probability that a buyer's valuation is no smaller than $x$, which represents the purchasing probability at the posted price $x$. Analogous to Proposition \ref{Prop_ContinuousPP}, the following proposition simplifies the RS problem \eqref{Prob_PP_RS} under a discrete reference distribution into a minimization involving a single scalar variable of the posted price.



\begin{prop}\label{Prop_DiscretePP} {\sf (Equivalent RS Formulation under an Empirical Reference Distribution)}
For an empirical reference distribution $\{(\hat{v}_n,\alpha_n)\}_{n\in[N]}$, Problem \eqref{Prob_PP_RS} is equivalent to
\be\label{Prob_RS2}
\begin{array}{ll}
\min_{k > 0} \quad & k\\
s.t. \quad & \tau \leq \max_{p\in[0,1]} \left\{ \sum_{n\in[N]} \alpha_n \inf_{v \in [0,1]} \left[ p\mathbbm{1}(v\geq p)+k\vert \hat{v}_n-v\vert \right] \right\}.
\end{array}
\ee
\end{prop}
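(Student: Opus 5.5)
The plan is to follow the same route sketched after Proposition \ref{Prop_ContinuousPP}, working directly with the discrete reference distribution.

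\textbf{Step 1: rewrite the constraint.} Fix $k>0$. For a given price $p$, the satisficing constraint in \eqref{Prob_PP_RS} holds if and only if
\beq
\tau\le \inf_{P\in\mathcal{P}}\left\{\mathbb{E}_P[p\mathbbm{1}(\tilde v\ge p)]+k\,d(P,P_0)\right\}.
\eeq
Feasibility of $(p,k)$ for some $p$ is therefore equivalent to $\tau$ being at most the supremum over $p$ of this infimum. So the task reduces to two things: evaluating the inner infimum for a fixed $p$, and showing that the outer supremum over $p\in[0,1]$ is attained.

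\textbf{Step 2: the inner infimum.} Use the optimal transport representation of the type-1 Wasserstein distance from the endnote:
\beq
d(P,P_0)=\inf_{Q\in\mathcal Q(P,P_0)}\mathbb{E}_Q|\tilde v-\tilde u|.
\eeq
The infimum over $P$ then becomes a joint infimum over couplings $Q$ whose second marginal is $P_0$. Writing $Q$ through its conditional laws $Q_n$ given $\tilde u=\hat v_n$, the objective is
\beq
\sum_n\alpha_n\,\mathbb{E}_{Q_n}\left[p\mathbbm{1}(\tilde v\ge p)+k|\tilde v-\hat v_n|\right].
\eeq
Each conditional law can be chosen independently. Each expectation is bounded below by the pointwise infimum over $v\in[0,1]$, and that bound is approached by Dirac conditionals. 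This gives "$\ge$" by pointwise bounding, and "$\le$" by taking $P=\sum_n\alpha_n D_{v_n}$ with $v_n$ near-minimizers, since $d(P,P_0)\le\sum_n\alpha_n|v_n-\hat v_n|$. This is the discrete analogue of Theorem 4.2 of \cite{mohajerin2018data}. No continuity of $P_0$ is needed, which is exactly why this statement keeps the inner $\inf$ explicit rather than the closed form of Proposition \ref{Prop_ContinuousPP}.

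\textbf{Step 3: closed form of the inner term.} As in the display after Proposition \ref{Prop_ContinuousPP}, the inner term equals
\beq
\min\{p,\,k(\hat v_n-p)^+\}.
\eeq
The infimum over $v$ is actually attained: take $v=\hat v_n$ if $\hat v_n<p$, $v=\hat v_n$ again if $k(\hat v_n-p)\ge p$, and $v=p^-$ otherwise, as an infimum approached from the left. The resulting function of $p$, $\sum_n\alpha_n\min\{p,k(\hat v_n-p)^+\}$, is continuous on $[0,1]$. It is a finite sum of continuous piecewise-linear functions, since $p\mapsto\min\{p,k(\hat v_n-p)^+\}$ vanishes at $p=\hat v_n$ and at $p=0$. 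Hence the supremum over the compact set $[0,1]$ is a maximum, which justifies writing $\max_p$ in \eqref{Prob_RS2}.

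\textbf{Step 4: attainment in $k$.} It remains to justify $\min_k$ in place of $\inf_k$. The function $\rho(k):=\max_p\sum_n\alpha_n\min\{p,k(\hat v_n-p)^+\}$ is nondecreasing in $k$. It is also continuous in $k$, being a maximum over a compact set of functions that are jointly continuous. Therefore the feasible set $\{k>0:\rho(k)\ge\tau\}$ is closed in $(0,\infty)$, and it is bounded away from $0$ because $\rho(k)\to0$ as $k\to0$ while $\tau>0$. So the infimum is attained whenever the problem is feasible.

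The main obstacle I expect is Step 2's reverse inequality. There one must handle the lack of attainment at $v=p^-$, since the indicator is discontinuous, and verify that restricting to Dirac conditionals loses nothing. I would handle both with an $\epsilon$-approximation argument.
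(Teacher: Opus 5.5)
Your proposal is correct and follows the paper's own route. Like the paper, you rewrite $d(P,P_0)$ through couplings, split the coupling into independent conditional laws given $\tilde u=\hat v_n$, and reduce each term to the pointwise infimum over $v\in[0,1]$ (approached by Dirac conditionals); your Steps 3--4 also supply the continuity arguments behind writing $\max_p$ and $\min_k$, which the paper leaves implicit. One harmless slip: when $\hat v_n\ge p>0$ and $k(\hat v_n-p)<p$, the inner infimum is only approached as $v\uparrow p$ and is not attained, contrary to your ``actually attained,'' but only its value $\min\{p,k(\hat v_n-p)^+\}$ is used.
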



The fragility-adjusted revenue with a discrete reference distribution can be defined similarly to \eqref{equ-fragility-adjusted-rev-pp} as follows:
\beq
\begin{array}{l}
\rho^{PP}(p,k) := \sum_{n\in[N]} \alpha_n \rho_n^{PP}(p,k), \  \rho_n^{PP}(p,k) := \inf_{v \in [0,1]} \left[ p\mathbbm{1}(v\geq p)+k\vert \hat{v}_n-v\vert \right] = \min\left\{k(\hat{v}_n-p)^+,p\right\}, \\
p^{PP*}(k) := \argmax_{p\in[0,1]} \rho^{PP}(p,k), \  \rho^{PP*}(k) := \max_{p\in[0,1]} \rho^{PP}(p,k).
\end{array}\label{equ-fragility-adjusted-rev-pp-disc}
\eeq 
It is clear that $\rho_n^{PP}(p,k)$ is concave in $p$ for any given $k$ and increases with $k$. Therefore, the fragility-adjusted revenue $\rho^{PP}(p,k)$ is also concave in $p$ and increases in $k$. This implies the existence of a unique posted price, $p^{PP*}(k)$, that maximizes this revenue, with the maximum revenue $\rho^{PP*}(k)$ increasing with fragility $k$ (see Figure \ref{fig-fragility-adj-rev-pp-empirical}). Consequently, the optimal fragility for \eqref{Prob_RS2} is achieved at the boundary where the target is precisely met, allowing the optimal posted price to be determined by maximizing the fragility-adjusted revenue at this optimal fragility level, as shown in \eqref{equ-opt-RS-price-fragility}.


Notably, since $\rho_n^{PP}(p,k) \le p\mathbbm{1}(\hat{v}_n\geq p)$, a necessary condition for the feasibility of the constraint is $\tau \leq \max_{p\in[0,1]} \sum_{n\in[N]} \alpha_n p\mathbbm{1}(\hat{v}_n\geq p) = \max_{p\in[0,1]} \mathcal{R}_{P_0}(p) (=\Pi_0)$, which is expressed as:
\beq
\mathcal{R}_{P_0}(p) = \sum_{n\in[N]} \alpha_n p\mathbbm{1}(\hat{v}_n\geq p) =  \left\{\begin{array}{ll}
    p, \quad &\text{if } p\leq \hat{v}_1\\
    (1-\sum_{j\in[i]}\alpha_j) p, \quad &\text{if } \hat{v}_i < p \leq \hat{v}_{i+1}, i \in [N-1],  \\
    0, \quad &\text{if } p> \hat{v}_N.
\end{array}\right. \\
\Pi_0 = \max_{p\in[0,1]} \mathcal{R}_{P_0}(p)  = \max\left\{\hat{v}_1,(1-\alpha_1)\hat{v}_2,\ldots, (1-\sum_{j\in[N-1]}\alpha_j)\hat{v}_N\right\} = \max_{i\in[N]}\left\{ \left(1-\sum_{j\in[i-1]}\alpha_j\right)\hat{v}_{i}\right\}.
\eeq While equation \eqref{equ-opt-RS-price-fragility} can be solved numerically for a general empirical reference distribution, obtaining an analytical solution is more challenging due to the complexities in explicitly characterizing $\rho^{PP*}(k)$. In the following proposition, we analytically derive the optimal posted price and its corresponding fragility using a two-point empirical distribution.


\begin{figure}[!ht]
    \vspace{-0.1in}
    \centering
    \caption{The Optimal Posted Price and Fragility under an Empirical Reference Distribution $\{(0.3,0.5),(\hat{v}_2,0.5)\}$ 
    }
    \label{Fig_Twosample}
    \includegraphics[width=0.45\columnwidth]{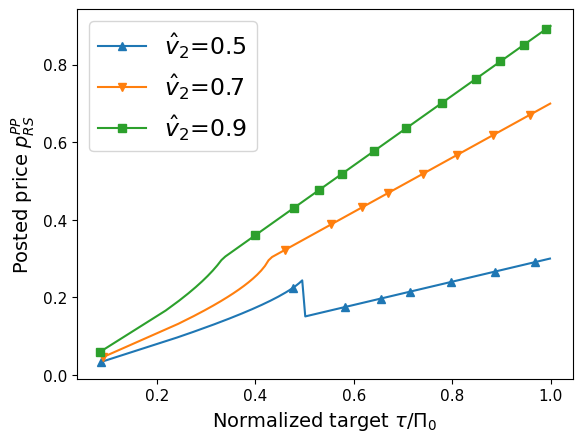}
    \includegraphics[width=0.45\columnwidth]{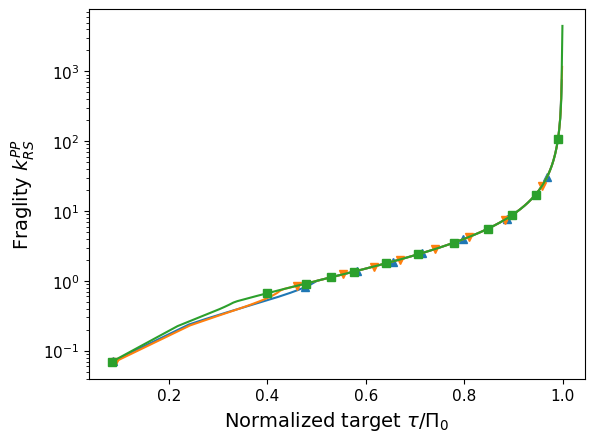}
\vspace{-0.2in}
\end{figure}


\begin{prop}\label{Prop_DiscreteSol} {\sf (Optimal RS-PP Mechanism under an Empirical Reference Distribution)} For a two-point empirical reference distribution $\{(\hat{v}_i,\alpha_i)\}_{i\in[2]}$, $\mu_0=\alpha_1\hat{v}_1 + \alpha_2\hat{v}_2$, $\Pi_0 = \max(\hat{v}_1, (1-\alpha_1)\hat{v}_2)$ and satisficing Problem \eqref{Prob_RS2} is feasible if and only if $\tau \leq \Pi_0 $, with the optimal solution given by:
\begin{enumerate}
    \item[1)] If $\hat{v}_1 \leq (1-\alpha_1)\hat{v}_2$, then $\Pi_0 =(1-\alpha_1)\hat{v}_2$, and
    \beq
    p_{RS}^{PP} = \frac{k_{RS}^{PP}}{k_{RS}^{PP}+1}\hat{v}_2, \text{ and } k_{RS}^{PP} = \left\{\begin{array}{ll}
        \frac{\mu_0 - \tau - \sqrt{(\mu_0  - \tau)^2-4\alpha_1\tau(\hat{v}_2-\hat{v}_1)}}{2\alpha_1(\hat{v}_2-\hat{v}_1)}, \quad &\text{if } \tau \leq  (1-\alpha_1)\hat{v}_1\\
        \frac{\tau}{(1-\alpha_1)\hat{v}_2-\tau}, \quad &\text{if } (1-\alpha_1)\hat{v}_1 < \tau \leq (1-\alpha_1)\hat{v}_2; 
    \end{array}\right.
    \eeq
    \item[2)] Otherwise, if $\hat{v}_1 > (1-\alpha_1)\hat{v}_2$, then $\Pi_0 =\hat{v}_1$, and
    \beq
    \left(p_{RS}^{PP}, \; k_{RS}^{PP}\right) = \left\{\begin{array}{ll}
        \left(\frac{k_{RS}^{PP}}{k_{RS}^{PP}+1}\hat{v}_2, \; \frac{\mu_0 - \tau - \sqrt{(\mu_0  - \tau)^2-4\alpha_1\tau(\hat{v}_2-\hat{v}_1)}}{2\alpha_1(\hat{v}_2-\hat{v}_1)}\right), \quad &\text{if } \tau \leq (1-\alpha_1)\hat{v}_1\\
        \left(\frac{k_{RS}^{PP}}{k_{RS}^{PP}+1}\hat{v}_1, \; \frac{\tau}{\hat{v}_1-\tau})\right), \quad &\text{if } (1-\alpha_1)\hat{v}_1 < \tau \leq \hat{v}_1. 
    \end{array}\right.
    \eeq
\end{enumerate}
\end{prop}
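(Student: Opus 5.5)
Starting from Proposition \ref{Prop_DiscretePP} and the monotonicity facts stated after it, we have
\[
\rho^{PP}(p,k)=\alpha_1\min\{k(\hat v_1-p)^+,p\}+\alpha_2\min\{k(\hat v_2-p)^+,p\},
\]
which is concave and piecewise linear in $p$. The optimal fragility is the root of $\rho^{PP*}(k)=\tau$, and the optimal price is $p^{PP*}(k)$ evaluated at that root. So the plan has two steps: first compute $\rho^{PP*}(k)$ in closed form by locating the maximizer of a concave piecewise-linear function, then invert it.

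\textbf{Breakpoints.} Each term $\rho_n^{PP}(\cdot,k)$ increases with slope $1$ on $[0,\frac{k}{k+1}\hat v_n]$, decreases with slope $-k$ on $[\frac{k}{k+1}\hat v_n,\hat v_n]$, and is $0$ afterwards. Because $\rho^{PP}$ is concave and piecewise linear, its maximum sits at a breakpoint where the slope changes sign. The only candidates are $a=\frac{k}{k+1}\hat v_1$ and $b=\frac{k}{k+1}\hat v_2$.

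\textbf{Case $b\ge \hat v_1$, i.e.\ $k\hat v_2\ge (k+1)\hat v_1$.} On $(a,\hat v_1)$ the slope is $\alpha_2-\alpha_1k$. On $(\hat v_1,b)$ it is $\alpha_2>0$. The function is therefore increasing up to $b$, or else it peaks at $a$. I will compare $\rho^{PP}(b,k)=\alpha_2\frac{k}{k+1}\hat v_2$ with $\rho^{PP}(a,k)=\frac{k}{k+1}\hat v_1$ (the latter holds since $a<b$ gives the full value $a$ on the second term). The larger of the two is $\frac{k}{k+1}\Pi_0$.

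\textbf{Case $b<\hat v_1$.} On $(a,b)$ the slope is $\alpha_2-\alpha_1 k$, so the maximizer is $b$ when $k\le\alpha_2/\alpha_1$ and $a$ otherwise. At $b$,
\[
\rho^{PP}(b,k)=\alpha_1k(\hat v_1-b)+\alpha_2 b=\frac{k}{k+1}\bigl(\mu_0+k\alpha_1\hat v_1-\alpha_1\hat v_2\bigr),
\]
after simplification. This expression is the one that produces the quadratic.

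\textbf{Inversion.} Setting $\frac{k}{k+1}\Pi_0=\tau$ gives $k=\tau/(\Pi_0-\tau)$. Setting the expression at $b$ equal to $\tau$ gives
\[
\alpha_1(\hat v_2-\hat v_1)k^2-(\mu_0-\tau)k+\tau=0.
\]
Its smaller root is the stated formula. I expect the smaller root to be correct because $\rho^{PP*}$ is increasing and the branch starts at $k=0$. For the price, $p=b$ gives $\frac{k}{k+1}\hat v_2$ and $p=a$ gives $\frac{k}{k+1}\hat v_1$, matching the statement.

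\textbf{Main obstacle.} The hardest part is the bookkeeping that matches the $k$-regimes to the stated $\tau$-thresholds. I need to check the following.
\begin{itemize}
\item[(i)] The regime boundary $b=\hat v_1$, i.e.\ $k=\hat v_1/(\hat v_2-\hat v_1)$, maps to $\tau=(1-\alpha_1)\hat v_1=\alpha_2\hat v_1$. At that $k$, $\frac{k}{k+1}=\hat v_1/\hat v_2$, so $\rho^{PP}(b)=\alpha_2\hat v_1$, which matches.
\item[(ii)] The comparison threshold $k=\alpha_2/\alpha_1$ between the $a$- and $b$-branches in the second case must be reconciled with the comparison of $a$ and $b$ values in the first case. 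I will attempt this by showing that in subcase 1 ($\hat v_1\le\alpha_2\hat v_2$) point $b$ always wins. In subcase 2, $a$ wins once $\tau>\alpha_2\hat v_1$. There the statement asserts $b$ is optimal below the threshold. This requires verifying that $\alpha_2\le\alpha_1 k$ cannot occur in the $b<\hat v_1$ regime with $\tau\le\alpha_2\hat v_1$, or else that the values tie. I will check this directly, possibly by a continuity/monotonicity argument in $k$.
\end{itemize}

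\textbf{Feasibility.} This follows because $\rho^{PP*}(k)\uparrow\Pi_0$ as $k\to\infty$, since $\frac{k}{k+1}\Pi_0\to\Pi_0$.
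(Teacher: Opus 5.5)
Your plan follows the paper's proof. Reduce to the two candidate prices $a=\frac{k}{k+1}\hat v_1$ and $b=\frac{k}{k+1}\hat v_2$. Compare $\rho^{PP}(a,k)=\frac{k}{k+1}\hat v_1$ with $\rho^{PP}(b,k)=\alpha_2 b+\alpha_1 k(\hat v_1-b)^+$ across the regimes separated by $k=\hat v_1/(\hat v_2-\hat v_1)$ and $k=\alpha_2/\alpha_1$. Then invert the increasing map $k\mapsto\rho^{PP*}(k)$. Your conclusions are right. The choice of the smaller root is justified cleanly by the identity $\alpha_1(\hat v_2-\hat v_1)k^2-(\mu_0-\tau)k+\tau=(k+1)\bigl(\tau-\rho^{PP}(b,k)\bigr)$, whose right side is positive at $k=0$. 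A few steps still need repair.

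\textbf{Concavity.} The claim that $\rho^{PP}(\cdot,k)$ is concave is false; the paper's text asserts it too. By your own slope description, each $\rho^{PP}_n$ has slope $-k$ just left of $\hat v_n$ and slope $0$ just right of it, which is a convex kink. Take $\hat v=(0.3,0.7)$, $\alpha_1=\alpha_2=1/2$, $k=2$. The function rises to $\rho^{PP}(a,k)=0.2$, falls to $0.15$ at $\hat v_1$, then rises to $\rho^{PP}(b,k)\approx 0.233$. So stopping at the first downturn would give the wrong price. The candidate set $\{a,b\}$ is still correct, for a different reason: a continuous piecewise-linear function vanishing at $p=0$ and $p=1$ peaks at a kink where the slope drops, and only $a$ and $b$ are such kinks. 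Your explicit comparison of the two values is what actually settles the case $b\ge\hat v_1$, so keep it and drop concavity.

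\textbf{Value at $b$.} The simplified value is $\frac{k}{k+1}\bigl(\mu_0-k\alpha_1(\hat v_2-\hat v_1)\bigr)$. Your display drops the factor $k$ on $\alpha_1\hat v_2$, although your quadratic matches the corrected form.

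\textbf{Item (ii).} This closes at once, since $\hat v_1/(\hat v_2-\hat v_1)\le\alpha_2/\alpha_1$ iff $\hat v_1\le\alpha_2\hat v_2$. In subcase 1, every $k$ with $b<\hat v_1$ therefore has $\alpha_1 k<\alpha_2$, so the $b$-branch always wins. In subcase 2, the switch occurs at $k=\alpha_2/\alpha_1$, before $b$ reaches $\hat v_1$, and there $\rho^{PP}(a,k)=\rho^{PP}(b,k)=\alpha_2\hat v_1$. Monotonicity of $\rho^{PP*}$ then gives $k^*\le\alpha_2/\alpha_1$ iff $\tau\le\alpha_2\hat v_1$. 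So in subcase 2 the threshold $\tau=\alpha_2\hat v_1$ comes from this tie, not from your check (i). Indeed, in subcase 2, at $b=\hat v_1$ one has $\rho^{PP*}=\hat v_1^2/\hat v_2>\alpha_2\hat v_1$.

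\textbf{Feasibility endpoint.} Since $\rho^{PP*}(k)<\Pi_0$ for every finite $k$, your limit argument gives feasibility exactly for $\tau<\Pi_0$. The endpoint $\tau=\Pi_0$ allowed in the statement is not attained; the formula $\tau/(\Pi_0-\tau)$ is undefined there.
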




Next, we illustrate the above proposition using the empirical reference distribution $\{(0.3,0.5),(0.7,0.5)\}$ examined in Example \ref{exap-empirical-RS-opt}. In this context, based on \eqref{equ-fragility-adjusted-rev-pp}, we have:
\beq
p^{PP*}(k)=0.7\frac{k}{1+k} \mbox{ and } \rho^{PP*}\left(k\right)= \left\{\begin{array}{ll}
    0.15k+0.35\frac{k(1-k)}{1+k}, & k \in (0, 0.75] \\
    0.35\frac{k}{1+k}, & k \in(0.75,\infty)
\end{array}\right.,
\eeq as illustrated in Figure \ref{fig-fragility-adj-rev-pp-empirical}. Solving $\rho^{PP*}(k) = \tau$ for each $\tau\in\{0.1,0.2\}$, we derive the optimal posted price and its corresponding fragility, as shown in Table \ref{table-RS-PP-opt}. Additionally, we illustrate the roles of the revenue target $\tau$ on the optimal posted price and fragility in Figure \ref{Fig_Twosample}, which features a fixed $\hat{v}_1 = 0.3$ and varying $\hat{v}_2$. Consistent with our theoretical prediction, these numerical results reveal that feasible targets depend on the empirical distribution through the maximum revenue achievable from a deterministic posted pricing mechanism, i.e., $\Pi_0 = \max(\hat{v}_1, 0.5\hat{v}_2)$. Second, the optimal fragility of a PP mechanism $k_{RS}^{PP}$ increases monotonically with the revenue target $\tau$, which aligns with the intuition that an ambitious revenue target leads to a higher fragility. Third, the optimal posted price $p_{RS}^{PP}$ may fail to be monotonic in the revenue target $\tau$ when the differentiation between valuations is relatively small. This is because the seller switches to a lower price to achieve the revenue target when this target is becoming more aggressive. For example, when $\hat{v}_1>0.5\hat{v}_2$ or $\hat{v}_2<0.6$, the seller switches to a lower price as the revenue target increases and crosses the threshold $0.15$ (i.e., when $\tau>(1-\alpha_1)\hat{v}_1)$). Interestingly, the optimal deterministic posted price is linear in the revenue target when this target is large, as illustrated in Figure \ref{Fig_Twosample}. This numerical observation aligns with Proposition \ref{Prop_DiscreteSol}, which implies that, when the revenue target is large ($\tau> (1-\alpha_1)\hat{v}_1$), the optimal price equals to $\frac{1}{1-\alpha_1}\tau$ if valuation differentiation is larger (i.e., $\hat{v}_1\le (1-\alpha_1)\hat{v}_2$) or $\tau$ if valuation differentiation is small. This suggests that, to minimize worst-case fragility, an ambitious seller can set the price as a simple linear function of the chosen revenue target.


\begin{figure}[!htb]
	\vspace{-0.1in}
	\FIGURE
	{
            \subfigure[~Low Valuation ($v=0.25$)]
            {
    		\begin{minipage}{0.33\textwidth}
    				\includegraphics[width=0.96\textwidth]{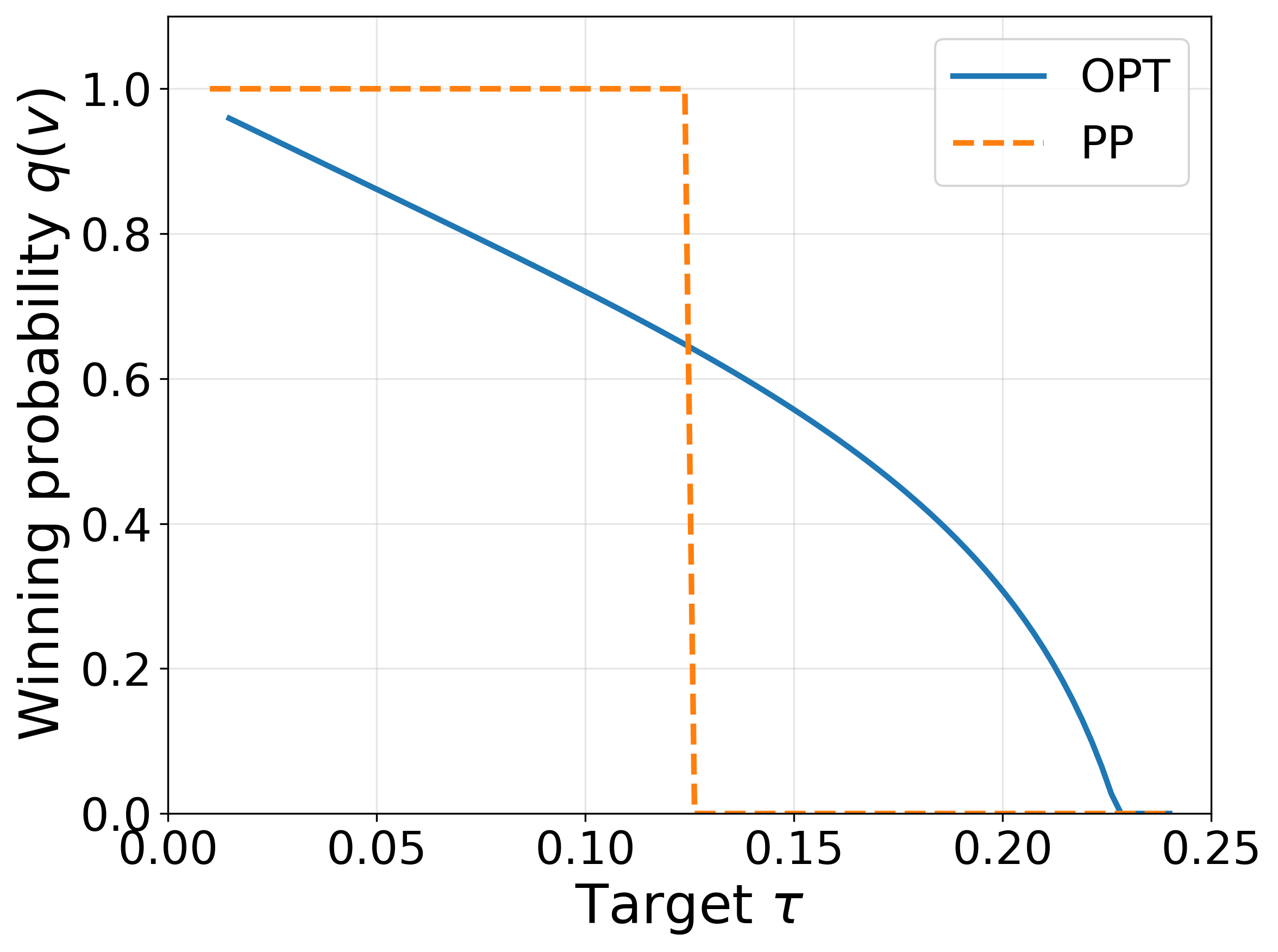}
                \vfill
    				\includegraphics[width=0.96\textwidth]{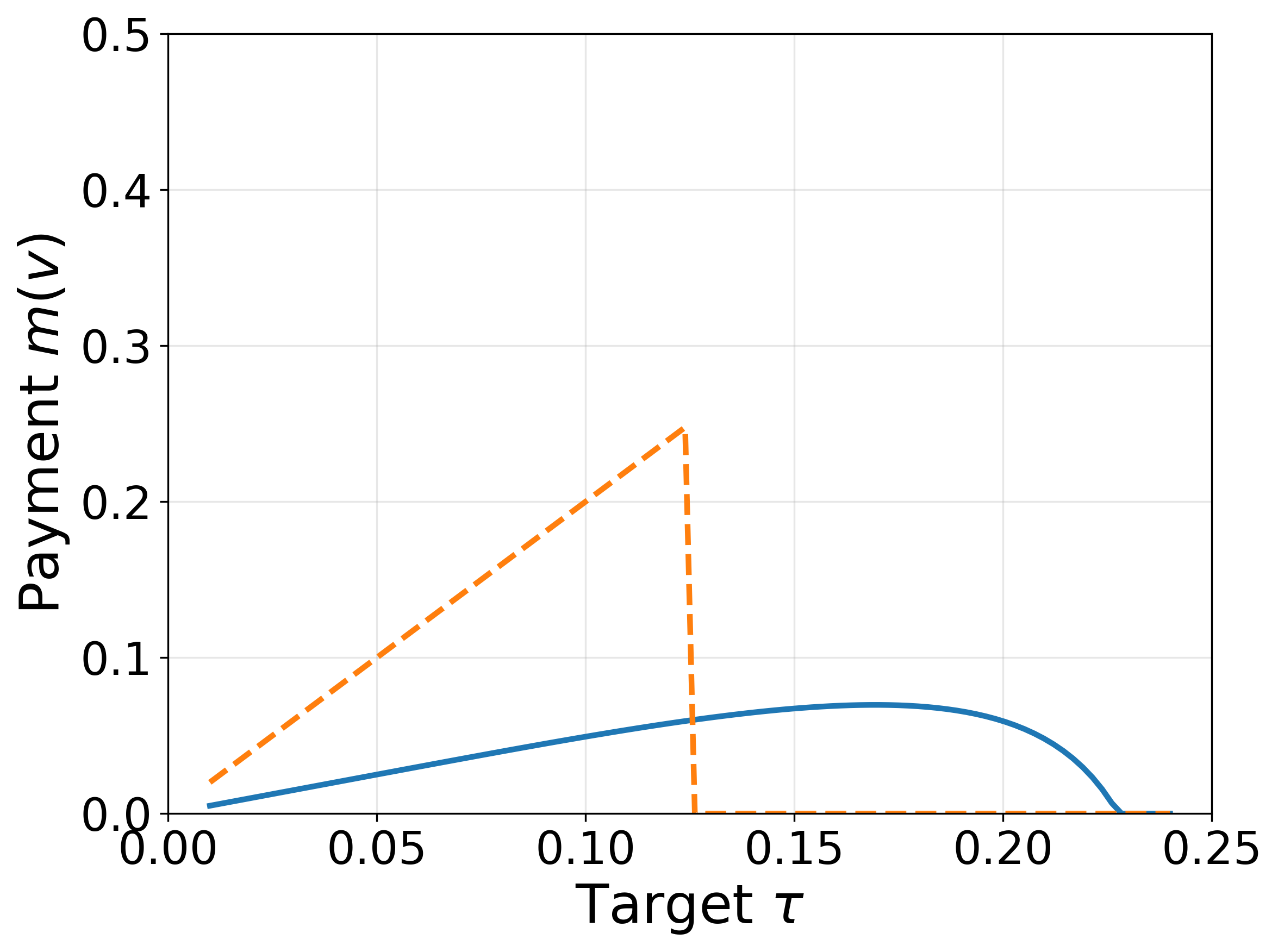}
                \vfill
                    \includegraphics[width=0.96\textwidth]{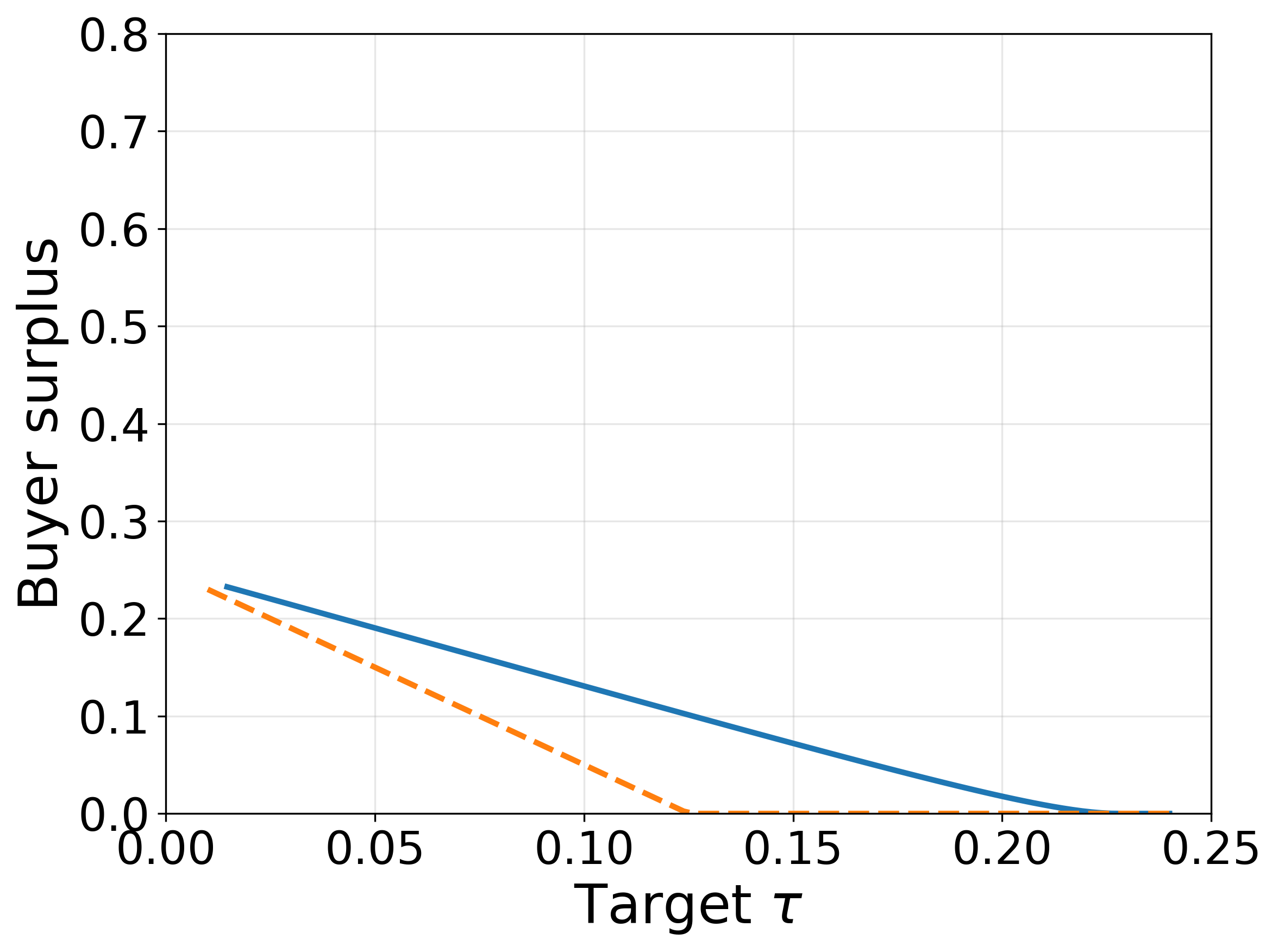}
    		\end{minipage}
            }
            \subfigure[~Medium Valuation ($v=0.5$)]
            {
    		\begin{minipage}{0.33\textwidth}
    				\includegraphics[width=0.96\textwidth]{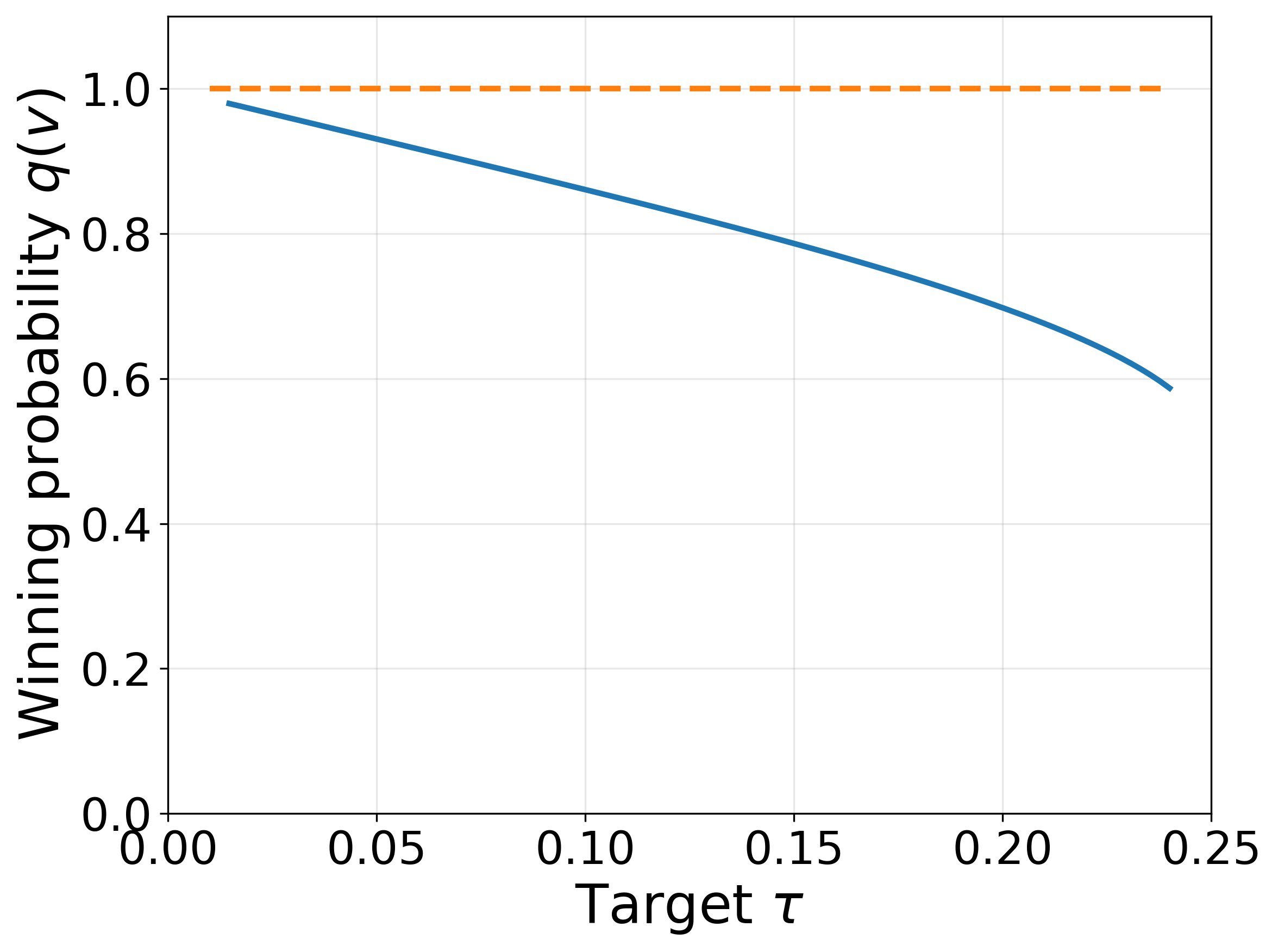}
                \vfill
    				\includegraphics[width=0.96\textwidth]{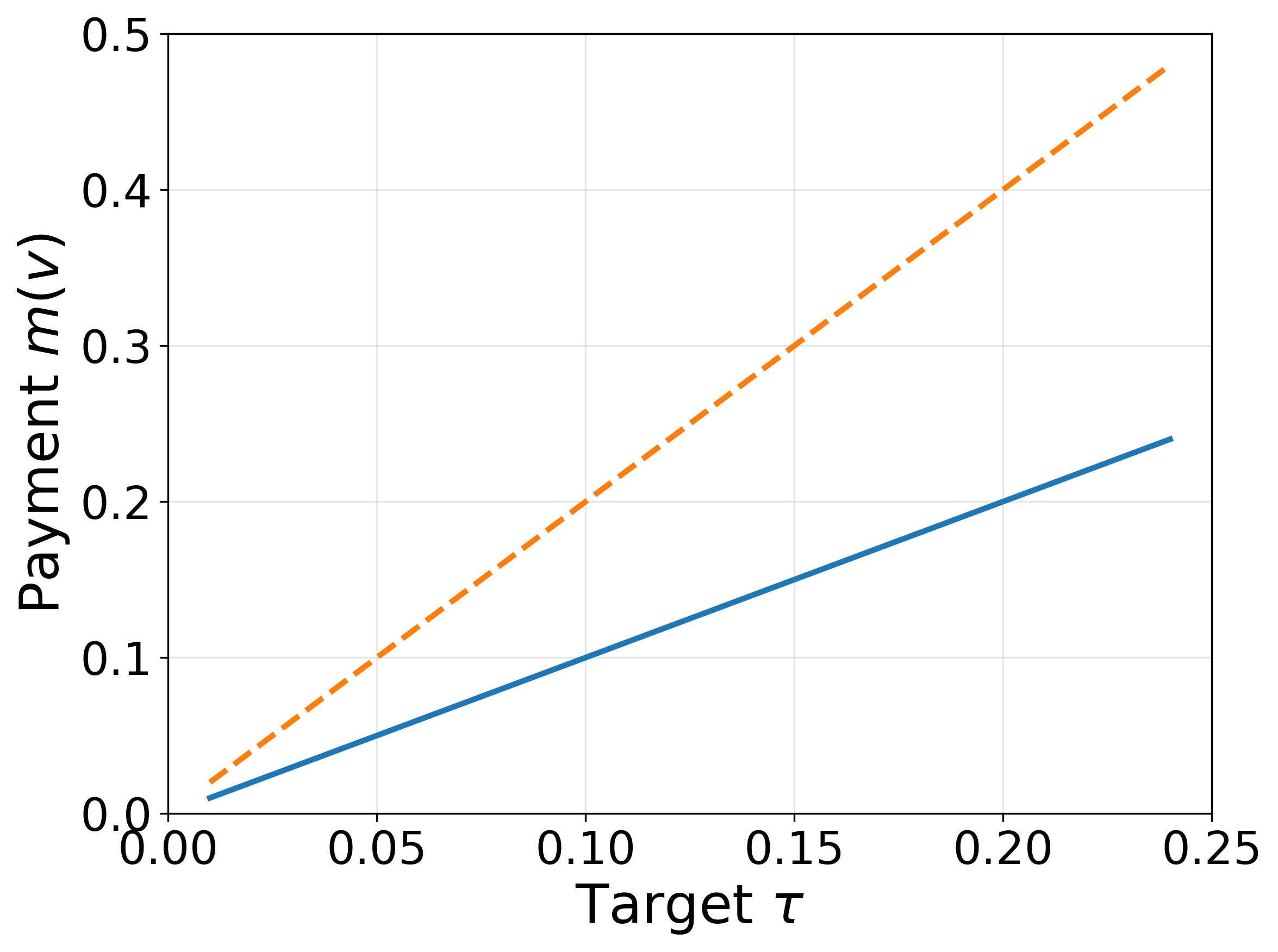}
                \vfill
                    \includegraphics[width=0.96\textwidth]{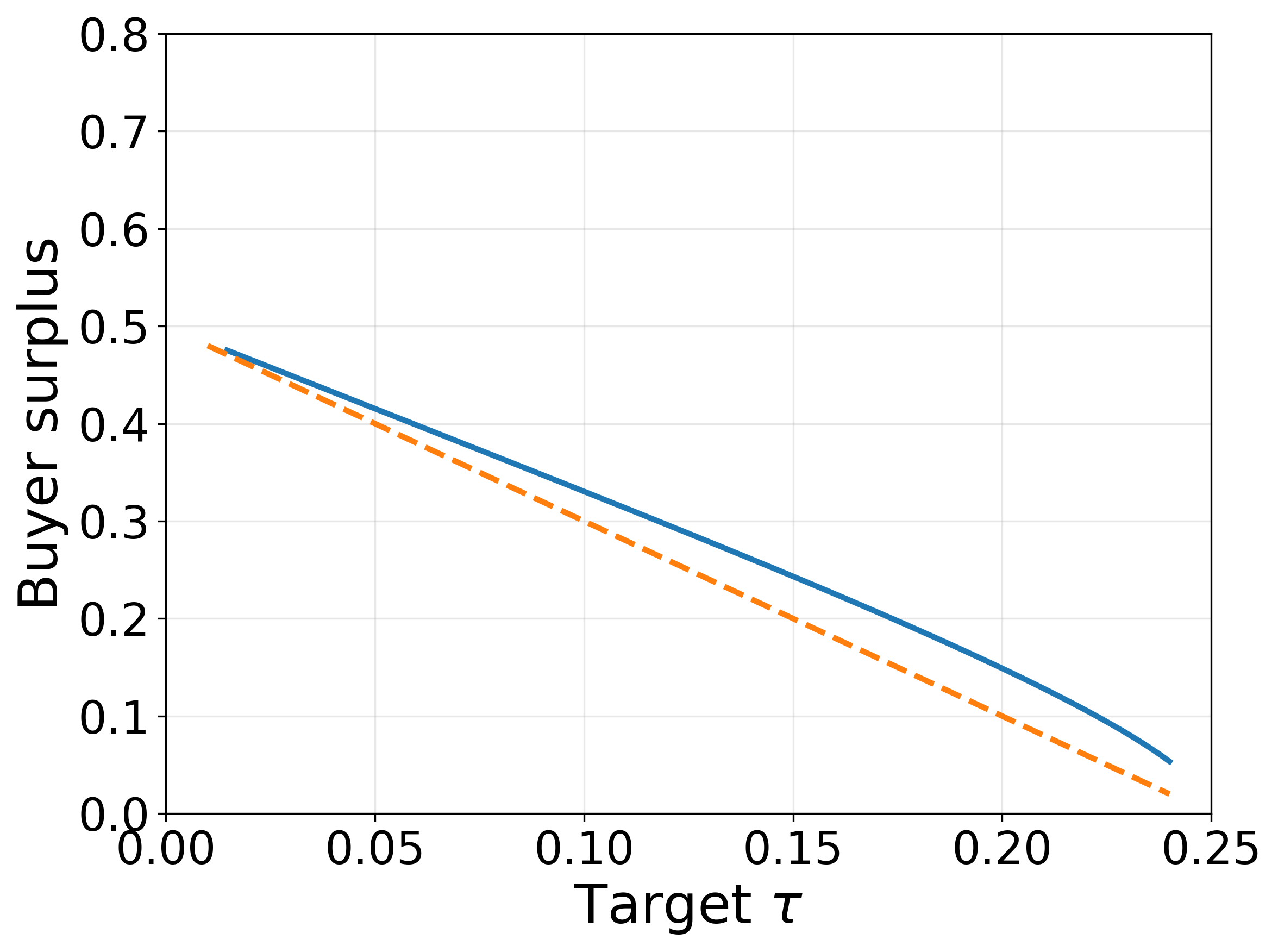}
    		\end{minipage}
            }
            \subfigure[~High Valuation ($v=0.75$)]
            {
    		\begin{minipage}{0.33\textwidth}
    				\includegraphics[width=0.96\textwidth]{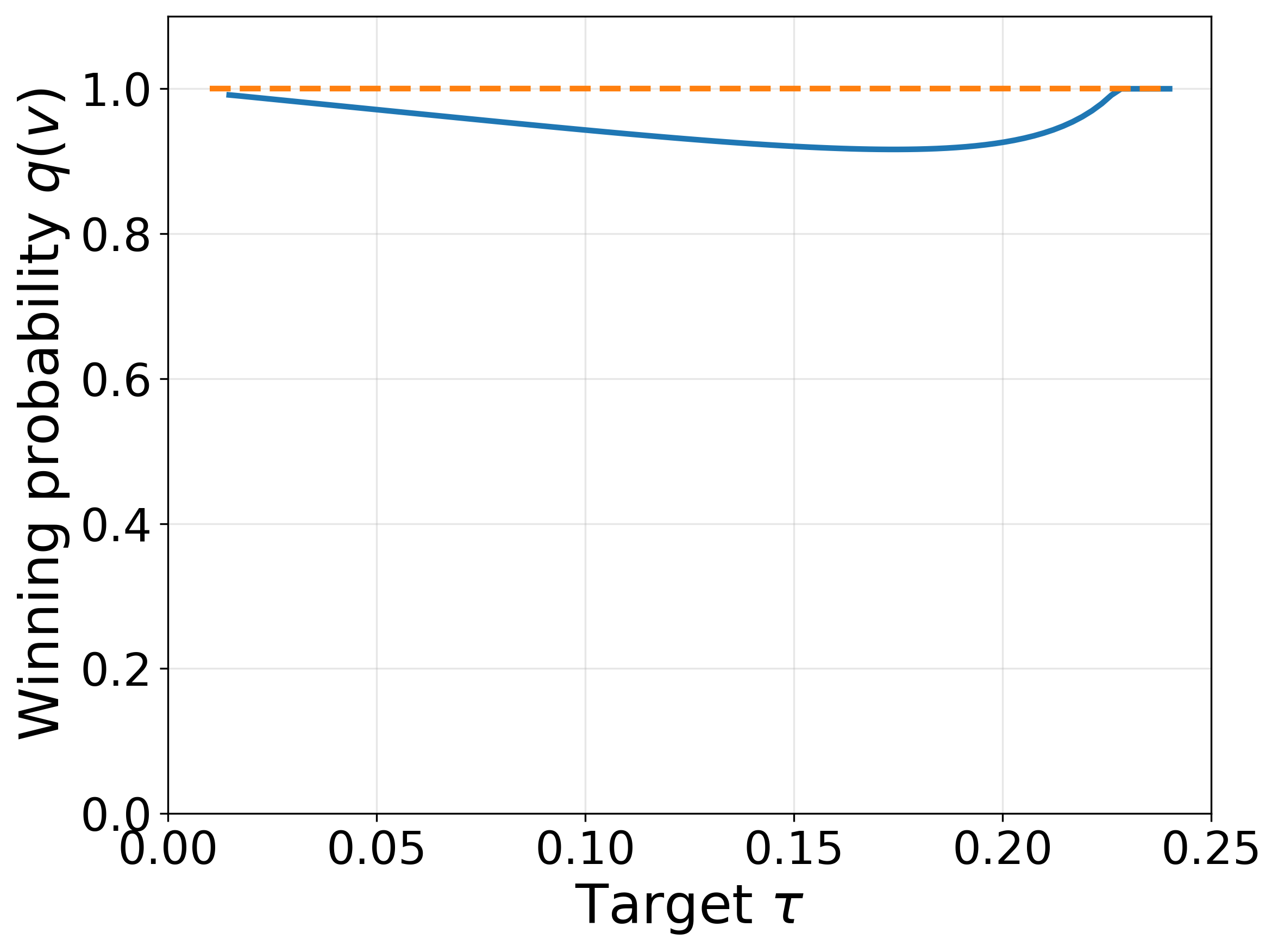}
                \vfill
    				\includegraphics[width=0.96\textwidth]{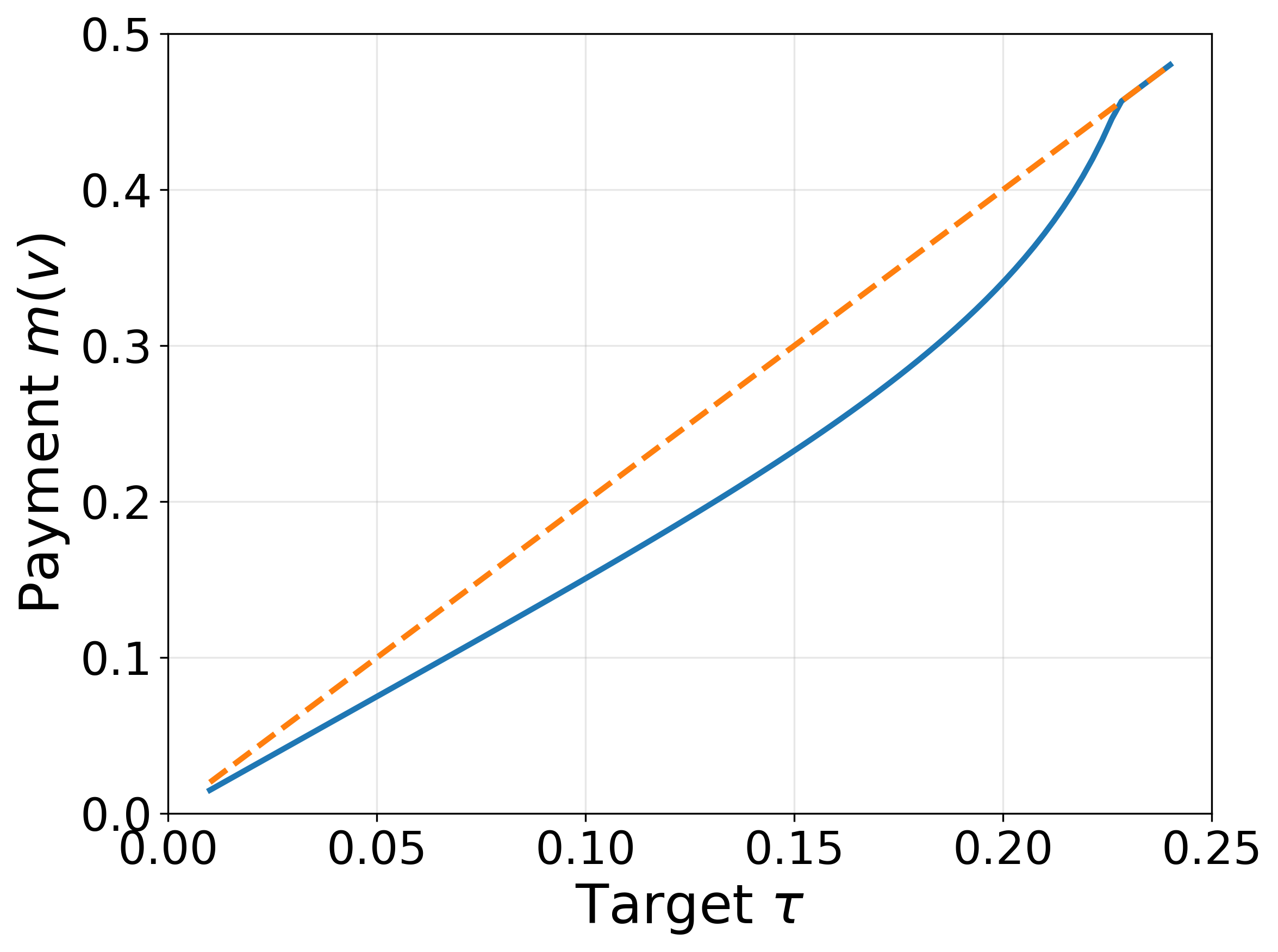}
                \vfill
                    \includegraphics[width=0.96\textwidth]{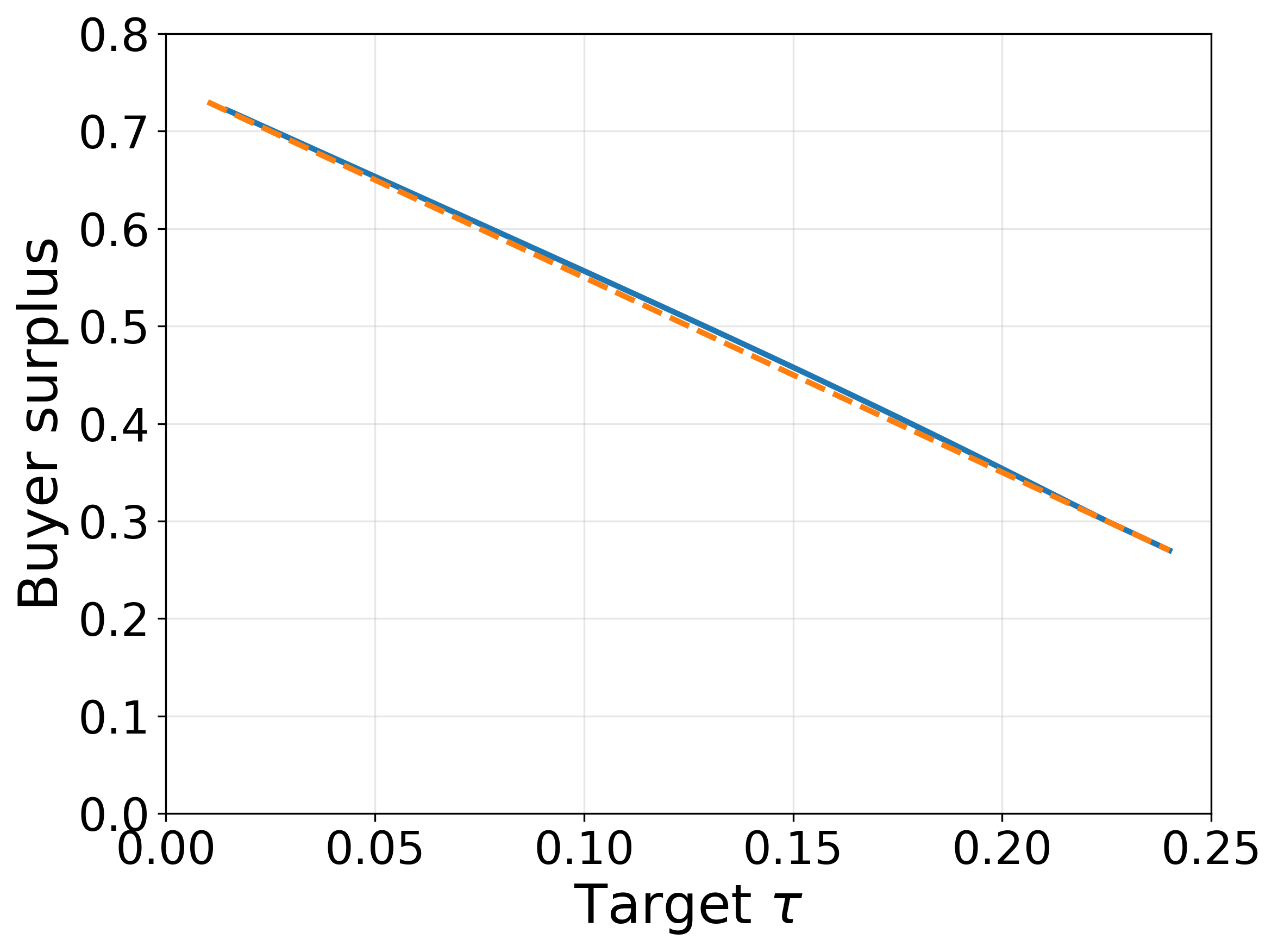}
    		\end{minipage}
            }
	}
	{Optimal vs PP Mechanisms under RS Framework: Winning Probabilities, Payments and Buyer Surplus\label{fig-rs-optVSpp}}
	{}
    \vspace{-0.2in}
\end{figure}
\begin{exap}\label{Exap_2}{\sf(Optimal vs PP Mechanisms under RS Framework)}
Given a buyer valuation distribution of $\{$\$0.25 (50\%), \$0.5 (30\%), \$0.75 (20\%)$\}$, and a seller who, with no prior information, adopts a uniform reference distribution with a revenue target of \$0.2. According to \eqref{equ-opt-RS-price-fragility-uniform}, the seller charges a fixed price of \$0.4 under the PP mechanism, effectively excluding the 50\% of low-valuation buyers valuing at \$0.25. In contrast, the optimal randomized pricing mechanism, as per Theorem \ref{Thm_optimalmechanism}, offers these low-valuation buyers a 30.7\% chance of purchase. Figure \ref{fig-rs-optVSpp} compares the PP and optimal mechanisms for these three buyer groups under the RS framework, detailing differences in winning probabilities, payments, and buyer surplus across various revenue targets.
\end{exap}

\subsubsection{Effectiveness of Posted Pricing Mechanism}\label{sec-rs-pp-effectiveness}
As the optimal mechanism can be implemented as a randomized posted pricing mechanism (Theorem \ref{Thm_optimalmechanism}), this section explores the effectiveness of simple deterministic posted pricing by comparing it to the optimal randomized mechanism. We first analyze the statistical properties of the optimal randomized pricing mechanism in the following proposition.

\begin{prop}{\sf (Statistical Properties of The Optimal Randomized Prices)}\label{prop-statistics-optRS}
    For a uniform reference distribution, the optimal randomized prices under the RS framework, as characterized in Theorem \ref{Thm_optimalmechanism}, exhibit the following statistical properties:
    \beq
        \mathbb{E}_{q_{RS}^*}[\tilde{p}] = p_{RS}^{PP} = 2\tau, \quad  \text{Var}(\tilde{p}) =  \tau(1-4\tau), 
        \quad \text{Skew}(\tilde{p}) =\frac{1}{\sqrt{\tau(1-4\tau)^3}} \left(\frac{2\tau^2}{3 k_{RS}^{*2}}+\frac{1}{2}-6\tau+16\tau^2 \right).
    \eeq
\end{prop}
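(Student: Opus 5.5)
The plan is a direct moment computation using the explicit form of the optimal mechanism for a uniform reference distribution. The key simplification comes from the symmetry $u+w=1$ of the iso-revenue roots.

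\textbf{Setup.} By Example \ref{exap-uniform-RS-opt}, $J^*=1$ for the uniform reference distribution. Write $u:=u_1^*$, $w:=w_1^*$ and $k:=k_{RS}^*$. These are the two roots of $x(1-x)=\pi^*$, so
\begin{equation*}
u+w=1, \qquad uw=\pi^*.
\end{equation*}
Theorem \ref{Prop_optimalPi} gives $\ln(w/u)=1/k$. By Theorem \ref{Thm_optimalmechanism}, the randomized price $\tilde p$ has density $k/v$ on $[u,w)$. Its raw moments are therefore
\begin{equation*}
\mathbb{E}[\tilde p]=k(w-u), \qquad \mathbb{E}[\tilde p^2]=\tfrac{k}{2}(w^2-u^2), \qquad \mathbb{E}[\tilde p^3]=\tfrac{k}{3}(w^3-u^3).
\end{equation*}

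\textbf{Step 1: eliminate $(u,w)$ using the target.} The optimal fragility satisfies $\rho^*(k)=\tau$. By \eqref{equ-min-fragility-revenue},
\begin{equation*}
\rho^*(k)=k\int_u^w(1-x)\,\mathrm{d}x = k\Bigl[(w-u)-\tfrac{1}{2}(w^2-u^2)\Bigr] = \tfrac{k}{2}(w-u),
\end{equation*}
where the last equality uses $w^2-u^2=(w-u)(w+u)=w-u$. Hence $k(w-u)=2\tau$, which immediately gives
\begin{equation*}
\mathbb{E}[\tilde p]=2\tau, \qquad \mathbb{E}[\tilde p^2]=\tfrac{k}{2}(w-u)=\tau.
\end{equation*}
The equality $\mathbb{E}[\tilde p]=p_{RS}^{PP}$ then follows from \eqref{equ-opt-RS-price-fragility-uniform}, which gives $p_{RS}^{PP}=2\tau$. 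The variance is $\tau-4\tau^2=\tau(1-4\tau)$.

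\textbf{Step 2: the third moment.} Here $k$ remains explicit, because $w^3-u^3=(w-u)(w^2+uw+u^2)$ and
\begin{equation*}
w^2+uw+u^2=(u+w)^2-uw=1-uw.
\end{equation*}
Thus $\mathbb{E}[\tilde p^3]=\tfrac{2\tau}{3}(1-uw)$. To express $uw$ in terms of $\tau$ and $k$, use
\begin{equation*}
(w-u)^2=(u+w)^2-4uw=1-4uw \quad\text{and}\quad w-u=2\tau/k,
\end{equation*}
which give $uw=\tfrac14\bigl(1-4\tau^2/k^2\bigr)$. Therefore
\begin{equation*}
\mathbb{E}[\tilde p^3]=\tfrac{\tau}{2}+\tfrac{2\tau^3}{3k^2}.
\end{equation*}

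\textbf{Step 3: skewness.} The third central moment is $\mathbb{E}[\tilde p^3]-3\,\mathbb{E}[\tilde p]\,\mathbb{E}[\tilde p^2]+2\,\mathbb{E}[\tilde p]^3$. Substituting the values above gives
\begin{equation*}
\tfrac{\tau}{2}+\tfrac{2\tau^3}{3k^2}-6\tau^2+16\tau^3=\tau\Bigl(\tfrac{2\tau^2}{3k^2}+\tfrac12-6\tau+16\tau^2\Bigr).
\end{equation*}
Dividing by $\text{Var}(\tilde p)^{3/2}=\tau^{3/2}(1-4\tau)^{3/2}$ yields the stated expression for $\text{Skew}(\tilde p)$.

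The only step that needs care is Step 2. The skewness cannot be written in $\tau$ alone, so $uw$ must be expressed through both $\tau$ and $k$, as above, rather than eliminating $k$ via the transcendental relation $\ln(w/u)=1/k$. Everything else is routine algebra.
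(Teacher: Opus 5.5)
Your proof is correct and is essentially the paper's argument: a direct moment computation for the density $k_{RS}^*/v$ on $[u_1^*,w_1^*)$, using $u_1^*+w_1^*=1$, $w_1^*-u_1^*=2\tau/k_{RS}^*$ and $u_1^*w_1^*=1/4-\tau^2/k_{RS}^{*2}$. The differences are cosmetic. You compute raw moments and convert them to central moments, whereas the paper integrates $(v-2\tau)^2 k_{RS}^*/v$ and $(v-2\tau)^3 k_{RS}^*/v$ directly and uses $k_{RS}^*\ln(w_1^*/u_1^*)=1$ to evaluate the logarithmic terms; your route uses that normalization only implicitly. You also derive $w_1^*-u_1^*=2\tau/k_{RS}^*$ from $\rho^*(k_{RS}^*)=\tau$, which the paper simply asserts.
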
 

\begin{figure}[!htb]
    \vspace{-0.2in}

    \FIGURE
    {
        \begin{minipage}{\textwidth}
            \centering
        \subfigure[~Uniform reference distribution]
        {
    		\includegraphics[width=0.45\textwidth]{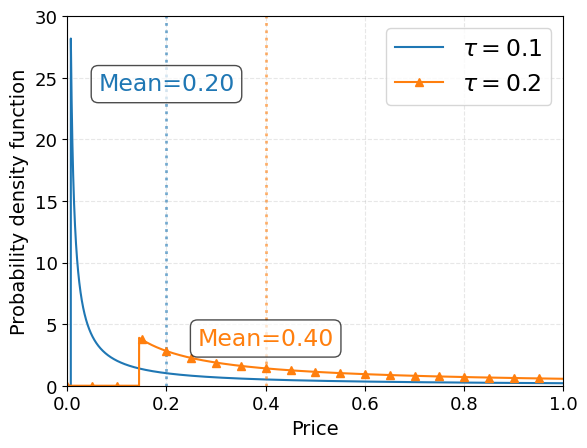}
    		\includegraphics[width=0.45\textwidth]{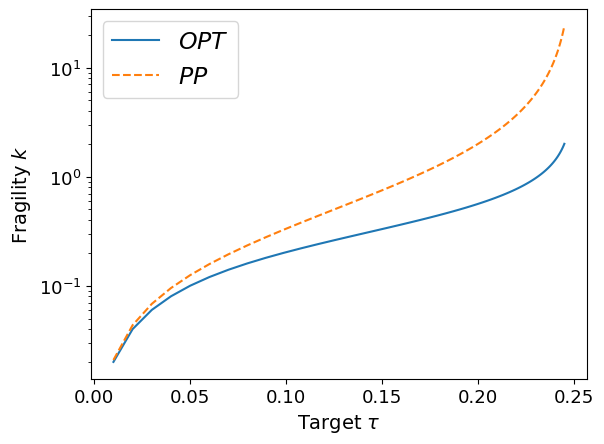}
        }
        \vfill \vspace{-0.1in}
        \subfigure[~Empirical reference distribution $\{(0.3,0.5),(\hat{v}_2,0.5)\}$]
        {
    		\includegraphics[width=0.45\textwidth]{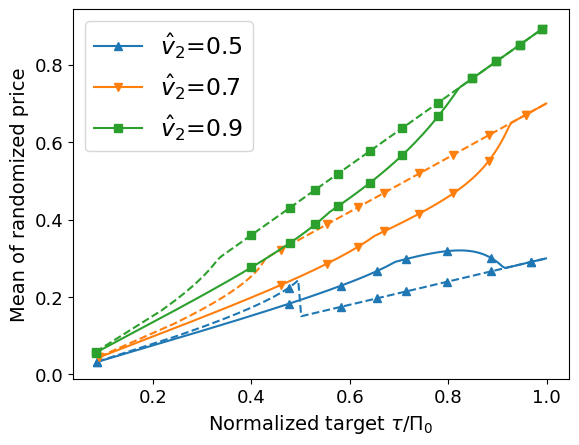}
    		\includegraphics[width=0.45\textwidth]{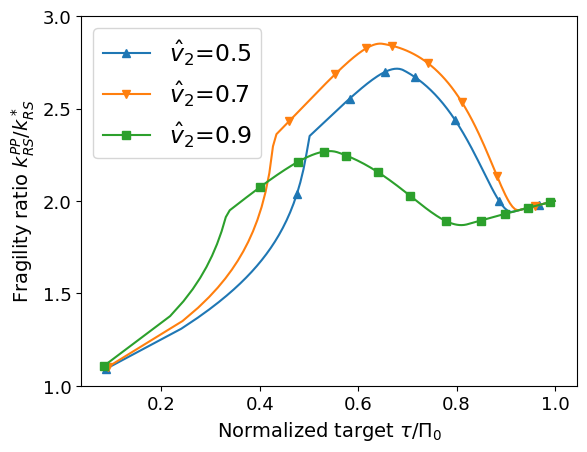}
        }
        \end{minipage}
    }
    {Optimal vs PP Mechanisms under RS Framework: Price and Fragility\label{fig-comparison-k}
    }
    {}
    \vspace{-0.2in}
\end{figure}



Interestingly, these results suggest that the optimal randomized prices serve as an unbiased estimator of the deterministic posted price when the seller adopts a uniform reference distribution. The revenue target parameter $\tau$ crucially controls both the mean and variability of the randomized posted price. As $\tau$ increases, the average randomized price rises, and the variance may decrease for high revenue targets (exceeding threshold of 0.125), indicating a reduced variability in randomized prices. This implies that, for higher revenue targets, randomized prices become more tightly centered around their mean, as illustrated in Figure \ref{fig-comparison-k}(a). This pattern occurs because a more aggressive seller, aiming for a higher revenue target, offers a stochastically higher randomized price (first stochastic order), with greater weight on higher prices toward the upper bound of $\$1$. As a result, price dispersion shrinks and the distribution becomes more concentrated.
Additionally, the randomized price exhibits positive skewness, signifying a fatter right tail. This feature arises from the underlying logarithmic function used in its derivation, which naturally generates a longer right tail. For a general reference distribution, randomized prices may not be unbiased estimators of the deterministic posted price. Figure \ref{fig-comparison-k}(b), for example, shows that average randomized prices (solid curves) can be larger or smaller than the deterministic posted price (dashed curves) under an empirical reference distribution, depending on the revenue target. They coincide only when the revenue target $\tau$ is sufficiently high.


Importantly, the optimal mechanism's aim to minimize fragility results in lower fragility than the PP mechanism, especially at high revenue targets (Figure \ref{fig-comparison-k}). Interestingly, as observed in Figure \ref{fig-rs-optVSpp}, the optimal mechanism can consistently yield a higher buyer surplus than the posted pricing mechanism. This numerical finding is formally established in the following theorem. 

\begin{thm}\label{Thm_comparison_opt_pp_bs}{\sf (Buyer Surplus Dominance)}
Assuming a regular reference distribution, for any given target $\tau$, there exists a threshold $\underline{v}_{RS}\in[q_{RS}^{PP},1]$ such that, compared to the posted pricing mechanism, the optimal mechanism yields a higher surplus for a buyer with valuation $v$:
\beq
q_{RS}^*(v)v-m_{RS}^*(v)\geq q_{RS}^{PP}(v)v-m_{RS}^{PP}(v) & \text{~if and only if~}& v\leq \underline{v}_{RS}. 
\eeq
Furthermore, for a power reference distribution (where $P_0(x)=x^{\alpha}$ for some $\alpha\geq 1$), the optimal mechanism yields a higher surplus than the PP mechanism for all buyers.
\end{thm}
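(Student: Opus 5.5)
The plan is to express buyer surplus through the allocation rule and reduce the comparison to a single-crossing argument.

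\textbf{Surplus representation.} For any $(q,m)\in\mathcal{M}$ with $m(0)=0$, the envelope representation gives the surplus $U(v)=q(v)v-m(v)=\int_0^v q(x)\,\mathrm{d}x$. For the optimal PP mechanism with price $p:=p_{RS}^{PP}$, $U^{PP}(v)=(v-p)^+$. (I read the lower end of the threshold interval in the statement as this price $p_{RS}^{PP}$.) Define
\[
D(v):=\int_0^v q_{RS}^*(x)\,\mathrm{d}x-(v-p)^+ .
\]
On $[0,p]$ we have $D(v)=\int_0^v q_{RS}^*\ge 0$. On $[p,1]$ we have $D'(v)=q_{RS}^*(v)-1\le 0$, so $D$ is non-increasing there. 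Since $D$ is continuous, $\{v: D(v)\ge 0\}$ is an interval $[0,\underline{v}_{RS}]$ with $\underline{v}_{RS}\ge p$. Concretely, $\underline{v}_{RS}=1$ if $D(1)\ge0$, and otherwise $\underline{v}_{RS}$ is the unique zero of $D$ on $[p,1]$. This proves the ``if and only if'' part.

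Regularity is used here only to invoke Proposition \ref{Prop_ContinuousSol}. It gives $J(\pi)=1$, so Theorem \ref{Thm_optimalmechanism} yields $q_{RS}^*(v)=k_{RS}^*\ln(v/u^*)$ on $[u^*,w^*)$, equal to $0$ below and $1$ above. One subtlety needs a short check: if $q_{RS}^*$ reaches $1$ at $w^*<1$, then $D$ is constant on $[w^*,1]$. The set where $D\ge0$ is still an interval, and the threshold is still well defined.

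\textbf{Power distributions.} Here it suffices to show $D(1)\ge 0$. Since $\int_0^1 q_{RS}^*(x)\,\mathrm{d}x=1-\mathbb{E}_{q_{RS}^*}[\tilde p]$, this is equivalent to
\[
\mathbb{E}_{q_{RS}^*}[\tilde p]=k_{RS}^*(w^*-u^*)\le p_{RS}^{PP},
\]
that is, the mean randomized price does not exceed the deterministic RS price. Both sides are tied to the same target $\tau$:
\begin{itemize}
\item By Theorem \ref{Prop_optimalPi}, $\ln(w^*/u^*)=1/k_{RS}^*$ and $\tau=k_{RS}^*\int_{u^*}^{w^*}\bar P_0$, where $u^*,w^*$ solve $x\bar P_0(x)=\pi^*$.
\item By Proposition \ref{Prop_ContinuousSol} and \eqref{equ-opt-RS-price-fragility}, $p_{RS}^{PP}=u(c^*)$ with $w(c^*)/u(c^*)=1+1/k_{RS}^{PP}$ and $\tau=\rho^{PP*}(k_{RS}^{PP})$.
\end{itemize}
With $\bar P_0(x)=1-x^\alpha$, both iso-revenue pairs lie on the curve $x(1-x^\alpha)=c$. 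I would parametrize each pair by its ratio $t=w/u$, which gives $u^\alpha=(t-1)/(t^{\alpha+1}-1)$. Every quantity then becomes an explicit function of one ratio: $t^*=e^{1/k^*}$ for the optimal mechanism and $t^{PP}=1+1/k^{PP}$ for posted pricing. The inequality becomes a comparison between two one-parameter families linked through the common value $\tau$. For $\alpha=1$ this is already checked: Proposition \ref{prop-statistics-optRS} gives equality $\mathbb{E}[\tilde p]=2\tau=p_{RS}^{PP}$.

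\textbf{Main obstacle.} The hard step is this final comparison for general $\alpha\ge1$. I would first try to view $\tau$ as increasing along both families and show that, at equal $\tau$, the map $\tau\mapsto k_{RS}^*(w^*-u^*)$ lies below $\tau\mapsto p_{RS}^{PP}$. Both vanish as $\tau\to0$ and coincide at $\tau=\Pi_0$ (both reduce to the monopoly price). I would then compare derivatives with respect to $\tau$, or alternatively show that the difference is monotone in $\alpha$ starting from the equality case $\alpha=1$. If that fails, a fallback is to bound $\int_{u^*}^{w^*}x^\alpha\,\mathrm{d}x$ using convexity of $x^\alpha$ (Jensen or Hermite--Hadamard). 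This would turn the $\tau$-constraint into an inequality between $k^*(w^*-u^*)$ and $\tau$ that can be matched against the PP relation $\tau=\mathbb{E}_{P_0}[\min\{p,k(\tilde v-p)^+\}]$.
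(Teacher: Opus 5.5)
Your threshold argument is correct and is the paper's own: write $D(v)=\int_0^v q_{RS}^*(x)\,\mathrm{d}x-(v-p_{RS}^{PP})^+$, note $D\ge 0$ below $p_{RS}^{PP}$ and $D'=q_{RS}^*-1\le 0$ above it. Regularity is not even needed for this part. Your reduction of the power-distribution claim to $D(1)\ge 0$, i.e.\ $k_{RS}^*(w^*-u^*)\le p_{RS}^{PP}$, also matches the paper. The gap is that you never prove this inequality. The derivative comparison and the monotonicity-in-$\alpha$ idea are only sketched. The Hermite--Hadamard fallback cannot work, because the two sides are in fact \emph{equal} for every $\alpha$, not just $\alpha=1$. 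Any bound that introduces slack on one side breaks an exact equality. As written, the second claim of the theorem is unproved.

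The missing idea is to put both prices in the same form and normalize by the common target $\tau$. On the optimal side, $\mathbb{E}_{q_{RS}^*}[\tilde p]=k_{RS}^*(w^*-u^*)$ and $\tau=k_{RS}^*\int_{u^*}^{w^*}\bar P_0(x)\,\mathrm{d}x$ by Theorem \ref{Prop_optimalPi}. On the PP side, $w^{PP}/u^{PP}=1+1/k_{RS}^{PP}$ gives $p_{RS}^{PP}=u^{PP}=k_{RS}^{PP}(w^{PP}-u^{PP})$. Integrating $\rho^{PP}(u^{PP},k_{RS}^{PP})$ by parts (the boundary term $u^{PP}\bar P_0(w^{PP})$ cancels) gives $\tau=k_{RS}^{PP}\int_{u^{PP}}^{w^{PP}}\bar P_0(x)\,\mathrm{d}x$. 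Eliminating the fragilities yields
\[
\mathbb{E}_{q_{RS}^*}[\tilde p]=\tau\,\frac{w^*-u^*}{\int_{u^*}^{w^*}\bar P_0(x)\,\mathrm{d}x},\qquad p_{RS}^{PP}=\tau\,\frac{w^{PP}-u^{PP}}{\int_{u^{PP}}^{w^{PP}}\bar P_0(x)\,\mathrm{d}x},
\]
so the comparison reduces to the averages of $\bar P_0$ over two iso-revenue intervals. For $\bar P_0(x)=1-x^\alpha$, the iso-revenue relation $u-u^{\alpha+1}=w-w^{\alpha+1}$ gives $w^{\alpha+1}-u^{\alpha+1}=w-u$. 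Hence $\int_u^w\bar P_0(x)\,\mathrm{d}x=(w-u)-\frac{w-u}{\alpha+1}=\frac{\alpha}{\alpha+1}(w-u)$ on \emph{every} iso-revenue interval; only the independence from the revenue level matters. Both prices therefore equal $\tau(\alpha+1)/\alpha$, so $D(1)=0$. Since $D$ is nonincreasing on $[p_{RS}^{PP},1]$, $D\ge 0$ on all of $[0,1]$. This is essentially the paper's route, and it avoids any comparison of one-parameter families in $\tau$ or $\alpha$.
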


The above theorem demonstrates that, compared to the posted pricing mechanism, the optimal mechanism can achieve a Pareto `win-win' outcome, resulting in lower fragility for the seller and higher surplus for buyers, particularly those with lower valuations. This Pareto outcome occurs when the reference distribution satisfies Myerson's regularity condition. Specifically, when the reference is a power distribution, the optimal mechanism provides a higher surplus to all buyers, regardless of their valuations, compared to the posted pricing mechanism. This dominance in buyer surplus holds irrespective of the revenue target $\tau$. Conversely, for a general regular reference distribution, the optimal mechanism tends to benefit buyers with low valuations, while the posted pricing mechanism favors those with high valuations, as illustrated in Example \ref{exap-exponential-opt-PP}.

\begin{exap}\label{exap-exponential-opt-PP}{\sf (When a PP Mechanism Yields a Higher Buyer Surplus)}
    Assuming the reference distribution is a truncated exponential with $P_0(x) = \frac{1-e^{-x}}{1-e^{-1}}$, and given $\tau = 0.1$, we find that for the highest valuation buyer with $v=1$, $q_{RS}^*(1) = q_{RS}^{PP}(1) = 1$, $m_{RS}^*(1) = 0.238$, $m_{RS}^{PP}(1) = 0.228$. The surplus for this buyer under the optimal mechanism is $0.762 (= q_{RS}^*(1) - m_{RS}^*(1))$, while under the PP mechanism, it is $0.772 (=q_{RS}^{PP}(1) - m_{RS}^{PP}(1))$. Thus, the PP mechanism yields a higher surplus for this buyer than the optimal mechanism. Figure \ref{Fig_opt_comparison_exp} illustrates that the PP mechanism generates a higher surplus for high-valuation buyers, whereas the optimal mechanism yields a greater surplus for low-valuation buyers.  
\end{exap}
\begin{figure}[!htb]
    \vspace{-0.1in}
    \FIGURE
    {
        \includegraphics[width=0.33\textwidth]{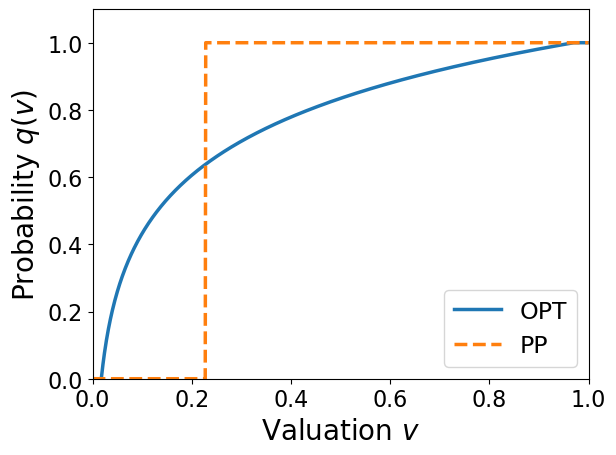}
        \includegraphics[width=0.33\textwidth]{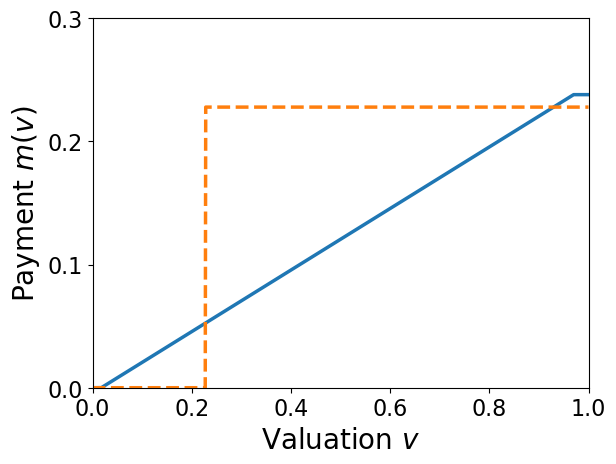}  
        \includegraphics[width=0.33\textwidth]{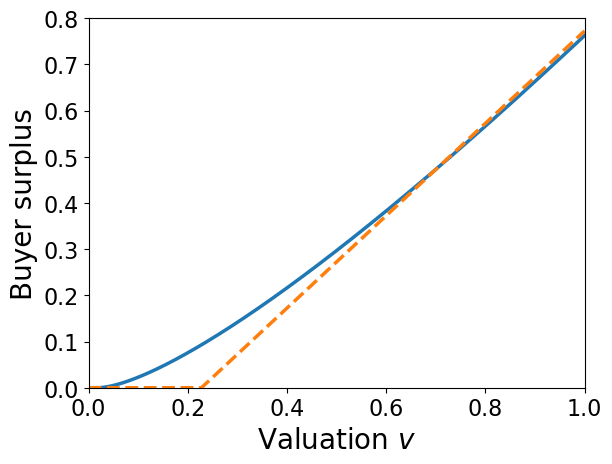}
    }
    {OPT vs PP Mechanisms under an Exponential Reference Distribution  \label{Fig_opt_comparison_exp}}
    {}
    \vspace{-0.2in}
\end{figure}

Finally, we conduct extensive numerical analysis to evaluate the \emph{out-of-sample} performance of the deterministic PP mechanism on seller revenue. Using the true valuation distribution $P$, we compare its expected revenue to that of the optimal randomized mechanism, with the out-of-sample performance ratio formally defined as:
$$\eta_{RS} := \frac{\mathbb{E}_{P}[p_{RS}^{PP}\mathbbm{1}(\tilde{v}\geq p_{RS}^{PP})]}{\mathbb{E}_P[m_{RS}^*(\tilde{v})]}.$$ This quantifies the expected seller revenue of the posted price relative to the fragility-optimal mechanism, both derived under the RS framework and evaluated at the true valuation distribution. Our numerical experiments, using a uniform reference distribution to reflect typical seller information scarcity, evaluate the out-of-sample performance ratio of the PP mechanism across various Beta-distributed true valuations. 

\begin{figure}[!ht]
\vspace{-0.1in}
    \centering
    \FIGURE
    {
        \includegraphics[width=  0.45\linewidth]{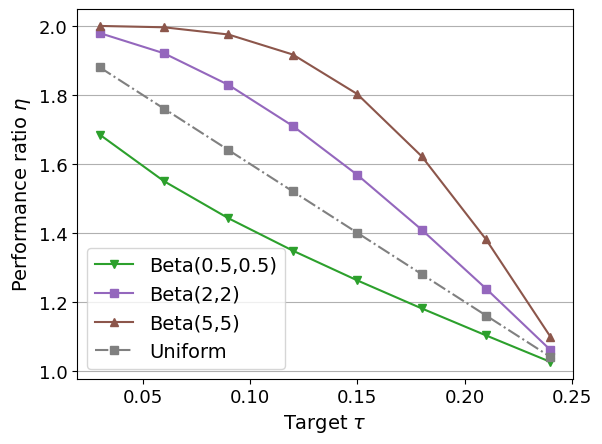}
        \includegraphics[width=  0.44\linewidth]{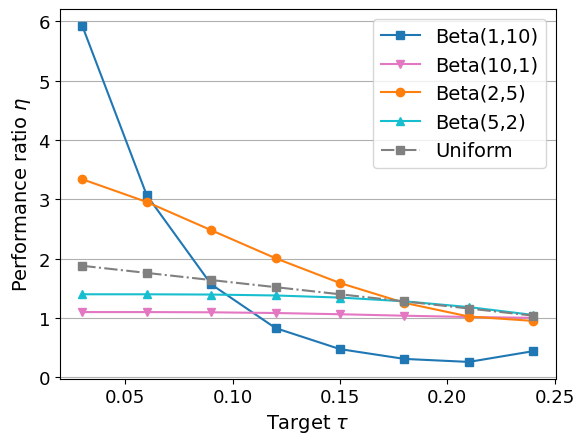}
    }
    {Out-of-Sample Performance Ratio $(\eta_{RS})$ with Uniform Reference Distribution \label{Fig_PRatio_GUniform} 
    }
    {This figure illustrates the out-of-sample performance ratio of a PP mechanism. The left panel shows symmetric true valuation distributions with identical means, while the right panel depicts asymmetric ones.}
\vspace{-0.1in}
\end{figure}

Our numerical analysis reveals that while the optimal randomized mechanism minimizes fragility, it does not consistently outperform the deterministic PP mechanism in terms of revenue, especially at lower revenue targets (see Figure \ref{Fig_PRatio_GUniform}). The performance of the deterministic PP mechanism is significantly influenced by the revenue target, variability, and skewness of the underlying valuations. For symmetric valuation distributions (left panel of Figure \ref{Fig_PRatio_GUniform}, where Beta$(\alpha,\alpha)$ has a mean of 0.5 and variance $1/(4(2\alpha+1))$), the deterministic PP mechanism performs best with lower valuation variability. In contrast, for skewed distributions (right panel of Figure \ref{Fig_PRatio_GUniform}, where the skewness of Beta$(\alpha,\beta)$ is proportional to $\beta-\alpha$), the deterministic PP mechanism can significantly outperform the randomized approach, particularly at low revenue targets when skewness is positive (indicating a concentration of small values with a fatter right tail, as illustrated in Figure \ref{Fig_Beta_PDF}). The randomized mechanism tends to be overly conservative under low revenue targets, sacrificing potential revenue to manage fragility against a fatter right tail. As sellers pursue higher revenue targets, the effectiveness of the deterministic PP mechanism declines in both expected revenue and the risk of underperforming the target, particularly when valuations exhibit high positive skewness. More numerical results can be found in Figure \ref{Fig_optvspp_rs_comparison} to verify that the deterministic PP mechanism is more effective for low revenue targets or positive skewness in valuations.

\section{Robust Mechanisms: RS vs RO Frameworks}\label{sec-RS-vs-RO}
As outlined in Section \ref{Sec_Setup}, in the context of distributional ambiguity, the robust optimal mechanism \eqref{Prob_RO} seeks to maximize worst-case revenue, whereas the robust satisficing mechanism \eqref{Prob_RS} aims to minimize the fragility of falling short of a specific target. In this section, we compare these two robust frameworks to evaluate their effects on the seller's expected revenue, buyer surplus, allocation probabilities, and the effectiveness of deterministic posted pricing mechanisms. For self-completeness, we first restate the robust optimal mechanism in the following lemma, as examined in Proposition 6 of \cite{chen2024screening}.

\begin{lem}\label{Lm_optimalmechanism} {\sf (Optimal RO Mechanism)}
Given the ambiguity size $r$, the maximum worst-case revenue $\Pi_{RO}^*(r)$ is the unique solution of $\pi \in [0,1]$ to $\int_0^1[\bar{P}_0(x)-\pi/x]^+\mathrm{d}x = r$. The robust optimal mechanism to \eqref{Prob_RO} is given by: For each $j=0,1,\dots,J$,
\beq
\left(q_{RO}^*\left(v\right),m_{RO}^*\left(v\right)\right) = \left\{\begin{array}{ll}
    \alpha \big(\ln (\frac{v}{u_j})+\sum_{i=1}^{j-1} \ln (\frac{w_i}{u_i}),\, (v-u_j)+\sum_{i=1}^{j-1}(w_i-u_i) \big), \quad v \in [u_j,w_j)\\
    \alpha \big(\sum_{i=1}^{j} \ln (\frac{w_i}{u_i}),\, \sum_{i=1}^{j}(w_i-u_i) \big), \quad v \in[w_j,u_{j+1}),   
\end{array}\right.
\eeq
where $J$, $u_j$, and $w_j$ denote $J(\Pi_{RO}^*(r))$, $u_j(\Pi_{RO}^*(r))$, and $w_j(\Pi_{RO}^*(r))$, respectively, and $\alpha = 1/\sum_{j \in [J]}\ln(w_j/u_j)$. Additionally, for a uniform reference distribution (i.e., $P_0(x)=x$), $\Pi_{RO}^*(r)$ is the unique solution of $\pi\in[0,1]$ to the equation $r = \frac{\sqrt{1-4\pi}}{2}-\pi \ln \frac{1+\sqrt{1-4\pi}}{1-\sqrt{1-4\pi}}$. In this scenario, $J=1$, $u_1 = \frac{1-\sqrt{1-4\Pi_{RO}^*(r)}}{2}$, $w_1 = \frac{1+\sqrt{1-4\Pi_{RO}^*(r)}}{2}$, and $\alpha = \frac{1}{\ln(1+\sqrt{1-4\Pi_{RO}^*(r)})-\ln (1-\sqrt{1-4\Pi_{RO}^*(r)})}$. The optimal deterministic posted price is $p_{RO}^{PP} = \frac{1-\sqrt{2r}}{2}$.
\end{lem}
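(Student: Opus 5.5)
The lemma restates results of \cite{chen2024screening}, but it can also be derived from the machinery already developed here. The plan has three steps: reduce the robust problem to a minimax pricing problem, read off the optimal mechanism from the RS analysis, and specialize to the uniform case.

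\textbf{Step 1: reduction to minimax pricing.} Write $\inf_{P\in\mathcal{W}(P_0,r)}\mathbb{E}_P[m(\tilde v)]$ in Lagrangian form:
\[
\sup_{(q,m)\in\mathcal{M}}\ \sup_{k\ge 0}\ \inf_{P\in\mathcal{P}}\ \bigl[\mathbb{E}_P[m(\tilde v)]+k\,d(P,P_0)-kr\bigr].
\]
This uses Lagrangian duality for the single convex constraint $d(P,P_0)\le r$, with Slater's condition holding at $P=P_0$ when $r>0$. Swapping $\sup$ over mechanisms with $\sup$ over $k$ and applying Proposition \ref{Lm_pricing} then gives
\[
\Pi_{RO}^*(r)=\sup_{k\ge 0}\Bigl\{\inf_{\pi\in[0,\Pi_0]}\bigl[\pi+k\,d(\pi)\bigr]-kr\Bigr\},
\]
where the inner problem has been reduced to $\pi$ exactly as in the proof of Proposition \ref{Prop_robustpricing}.

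\textbf{Step 2: the optimal value and mechanism.} The function $\rho(\cdot,k)$ is convex by Theorem \ref{Prop_optimalPi}. Equivalently, $\inf_{P\in\mathcal{W}(P_0,r)}\max_x\mathcal{R}_P(x)=\Pi_{RO}^{PP}(r)$ by Lemma \ref{Lm_chen}, and convex duality in $r$ then gives $\Pi_{RO}^*(r)=\Pi_{RO}^{PP}(r)$. In other words, randomization does not improve on the minimax posted-price revenue, and the worst-case revenue $\pi$ solves $d(\pi)=r$.

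The optimal multiplier $k$ is fixed by the first-order condition of Theorem \ref{Prop_optimalPi}: we need $\pi^*(k)=\pi$, that is, $\sum_j\ln(w_j/u_j)=1/k$, so $k=\alpha$. Substituting this $k$ into Theorem \ref{Thm_optimalmechanism} in place of $k_{RS}^*$ yields exactly the stated $(q_{RO}^*,m_{RO}^*)$. Finally, I verify directly that this mechanism guarantees revenue $\pi$ against every $P\in\mathcal{W}(P_0,r)$ and that $q^*(1)=\alpha\sum_j\ln(w_j/u_j)=1$.

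\textbf{Step 3: the uniform case.} Take $\bar P_0(x)=1-x$. The roots of $x(1-x)=\pi$ give $J=1$ with $u_1,w_1=(1\mp\sqrt{1-4\pi})/2$. Then
\[
d(\pi)=\int_{u_1}^{w_1}\Bigl(1-x-\frac{\pi}{x}\Bigr)\mathrm{d}x=\frac{w_1^2-u_1^2}{2}\cdot\frac{2}{w_1+u_1}\cdot\frac12\ldots,
\]
and the direct computation simplifies to $\frac{\sqrt{1-4\pi}}{2}-\pi\ln\frac{w_1}{u_1}$. This is the stated equation, with $\alpha=1/\ln(w_1/u_1)$.

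For the posted price, I take $\min_{P\in\mathcal{W}}p\,\bar P_-(p)$. The adversary optimally moves the mass on $[p,p+s)$ just below $p$, at cost $\int_p^{p+s}(x-p)\,\mathrm{d}x=s^2/2=r$. This gives $s=\sqrt{2r}$ and worst-case revenue $p(1-p-\sqrt{2r})$, which is maximized at $p=(1-\sqrt{2r})/2$.

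\textbf{Main obstacle.} The delicate step is justifying the exchanges of $\sup$ and $\inf$ in Step 1 over infinite-dimensional spaces. For this I rely on the functional minimax theorem already invoked in Proposition \ref{Lm_pricing}, together with compactness of $\mathcal{P}$ in the weak topology.
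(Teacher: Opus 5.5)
The paper gives no proof of this lemma. It imports it from \cite{chen2024screening}: Proposition 6 there for the mechanism, and their Theorem 4 (restated as Lemma \ref{Lm_chen}) for the value. Deriving it from the paper's own RS machinery is therefore a genuinely different route, and your value argument is sound. For a fixed mechanism, duality for the single convex constraint $d(P,P_0)\le r$, with Slater point $P_0$, is the finite-constraint convex duality theorem and needs no topology. Proposition \ref{Lm_pricing} then handles the exchange over mechanisms. Finally, since $d(\cdot)$ is convex and decreasing, $r\mapsto\Pi_{RO}^{PP}(r)$ is convex, so the biconjugate in $r$ returns it. (Read ``randomization does not improve'' only in the inf--max sense. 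The max--min deterministic price you compute in Step 3 earns $(1-\sqrt{2r})^2/4$, which is in general strictly smaller.)

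The gap is in the mechanism part of Step 2. Substituting $k=\alpha$ into Theorem \ref{Thm_optimalmechanism} is circular within this paper. Theorem \ref{Thm_optimalmechanism} is proved precisely by quoting the RO mechanism of \cite{chen2024screening}, i.e.\ the statement you are trying to prove. Your sentence ``I verify directly\ldots'' therefore carries the whole claim and must actually be written out. It is short. Let $S=\bigcup_{j\in[J]}[u_j,w_j)=\{x:\bar P_0(x)\ge\pi/x\}$ and $m^*(v)=\alpha\int_0^v\mathbbm{1}(x\in S)\,\mathrm{d}x$. Then every $P\in\mathcal{W}(P_0,r)$ satisfies
\[
\mathbb{E}_P[m^*(\tilde v)]=\alpha\int_S\bar P(x)\,\mathrm{d}x\ \ge\ \alpha\Bigl(\int_S\bar P_0(x)\,\mathrm{d}x-d(P,P_0)\Bigr)\ \ge\ \alpha\Bigl(\int_S\bar P_0(x)\,\mathrm{d}x-d(\pi)\Bigr)=\alpha\int_S\frac{\pi}{x}\,\mathrm{d}x=\pi .
\]
Together with your identification of the value, this closes the proof.

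This computation also gives a shorter proof on its own. Take the reverse bound from the worst-case distribution $\bar P^w=\min\{\bar P_0,\pi/x\}$ of Lemma \ref{Lm_chen}: every mechanism earns $\int_0^1 x\,\bar P^w_{-}(x)\,\mathrm{d}q(x)\le\pi$ against it. This saddle-point sandwich proves the first part of the lemma without the minimax theorem, without Lagrangian duality, and without Theorems \ref{Prop_optimalPi}--\ref{Thm_optimalmechanism}. So the $\sup$/$\inf$ exchange you flag as the main obstacle is not really where the difficulty lies. What your duality route does buy is the identification of $\alpha$ as the optimal multiplier on the radius, with $\rho^*(\alpha)=\Pi_{RO}^*(r)+\alpha r$. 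That is exactly the equivalence condition of Theorem \ref{Prop_Connection}, so your route explains that theorem.

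In Step 3, the displayed intermediate product for $d(\pi)$ is meaningless. The correct computation is $\int_{u_1}^{w_1}(1-x)\,\mathrm{d}x=(w_1-u_1)\bigl(1-\frac{w_1+u_1}{2}\bigr)=\frac{\sqrt{1-4\pi}}{2}$ together with $\int_{u_1}^{w_1}\pi/x\,\mathrm{d}x=\pi\ln(w_1/u_1)$, which gives the stated equation. The posted-price argument is correct. Shifting the mass nearest above $p$ is the adversary's cheapest move, and $p+\sqrt{2r}=(1+\sqrt{2r})/2\le 1$ because $r<\mu_0=1/2$.
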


Interestingly, the optimal mechanism under the RO framework shares the same structure with the one under the RS framework. Specifically, as discussed following Theorem \ref{Thm_optimalmechanism}, the robust optimal mechanism can be implemented using a randomized pricing mechanism, described in \eqref{Prob_RP}, where the randomized price $\tilde{p}$ is drawn from the probability distribution $q_{RO}^*\in\mathcal{P}$. The sole difference between these two frameworks is the choice of revenue level used to select the intersections $\{u_j,w_j\}_{j\in[J]}$ between the reference distribution $\bar{P}_0(x)$ and the iso-revenue curve $\pi/x$. In the RO framework, the revenue level is set to the maximum worst-case revenue $\Pi_{RO}^*(r)$, whereas in the RS framework, it corresponds to the worst-case revenue $\pi^*(k_{RS}^*(\tau))$ such that the fragility-adjusted revenue meets the revenue target $\tau$. In the next proposition, we demonstrate that these two robust frameworks are equivalent, producing the exact same mechanism when the two exogenous parameters -- the ambiguity size $r$ in the RO framework and the revenue target $\tau$ in the RS framework -- satisfy the equivalence condition specified in \eqref{equ-equivalence-condition}.

\begin{thm}\label{Prop_Connection}{\sf (Equivalence Between RO and RS Mechanisms)}
    Assume that the ambiguity size $r$ and the revenue target $\tau$ satisfy:
    \bea
    \tau = \tau^{\sf Equiv}(r), \quad \mbox{where} \quad \tau^{\sf Equiv}(r):= \Pi_{RO}^*(r) + \frac{r}{\sum_{j \in [J]} \ln \left[w_j\left(\Pi_{RO}^*(r)\right)/u_j\left(\Pi_{RO}^*(r)\right)\right]}. \label{equ-equivalence-condition}
    \eea
    Then, the frameworks of distributionally robust optimization \eqref{Prob_RO} and robust satisficing \eqref{Prob_RS} are equivalent, resulting in the same optimal selling mechanism.
\end{thm}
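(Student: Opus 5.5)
Both optimal mechanisms have the same logarithmic form, determined entirely by one revenue level through the intervals $\{(u_j(\pi),w_j(\pi))\}_{j\in[J(\pi)]}$ and a scale factor. In Theorem \ref{Thm_optimalmechanism} the level is $\pi^*(k_{RS}^*)$ and the scale is $k_{RS}^*$. In Lemma \ref{Lm_optimalmechanism} the level is $\Pi_{RO}^*(r)$ and the scale is $\alpha=1/\sum_{j}\ln(w_j/u_j)$. The plan is therefore to show that when $\tau=\tau^{\sf Equiv}(r)$, these two revenue levels and the two scale factors coincide. Identical mechanisms then follow by direct substitution into \eqref{equ-opt-rs-mechanism}.

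Fix $r$ and write $\pi_r:=\Pi_{RO}^*(r)$. Set $k_r:=1/\sum_{j\in[J(\pi_r)]}\ln\left(w_j(\pi_r)/u_j(\pi_r)\right)$, which equals $\alpha$ in Lemma \ref{Lm_optimalmechanism}.

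\textbf{Step 1.} By Theorem \ref{Prop_optimalPi}, $\pi^*(k)$ is the unique root of $\sum_j\ln(w_j(\pi)/u_j(\pi))=1/k$. By construction $\pi_r$ is a root for $k=k_r$, so $\pi^*(k_r)=\pi_r$.

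\textbf{Step 2.} Compute $\rho^*(k_r)=\rho(\pi_r,k_r)=\pi_r+k_r\,d(\pi_r)$. Lemma \ref{Lm_chen} and Lemma \ref{Lm_optimalmechanism} give $d(\pi_r)=r$. Hence $\rho^*(k_r)=\pi_r+r k_r=\tau^{\sf Equiv}(r)=\tau$.

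\textbf{Step 3.} Theorem \ref{Prop_optimalPi} states that $\rho^*$ is monotonically increasing, so $k_{RS}^*=(\rho^*)^{-1}(\tau)=k_r$. Consequently $\pi^*(k_{RS}^*)=\pi_r$, the intervals $u_j^*,w_j^*$ and the count $J^*$ agree with those of the RO mechanism, and the scale $k_{RS}^*$ equals $\alpha$. Substituting into Theorem \ref{Thm_optimalmechanism} then reproduces the formula of Lemma \ref{Lm_optimalmechanism} term by term.

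The main obstacle is Step 3. We need $(\rho^*)^{-1}$ to be single-valued, meaning $\rho^*$ must be strictly increasing, at least at $k_r$. If $\rho^*$ were flat there, several fragilities could attain $\tau$. I would establish strictness with an envelope argument. Since $\rho(\pi,k)$ is linear in $k$ and convex in $\pi$ with a unique minimizer, Danskin's theorem gives $\frac{d\rho^*}{dk}=d(\pi^*(k))$. This is strictly positive whenever $\pi^*(k)<\Pi_0$, because $d(\pi)>0$ for $\pi<\Pi_0$ (the reference CCDF strictly exceeds $\pi/x$ on a set of positive measure).

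Two edge cases need care:
\begin{itemize}
\item \emph{Case $r=0$.} Here $\pi_r=\Pi_0$, and the left-hand side of the root equation may degenerate. I would either exclude this case or treat it directly: $\tau=\Pi_0$ forces $k\to\infty$, mirroring the RO side.
\item \emph{Case $r\ge\mu_0$.} This is outside the regime of Lemma \ref{Lm_chen}. I would restrict to $r<\mu_0$, as is implicit in that lemma.
\end{itemize}

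I would also check that the root equation is continuous in $\pi$ despite the possible jumps in $J(\pi)$, as in the empirical example. This ensures that Step 1 uses exactly the same intervals that Lemma \ref{Lm_optimalmechanism} uses at $\pi_r$, which holds because both results evaluate $u_j,w_j$ at the same level $\pi_r$.
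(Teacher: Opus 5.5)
Your proposal is correct and follows the paper's route: the paper's proof is the single line ``a direct result of Theorem~\ref{Prop_optimalPi},'' and your Steps 1--3 spell that out, namely choose $k_r$ so that $\pi^*(k_r)=\Pi_{RO}^*(r)$, use $d(\Pi_{RO}^*(r))=r$ to get $\rho^*(k_r)=\tau^{\sf Equiv}(r)$, and invert $\rho^*$; your edge-case caveats ($r=0$, where the defining ratio is $0/0$, and $r\ge\mu_0$) are also appropriate. Step 3 needs no Danskin argument, because for any $k<k_r$ one has $\rho^*(k)\le\rho(\pi_r,k)=\pi_r+kr<\pi_r+k_r r=\tau$ whenever $r>0$, which gives $k_{RS}^*=k_r$ directly.
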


Notably, Theorem \ref{Prop_Connection} establishes the equivalence between the two robust frameworks in the context of mechanism design under distributional ambiguity. Specifically, for any given revenue target, the radius parameter in the RO problem can be selected such that the family of the RO mechanisms belongs to the family of the RS mechanisms. Conversely, for any given radius of ambiguity set, there is a corresponding revenue target in the RS problem such that the family of the RS mechanisms belongs to the family of the RO mechanisms. In different contexts such as network lot-sizing and portfolio optimization, \cite{wang2025equivalence} identified the conditions under which a similar equivalence holds between the RO and RS frameworks. 

\begin{figure}[!htb]
    \vspace{-0.1in}
    \FIGURE
    {
        \includegraphics[width=0.33\textwidth]{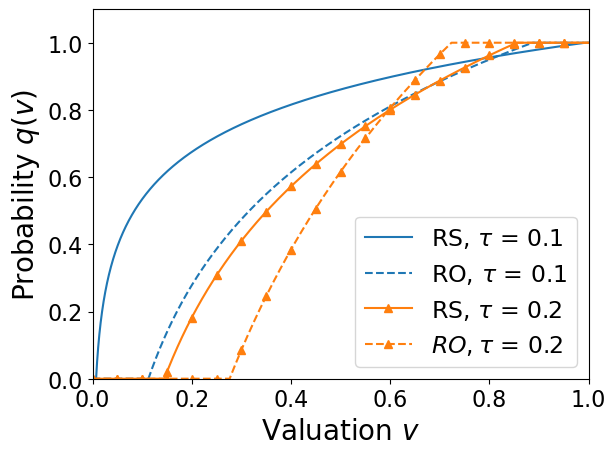}
        \includegraphics[width=0.33\textwidth]{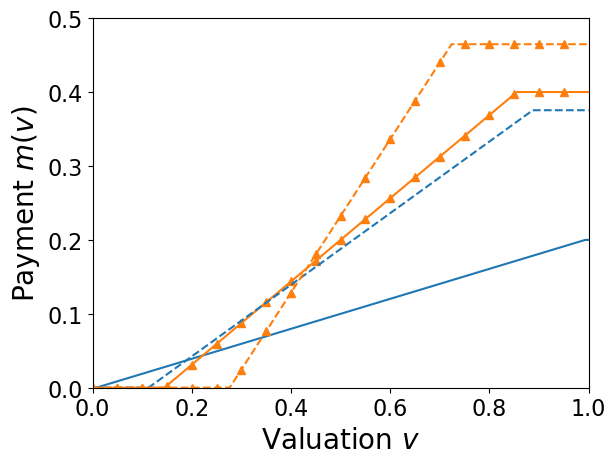} 
        \includegraphics[width=0.33\textwidth]{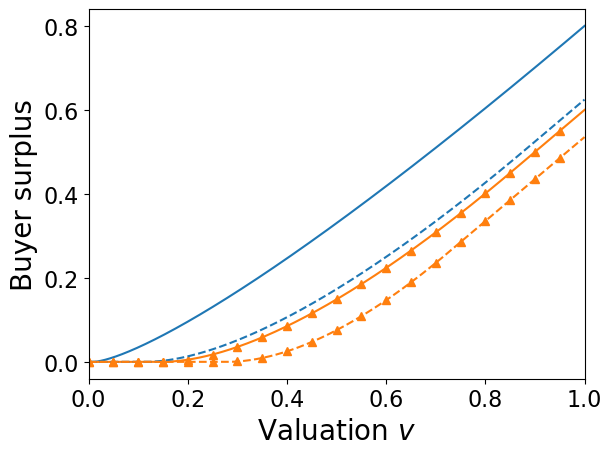}
    }
    {RS vs RO Mechanisms under a Uniform Reference Distribution\label{Fig_opt_comparison}}
    {The in-sample worst-case expected revenue is $0.1$ for RO and $0.007$ for RS when $\tau=0.1$, and $0.2$ for RO and $0.12$ for RS when $\tau=0.2$.}
    \vspace{-0.1in}
\end{figure}

Although the equivalence condition can be analytically described, finding the equivalent target for a given radius is challenging due to the difficulty in identifying all intersections $\{u_j,w_j\}_{j\in[J]}$. In the remainder of this section, we compare the two mechanisms by focusing on a common practical choice for the target: setting it equal to the maximum worst-case revenue associated with a given radius, i.e., $\tau = \Pi_{RO}^*(r)$. Notably, compared to RO, the RS framework tends to favor low-valuation buyers by allocating a higher winning probability, as formally stated in Proposition \ref{Prop_ComparisonU}.


\begin{prop}\label{Prop_ComparisonU}
Assuming $\tau = \Pi_{RO}^*(r)$, the RS mechanism allocates a higher selling probability to low-valuation buyers compared to the RO mechanism, i.e., $q_{RS}^*(v) \ge q_{RO}^*(v)$ for any $v\le u_1\left(\Pi_{RO}^*\left(r\right)\right)$.
\end{prop}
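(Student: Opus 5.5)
The plan is to show that for $v\le u_1(\Pi_{RO}^*(r))$ the RO allocation vanishes, while the RS allocation is nonnegative and in fact strictly positive on part of this range. This requires comparing the revenue levels at which the two mechanisms cut the reference distribution by the iso-revenue curve. Write $\pi_{RO}:=\Pi_{RO}^*(r)=\tau$, and let $\pi^*:=\pi^*(k_{RS}^*)$ be the RS worst-case revenue from Theorem \ref{Prop_optimalPi}, where $\rho^*(k_{RS}^*)=\tau$.

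\textbf{Step 1: $\pi^*\le\pi_{RO}$.} By definition,
$$\tau=\rho^*(k_{RS}^*)=\pi^*+k_{RS}^*\,d(\pi^*).$$
Since $d(\pi)=\int_0^1(\bar P_0(x)-\pi/x)^+\,\mathrm{d}x\ge 0$, we get $\pi^*\le\tau=\pi_{RO}$. The inequality is strict whenever $d(\pi^*)>0$, which holds for $\pi^*<\Pi_0$.

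\textbf{Step 2: monotonicity of the intersection intervals.} For $\pi'\le\pi$ we have $\pi'/x\le\pi/x$. Hence
$$\{x:\bar P_0(x)\ge\pi/x\}\subseteq\{x:\bar P_0(x)\ge\pi'/x\},$$
so $\bigcup_j[u_j(\pi),w_j(\pi))\subseteq\bigcup_j[u_j(\pi'),w_j(\pi'))$. Because $u_1(\pi)$ is the left endpoint of the first such interval (the smallest point where the iso-revenue curve drops below $\bar P_0$), it is nondecreasing in $\pi$. Together with Step 1 this gives $u_1^*=u_1(\pi^*)\le u_1(\pi_{RO})$.

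\textbf{Step 3: compare the allocations.} By Lemma \ref{Lm_optimalmechanism}, on $[w_0,u_1)=[0,u_1(\pi_{RO}))$ (the $j=0$ piece) we have $q_{RO}^*(v)=0$. At $v=u_1(\pi_{RO})$ we also have $q_{RO}^*=\alpha\ln(u_1/u_1)=0$. So $q_{RO}^*(v)=0$ for all $v\le u_1(\pi_{RO})$. Meanwhile $q_{RS}^*(v)\ge0$ for every $v$ by Theorem \ref{Thm_optimalmechanism}, since it is a CDF of randomized prices. Therefore $q_{RS}^*(v)\ge q_{RO}^*(v)$ for all $v\le u_1(\pi_{RO})$.

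For $v\in(u_1^*,u_1(\pi_{RO})]$ the inequality is strict: there $q_{RS}^*(v)\ge k_{RS}^*\ln\bigl(\min\{v,w_1^*\}/u_1^*\bigr)>0$. This shows the RS mechanism genuinely serves low-valuation buyers whom RO excludes.

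The only real content is Step 2, namely that $u_1(\cdot)$ is monotone in $\pi$. This requires care because $J(\pi)$ may change with $\pi$, as in the empirical example. I would argue it directly from the characterization $u_1(\pi)=\inf\{x:\bar P_0(x)\ge\pi/x\}$, which avoids any dependence on how the later intervals are labeled.
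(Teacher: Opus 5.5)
Your proposal is correct and follows the same route as the paper. From $\tau=\pi^*(k_{RS}^*)+k_{RS}^*\,d(\pi^*(k_{RS}^*))$ you get $\pi^*(k_{RS}^*)\le\Pi_{RO}^*(r)$, and monotonicity of $u_1(\cdot)$ then puts the left endpoint of the RS randomized-price support at or below $u_1(\Pi_{RO}^*(r))$, where $q_{RO}^*$ vanishes. Your version is slightly more careful than the paper's. You correctly state that $u_1$ is nondecreasing in $\pi$ (the paper's proof says ``decreasing,'' a slip) and justify it by set inclusion, and you make explicit the final comparison $q_{RO}^*=0\le q_{RS}^*$ that the paper leaves implicit.
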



Importantly, our analysis reveals that the RS framework consistently provides a higher selling chance to low-valuation buyers than the RO framework, regardless of the reference distribution chosen. As an illustrative example, we compare these two mechanisms under a uniform reference distribution across various dimensions -- winning probability, payments, and buyer surplus -- in Figure \ref{Fig_opt_comparison}. We observe that, compared to the RO mechanism, the RS mechanism allocates a higher selling probability and charges higher payments to low-valuation buyers, while offering a lower selling probability and lower payments to high-valuation buyers. Interestingly, the RS mechanism results in a higher surplus for all buyers than the RO mechanism, as shown in the following theorem.

\begin{thm}\label{thm_universal-dominance-rs-ro}{\sf (Single-Crossing and Buyer Surplus Dominance)}
Assume that the revenue target satisfies $\tau=\Pi^*_{RO}(r)$ and that the reference distribution $P_0$ has an increasing hazard rate. Then, the RS mechanism provides a higher surplus for all buyers compared to the RO mechanism. Furthermore, there exist thresholds $\underline{v}_q,\underline{v}_m \in [0,1]$ such that:
\beq
 q_{RS}^*(v) \geq q_{RO}^*(v)  & \text{~if and only if~}& v \leq \underline{v}_q, \\
m_{RS}^*(v) \geq m_{RO}^*(v)  & \text{~if and only if~}& v \leq \underline{v}_m.
\eeq
\end{thm}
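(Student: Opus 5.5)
Throughout, the reference distribution $P_0$ is assumed to have an increasing hazard rate, so it is regular. By Proposition \ref{Prop_ContinuousSol}, for every revenue level $\pi\in[0,\Pi_0]$ the iso-revenue equation $x\bar P_0(x)=\pi$ then has at most two roots, so $J(\pi)=1$ with a single interval $[u(\pi),w(\pi)]$. As $\pi$ decreases, this interval widens: $u(\pi)$ decreases and $w(\pi)$ increases.

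\emph{Reduction to two mechanisms of the same shape.} Write $\pi_o:=\Pi_{RO}^*(r)=\tau$ for the RO revenue level, with interval $[u_o,w_o]$ and slope $\alpha_o=1/\ln(w_o/u_o)$, as in Lemma \ref{Lm_optimalmechanism}. Write $\pi_s:=\pi^*(k_{RS}^*)$ for the RS revenue level, with interval $[u_s,w_s]$ and slope $\alpha_s=k_{RS}^*$. By Theorem \ref{Prop_optimalPi}, $\alpha_s=1/\ln(w_s/u_s)$.

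By Theorem \ref{Thm_optimalmechanism} and Lemma \ref{Lm_optimalmechanism}, each mechanism has the form
\[
q(v)=\alpha\ln(v/u) \ \text{ on } [u,w), \qquad m(v)=\alpha(v-u) \ \text{ on } [u,w),
\]
with $q=0$ and $m=0$ below $u$, and $q=1$, $m=\alpha(w-u)$ above $w$. Since $d(\pi)\ge 0$, we have $\tau=\rho^*(k_{RS}^*)=\pi_s+k_{RS}^*d(\pi_s)\ge\pi_s$, so $\pi_s\le\pi_o$. Hence the intervals are nested, $[u_o,w_o]\subseteq[u_s,w_s]$, and consequently $\alpha_s\le\alpha_o$.

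\emph{Single crossing of $q$.} On $[0,u_o]$ we have $q_{RS}^*\ge 0=q_{RO}^*$. On $[u_o,w_o]$ the difference $q_{RS}^*-q_{RO}^*=(\alpha_s-\alpha_o)\ln v+\text{const}$ is nonincreasing. On $[w_o,w_s]$ we have $q_{RO}^*=1\ge q_{RS}^*$, and on $[w_s,1]$ both equal $1$. So $q_{RS}^*-q_{RO}^*$ changes sign at most once, from $+$ to $-$, which gives the threshold $\underline v_q$.

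\emph{Surplus.} Buyer surplus is $U(v)=\int_0^v q(x)\,\mathrm dx$, so $U_{RS}-U_{RO}$ first increases while $q_{RS}^*\ge q_{RO}^*$ and then decreases. It is therefore nonnegative on all of $[0,1]$ if and only if it is nonnegative at $v=1$. Since $U(1)=1-m(1)$, surplus dominance for all buyers reduces to the single inequality $m_{RS}^*(1)\le m_{RO}^*(1)$.

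\emph{Single crossing of $m$.} The same endpoint inequality gives the threshold for payments. On $[0,u_o]$ we have $m_{RS}^*\ge m_{RO}^*=0$. On $[u_o,w_o]$ the slope of $m_{RS}^*-m_{RO}^*$ is $\alpha_s-\alpha_o\le 0$. On $[w_o,w_s]$ the difference increases, since $m_{RO}^*$ is flat there, until it reaches $m_{RS}^*(1)-m_{RO}^*(1)$ at $w_s$. If that endpoint value is $\le 0$, the difference, once negative, stays nonpositive, which gives $\underline v_m$.

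\emph{Key inequality.} We rewrite both endpoint payments in terms of $\tau$. From \eqref{equ-min-fragility-revenue}, $\tau=k_{RS}^*\int_{u_s}^{w_s}\bar P_0$, so
\[
m_{RS}^*(1)=\tau\,\frac{w_s-u_s}{\int_{u_s}^{w_s}\bar P_0(x)\,\mathrm dx}.
\]
For RO, $\int_{u_o}^{w_o}\pi_o/x\,\mathrm dx=\pi_o\ln(w_o/u_o)=\tau\ln(w_o/u_o)$, so
\[
m_{RO}^*(1)=\tau\,\frac{w_o-u_o}{\int_{u_o}^{w_o}(\pi_o/x)\,\mathrm dx}.
\]
The inequality $m_{RS}^*(1)\le m_{RO}^*(1)$ thus becomes: the average of $\bar P_0$ over $[u_s,w_s]$ is at least the average of the iso-revenue curve $\pi_o/x$ over $[u_o,w_o]$.

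\emph{Main obstacle.} Proving this averaged inequality is where the increasing hazard rate must enter. The plan is to use log-concavity of $\bar P_0$ together with the RS first-order condition $\ln(w_s/u_s)=1/k_{RS}^*$ and the defining relation $\pi_s+k_{RS}^*d(\pi_s)=\pi_o$. The idea is to compare $\bar P_0$ with the iso-revenue curves on the nested intervals:
\begin{itemize}
\item On $[u_o,w_o]$ we have $\bar P_0\ge\pi_o/x$, which controls the inner part of the RS average.
\item On the outer pieces $[u_s,u_o]$ and $[w_o,w_s]$ we have $\pi_s/x\le\bar P_0\le\pi_o/x$. The extra mass $d(\pi_s)$ must be shown large enough, relative to the added length, to keep the RS average above the RO average.
\end{itemize}
I would first parametrize by $\pi$ and attempt to show monotonicity of $\Phi(\pi)=\frac{w(\pi)-u(\pi)}{\pi\ln(w(\pi)/u(\pi))+d(\pi)}$, differentiating via $u'(\pi)=1/(\bar P_0(u)-u p_0(u))$, and similarly for $w$, where the hazard-rate monotonicity fixes the signs. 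If that fails, I would fall back on a tangent-line bound for $\ln\bar P_0$ at $u$ and $w$.
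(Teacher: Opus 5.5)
Your reduction is correct and essentially matches the paper's. The nesting $[u_o,w_o]\subseteq[u_s,w_s]$ gives the allocation crossing. Both surplus dominance and the payment crossing then reduce to the single inequality $m_{RS}^*(1)\le m_{RO}^*(1)$; the paper writes this as $\int_0^1 q_{RS}^*\ge\int_0^1 q_{RO}^*$, which is the same statement since $U(1)=1-m(1)$. Equivalently, the logarithmic mean $(w-u)/\ln(w/u)$ of the iso-revenue pair must be at least as large at $\pi_o$ as at $\pi_s$. The paper closes this by asserting that $c\mapsto (w(c)-u(c))/\ln(w(c)/u(c))$ is nondecreasing, via implicit differentiation and Lemma \ref{lem-ifr-sensitivity}. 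Your proposal stops exactly at this step, the only place the hazard-rate hypothesis is used, and leaves it as a plan. As written, it does not prove the theorem.

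The deeper problem is that this step cannot be closed under an increasing hazard rate alone, and the theorem as stated fails. Since $u\bar P_0(u)=w\bar P_0(w)$, you have $\ln(w/u)=\int_u^w h(x)\,\mathrm{d}x$. Hence $m^*(1)=(w-u)/\int_u^w h$ is the reciprocal of the average hazard rate on the iso-revenue interval. The key inequality therefore says that widening $[u_o,w_o]$ to $[u_s,w_s]$ does not lower that average. IHR does not guarantee this, because the left extension carries the smallest hazard values.

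Concretely, let $\bar P_0=1$ on $[0,\frac{1}{2}]$, $\bar P_0(x)=e^{-3(x-1/2)}$ on $[\frac{1}{2},\frac{2}{3}]$, and $\bar P_0(x)=3e^{-1/2}(1-x)$ on $[\frac{2}{3},1]$. Its hazard rate is $0$, then $3$, then $1/(1-x)$, which is nondecreasing; also $\Pi_0=\frac{1}{2}$ and $J=1$. Take $\pi_s=0.48$. Then $u_s=0.48$, $w_s\approx0.5381$, $k_{RS}^*=1/\ln(w_s/u_s)\approx8.76$, and $\tau=k_{RS}^*\int_{u_s}^{w_s}\bar P_0\approx0.4902$. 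Setting $r=d(\tau)$ gives $\tau=\Pi^*_{RO}(r)$, with $u_o\approx0.4902$ and $w_o\approx0.5192$. The average hazard is about $1.967$ on $[u_s,w_s]$ but $1.982$ on $[u_o,w_o]$, so
\[
m_{RS}^*(1)\approx 0.5085>0.5045\approx m_{RO}^*(1).
\]
So the $v=1$ buyer is strictly worse off under RS, and $m_{RS}^*-m_{RO}^*$ changes sign twice. The gap survives a small smoothing of $h$.

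Your $\Phi$ route also fails on this example: the average of $\bar P_0$ over $[u_s,w_s]$ is about $0.964$, below the average $0.972$ of $\pi_o/x$ over $[u_o,w_o]$.

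The paper's Lemma \ref{lem-ifr-sensitivity} breaks at the same point. What is needed is an upper bound on $\mathcal{R}'_{P_0}(u)/|\mathcal{R}'_{P_0}(w)|=\kappa(1-uh(u))/(wh(w)-1)$. IHR only yields $uh(u)\le\ln\kappa/(\kappa-1)$, which bounds the numerator from the wrong side. Equality corresponds to constant hazard; here $h(u)=0$.

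So an extra hypothesis is required, for instance that $c\mapsto (w(c)-u(c))/\ln(w(c)/u(c))$ be nondecreasing on $[\pi_s,\pi_o]$. For power references $P_0(x)=x^\alpha$, your reformulation does close immediately. Every iso-revenue pair there satisfies $\int_u^w\bar P_0=\frac{\alpha}{\alpha+1}(w-u)$. So the average of $\bar P_0$ over $[u_s,w_s]$ equals its average over $[u_o,w_o]$, which dominates the average of $\pi_o/x$ on that interval.
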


Theorem \ref{thm_universal-dominance-rs-ro} provides a unified buyer-side comparison between the RS and RO mechanisms. When the reference distribution has an increasing hazard rate -- characteristic of regular distributions like uniform, normal, or exponential -- we show that single-crossing properties hold for both allocation and payment. Consequently, the RS mechanism yields a higher surplus for all buyers compared to the RO mechanism. This result highlights that different robustness paradigms can lead to systematically different buyer outcomes, even when the mechanisms belong to the same class.

To understand this result, it is useful to examine how the two mechanisms differ across valuation levels. The RS mechanism reallocates probability mass toward lower valuations, whereas the RO mechanism concentrates allocation on higher valuations. Consequently, low-valuation buyers face a higher probability of receiving the good under RS, albeit at a higher expected payment, while high-valuation buyers experience both a lower allocation probability and a lower payment. Conditional on receiving the good under both mechanisms, payments are consistently lower under RS, as shown in Figure \ref{Fig_comparison_r}. These effects generate opposing forces on buyer surplus: for low-valuation buyers, the gain from a higher allocation probability outweighs the increase in payment, leading to higher surplus; for high-valuation buyers, the reduction in allocation probability may offset the benefit of lower payments. The single-crossing condition ensures that, overall, the net effect favors the RS mechanism, particularly for buyers with lower valuations.

Notably, the increasing hazard rate condition is sufficient but not necessary for the surplus dominance result. As shown in Lemma \ref{lem-ifr-sensitivity}, the key requirement can be relaxed to a more general condition requiring that the revenue function is more sensitive to changes in higher posted prices than in lower ones along iso-revenue curves. This condition admits a clear economic interpretation. It implies that marginal adjustments to higher prices have a disproportionately larger impact on revenue than comparable adjustments at lower prices. As a result, the RS mechanism that shift allocation toward lower valuations can improve buyer surplus without significantly weakening revenue robustness. Importantly, this insight extends well beyond the class of distributions with increasing hazard rates. Figure \ref{Fig_opt_comparison_pareto} in the Appendix shows that even for regular distributions with non-monotone hazard rates (e.g., truncated Pareto), the single-crossing structure and the resulting surplus dominance continue to hold. Moreover, Figure \ref{Fig_opt_comparison_irregular} demonstrates that this dominance persists even under irregular reference distributions, where standard regularity conditions fail. Taken together, these results indicate that the surplus advantage of the RS mechanism is driven by a more fundamental structural feature-its systematic reallocation toward lower valuations-rather than by specific distributional assumptions. Consequently, the RS framework yields higher buyer surplus across a broad class of economically relevant environments, particularly benefiting low-valuation buyers. The impact of these mechanisms on individual buyers is further illustrated in Example \ref{Exap_5}.

\begin{exap}\label{Exap_5}{\sf(RS vs RO Mechanisms)}
Continuing with Example \ref{Exap_2}, we derive the optimal RO mechanism corresponding to the ambiguity size that achieves the same target $\tau = \Pi_{RO}^*(r)$. Table \ref{table-example-rs-vs-ro} summarizes the impacts of three mechanisms: the PP mechanism under the RS framework, the optimal RS mechanism, and the optimal RO mechanism, on buyers with various valuations. The analysis reveals a distinct advantage of the optimal RS mechanism. Firstly, it sells the item to the low-valuation buyer with a valuation of \$0.25, offering a 30.7\% chance of acquisition, whereas the PP and optimal RO mechanisms offer no such chance. Secondly, for buyers with a positive chance of acquisition across all mechanisms, the optimal RS mechanism charges the lowest expected payment -- 50\% (or 15\%) less than the PP mechanism and 13\% (or 26\%) less than the optimal RO mechanism for the medium-valuation (or high-valuation) buyers. Thirdly, the optimal RS mechanism provides the highest surplus for all buyers.
\begin{table}
\vspace{-0.2in}
\TABLE
{An Illustration Example for RS vs RO Mechanisms: Winning Probability, Payment, Buyer Surplus\label{table-example-rs-vs-ro}}
{
\begin{tabular}{c ccc  ccc   ccc}
\toprule
& \multicolumn{3}{c}{PP mechanism (RS framework)} & \multicolumn{3}{c}{Optimal RS mechanism} & \multicolumn{3}{c}{Optimal RO mechanism}\\
\cmidrule(lr){2-4} \cmidrule(lr){5-7} \cmidrule(lr){8-10}
\makecell{Buyer\\ valuation} & \makecell{Winning\\ prob.} & \makecell{Expected\\ payment} & \makecell{Buyer\\ surplus} &\makecell{Winning\\ prob.} & \makecell{Expected\\ payment} & \makecell{Buyer\\ surplus} & \makecell{Winning\\ prob.} & \makecell{Expected\\ payment} & \makecell{Buyer\\ surplus}\\
\hline
\$ 0.25 & 0 & \$ 0.0 & \$ 0.0 & 0.307 & \$ 0.06 & \$ 0.02  & 0 & \$ 0.0 & \$ 0.0\\
\$ 0.5 & 1 & \$ 0.4 & \$ 0.1 & 0.698 & \$ 0.20 & \$ 0.15 & 0.616 & \$ 0.23 & \$ 0.08 \\
\$ 0.75  & 1 & \$ 0.4 & \$ 0.35 & 0.926 & \$ 0.34 & \$ 0.35 & 1 & \$ 0.46 & \$ 0.29\\
\bottomrule
\end{tabular}}
\noindent{\footnotesize\textit{Note.} In this table, with a uniform reference distribution and $\tau=0.2$, the in-sample worst-case expected revenues are $0.12$ for the optimal RS mechanism, $0.16$ for the PP mechanism under the RS framework, and $0.2$ for the optimal RO mechanism.}
\vspace{-0.1in}
\end{table}
\end{exap}


Next, we investigate how the effectiveness of the PP mechanism changes from the RS framework to the RO framework. As demonstrated in Section \ref{sec-rs-pp-effectiveness}, under the RS framework, the PP mechanism with a deterministic posted price can yield a higher \emph{out-of-sample} revenue than the optimal RS mechanism with randomized prices, particularly at low revenue targets when valuations are positively skewed. We seek to explore how these insights regarding the out-of-sample performance of PP mechanism changes under the RO framework. Analogous to the RS framework, we define the out-of-sample performance ratio under the RO framework as $\eta_{RO}:= \frac{\mathbb{E}_{P}[p_{RO}^{PP}\mathbbm{1}(\tilde{v}\geq p_{RO}^{PP})]}{\mathbb{E}_P[m_{RO}^*(\tilde{v})]}$, which represents revenue achieved under the PP mechanism relative to the optimal RO mechanism, both derived using a uniform reference distribution and evaluated at the true valuation distribution $P$.

\begin{prop}\label{Prop_ComparisonStatistics}{\sf (Statistical Properties of Randomized Prices)}
Assuming $\tau = \Pi_{RO}^*(r)$ and a uniform reference distribution, it follows that $\mathbb{E}_{q_{RO}^*}[\tilde{p}] >  \mathbb{E}_{q_{RS}^*}[\tilde{p}] = p_{RS}^{PP}$ and $\mathbb{E}_{q_{RO}^*}[\tilde{p}] > p_{RO}^{PP}$.
\end{prop}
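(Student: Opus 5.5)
The plan is to reduce both inequalities to one-variable inequalities in the single parameter $s:=\sqrt{1-4\tau}\in(0,1)$, using the closed forms already available for a uniform reference distribution.

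\textbf{Expressing everything in $s$.} Under $\tau=\Pi_{RO}^*(r)$, Lemma \ref{Lm_optimalmechanism} gives $J=1$ and $u_1=(1-s)/2$, $w_1=(1+s)/2$. It also gives the normalizing constant $\alpha=1/L$ with $L:=\ln\frac{1+s}{1-s}$, and the radius $r=\frac{s}{2}-\tau L$ with $\tau=(1-s^2)/4$. The RO randomized price has density $\alpha/v$ on $[u_1,w_1)$, so
\[
\mathbb{E}_{q_{RO}^*}[\tilde p]=\alpha\int_{u_1}^{w_1} v\cdot\frac{1}{v}\,\mathrm{d}v=\frac{w_1-u_1}{L}=\frac{s}{L}.
\]
For the RS side I would simply invoke Proposition \ref{prop-statistics-optRS}, which gives $\mathbb{E}_{q_{RS}^*}[\tilde p]=p_{RS}^{PP}=2\tau=(1-s^2)/2$. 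That disposes of the equality in the statement.

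\textbf{First inequality.} The claim $\frac{s}{L}>\frac{1-s^2}{2}$ is equivalent to $\frac{2s}{1-s^2}>\ln\frac{1+s}{1-s}=2\,\mathrm{artanh}(s)$. I would prove this by comparing power series on $(0,1)$:
\[
\frac{s}{1-s^2}=\sum_{n\ge0}s^{2n+1}, \qquad \mathrm{artanh}(s)=\sum_{n\ge0}\frac{s^{2n+1}}{2n+1}.
\]
For $n\ge1$ each coefficient on the left strictly exceeds the one on the right, so the inequality is strict. As a backup, one can set $g(s)=\frac{2s}{1-s^2}-L$, note $g(0)=0$, and compute $g'(s)=\frac{4s^2}{(1-s^2)^2}>0$.

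\textbf{Second inequality.} The claim is $\frac{s}{L}>\frac{1-\sqrt{2r}}{2}$, with $2r=s-\frac{(1-s^2)L}{2}$. I would rewrite it as $\sqrt{2r}>1-\frac{2s}{L}$.
\begin{itemize}
\item If the right-hand side is non-positive there is nothing to prove, since $r\ge0$ by Lemma \ref{Lm_chen}.
\item Otherwise I square both sides and must show
\[
h(s):=s-\frac{(1-s^2)L}{2}-\Big(1-\frac{2s}{L}\Big)^2>0
\]
on the set where $L>2s$, which in fact holds for all $s\in(0,1)$.
\end{itemize}
I expect this step to be the main obstacle, because $h$ mixes $L$ and $1/L$. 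My first attempt would be to substitute $s=\tanh t$, so that $L=2t$, and clear denominators by multiplying through by $t^2\cosh^2 t$. This should produce an inequality among $t$, $\sinh$ and $\cosh$ that can be checked by a Taylor expansion near $t=0$ together with a monotonicity or derivative argument for larger $t$. The endpoint limits give reassurance: as $s\to0$, $\frac{s}{L}\to\frac12$ while $p_{RO}^{PP}\to\frac12-\sqrt{r}/\sqrt2$ with $r>0$; as $s\to1$, $L\to\infty$ while $p_{RO}^{PP}$ stays bounded away from $0$ only if $r$ is small. So I would examine the regime where $r$ is small most carefully and, if the direct approach stalls, split $(0,1)$ into two subintervals.
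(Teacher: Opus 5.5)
Your treatment of the equality and of the first inequality is correct, but it takes a different route from the paper's. The paper writes both mean prices through the single function $c\mapsto \sqrt{1-4c}\,/\ln\frac{1+\sqrt{1-4c}}{1-\sqrt{1-4c}}$, evaluated at $c=\tau$ for RO and at $c=\pi^*(k_{RS}^*)$ for RS. It then concludes from $\pi^*(k_{RS}^*)<\tau$ (because $\tau=\pi^*+k_{RS}^*d(\pi^*)$ with $d(\pi^*)>0$) together with a monotonicity of that function, which it does not prove. You instead invoke $\mathbb{E}_{q_{RS}^*}[\tilde p]=2\tau$ from Proposition \ref{prop-statistics-optRS}. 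Both sides then live at the single level $c=\tau$, the RS intersection points never appear, and the claim reduces to $\frac{2s}{1-s^2}>L$. Your series comparison settles this, as does $g'(s)=4s^2/(1-s^2)^2$. In mean language, the step says the logarithmic mean $\frac{w_1-u_1}{\ln(w_1/u_1)}$ exceeds the harmonic mean $2u_1w_1=2\tau$.

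The second inequality, however, is not proved. Your reduction to $h(s)>0$ is the paper's own reduction in another variable: its $f(t)<0$ with $t=w_1/u_1=e^{L}$ is the same inequality. Squaring is legitimate because $L>2s$. But the sign of $h$ is the entire content of this part, and you leave it as a plan; the paper likewise only asserts that $f$ is decreasing.

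Your endpoint heuristics are also reversed. As $s\to0$ one has $r\to0$ too, both sides tend to $\frac12$, and $h(s)=\frac23 s^3+O(s^4)$, so this is the tight end. As $s\to1$, by contrast, $r\to\mu_0=\frac12$ and $p_{RO}^{PP}\to0$ much faster than $s/L$, so that end is easy. After clearing denominators the expression vanishes to fifth order at the origin, so a single monotonicity step cannot work.

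One way to finish is as follows. With $z:=L$ one has $h(s)=-\phi(z)/(2z^2\cosh^2(z/2))$, where
\[ \phi(z)=z^3+z^2+z^2e^{-z}-4z\sinh z+4\cosh z-4,\quad \phi'(z)=z\,\psi(z),\quad \psi(z)=3z+2+(2-z)e^{-z}-4\cosh z. \]
Here $\psi(0)=\psi'(0)=\psi''(0)=0$ and $\psi'''(z)=e^{-z}(z-3-2e^{2z})<0$. Integrating back gives $\psi<0$, then $\phi<0$, hence $h>0$ on all of $(0,1)$.

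A shorter alternative proves both inequalities at once by interposing the geometric mean $\sqrt{u_1w_1}=\sqrt{\tau}$:
\begin{itemize}
\item The RO mean price is the logarithmic mean of $u_1,w_1$, so it exceeds $\sqrt{\tau}$; with $w_1/u_1=e^{2x}$ this is just $\sinh x>x$.
\item For the first inequality, $\sqrt\tau\ge 2\tau$ holds because $\tau\le\frac14$.
\item For the second, $p_{RO}^{PP}\le\sqrt\tau$ is equivalent to $2r=w_1^2-u_1^2-2u_1w_1\ln(w_1/u_1)\ge(\sqrt{w_1}-\sqrt{u_1})^4$. After dividing by $u_1w_1$ this reads $4\cosh x-2x-3-e^{-2x}\ge0$. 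It holds because this function and its first derivative vanish at $0$, and its second derivative $4\cosh x-4e^{-2x}$ is positive for $x>0$.
\end{itemize}
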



Importantly, our analysis reveals the following key differences between the RO and RS frameworks. First, we compare randomized prices associated with the optimal mechanism with the deterministic price for the PP mechanism in both robust frameworks. We show that, while the randomization does not change the (mean) posted price in the RS framework, it increases the posted price in the RO framework, as formally stated in Proposition \ref{Prop_ComparisonStatistics} and numerically depicted in the left panel of Figure \ref{Fig_ComparisonStatistics}. Second, we examine how the out-of-sample performance ratio of the PP mechanism changes from the RS to RO framework, focusing on Beta distributions as true valuation distribution, as outlined in Figure \ref{Fig_ComparisonStatistics}. While the PP mechanism tends to be more effective when valuations exhibit a lower variability or higher skewness in the RS framework, per discussed in Section \ref{sec-rs-pp-effectiveness}, its effectiveness is marginally affected by these factors in the RO framework. 

\begin{figure}[!htb]
    \vspace{-0.1in}
    \FIGURE
    {
        \includegraphics[width=0.33\textwidth]{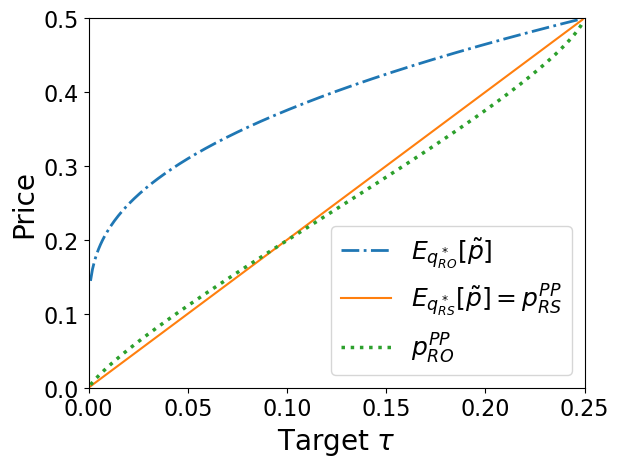}
        \includegraphics[width=0.33\textwidth]{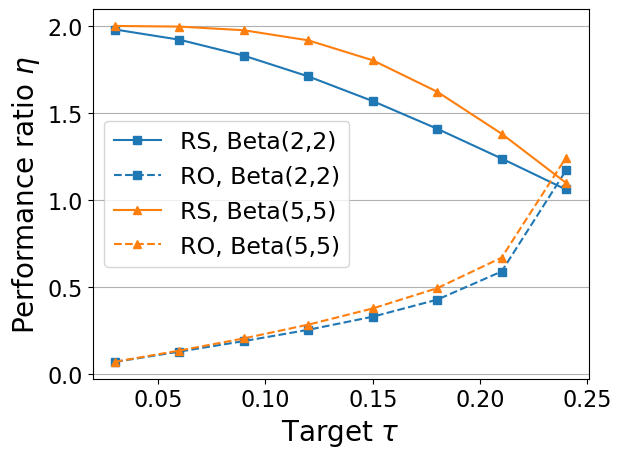} 
        \includegraphics[width=0.33\textwidth]{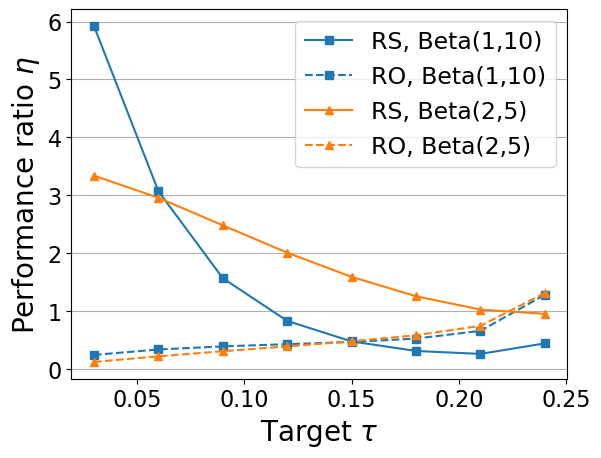}
    }
    {Effectiveness of the PP Mechanism: Prices and Out-of-Sample Performance Ratio under RS vs RO \label{Fig_ComparisonStatistics}}
    {}
    \vspace{-0.2in}
\end{figure}

Interestingly, the out-of-sample performance of the PP mechanism is notably more effective within the RS framework at lower targets, while it performs better in the RO framework at higher targets. In the RO framework, a higher target reduces the radius of the ambiguity set, as it aligns with the optimal worst-case revenue, which decreases with the size of the ambiguity (i.e., $\tau=\Pi_{RO}^*(r)$ decreases as $r$, as shown in Figure \ref{Fig_comparison_r}). This increase in target heightens the chance of excluding the true valuation distribution $P$ from the ambiguity set, thereby reducing the out-of-sample performance of the optimal randomized mechanism derived in the RO framework. As a result, the PP mechanism with a deterministic posted price becomes more effective than the randomized mechanism in this context. These findings indicate that the RO and RS frameworks can complement each other when applying the simple PP mechanism: the RS framework is more advantageous at lower targets, representing higher ambiguity and less information, whereas the RO framework thrives at higher targets, signifying lower ambiguity and sufficient information.


Finally, we compare the \emph{out-of-sample} expected revenues from the optimal RO and RS mechanisms, derived from a uniform reference distribution and evaluated under true valuation distributions following Beta($\alpha$, $\beta$). We remark that the target is set at the maximum worst-case revenue, which is clearly lower than the equivalent target specified in \eqref{equ-equivalence-condition}, namely $\tau = \Pi_{RO}^*(r) \le \tau^{\sf Equiv}(r)$ for every $r\ge 0$. With a regular reference distribution, as outlined in Theorem \ref{Prop_optimalPi} and Proposition \ref{Prop_ContinuousSol}, the RS mechanism's in-sample expected revenue increases with the target, suggesting that the RS mechanism achieves lower in-sample revenue compared to the RO mechanism. However, for out-of-sample revenue, as shown in Figure \ref{Fig_heatmap_RS_RO} and Figure \ref{Fig_RevenueImprovement}, the RS framework is more favorable when the target is high and valuations exhibit positive skewness. Conversely, the RO framework performs better at lower targets or with less skewness. The intuition behind this is that a higher target reduces the RO framework's effectiveness due to the increased risk of the true distribution falling outside the ambiguity set, while higher positive skewness enhances the RS framework's performance by protecting the increased mass of low-valuation buyers. Out-of-sample comparison of the PP mechanisms under the RS and RO mechanisms is illustrated in Figures \ref{Fig_improvement_PP} and \ref{Fig_heatma_PP}. It once again highlights that when the true valuation distribution is skewed toward lower valuations, the RS framework outperforms in seller's revenue.

\begin{figure}[!ht]
\vspace{-0.1in}
    \centering
    \FIGURE
    {
        \subfigure[~$\beta =5$]{\includegraphics[width=0.33\textwidth]{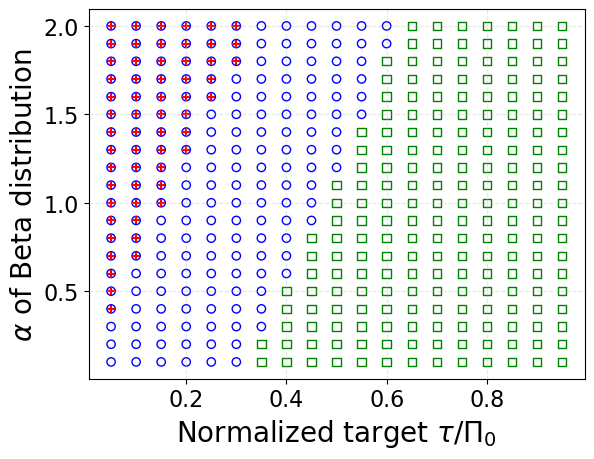}}
        \subfigure[~Low target $\tau/\Pi_0 =0.1$]{\includegraphics[width=0.33\textwidth]{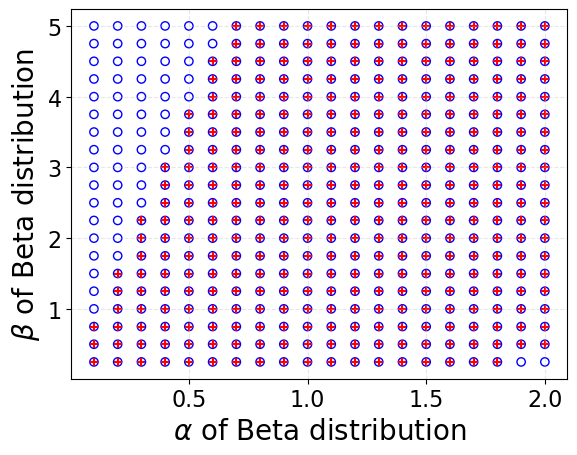}}
        \subfigure[~High target $\tau/\Pi_0 =0.9$]{\includegraphics[width=0.33\textwidth]{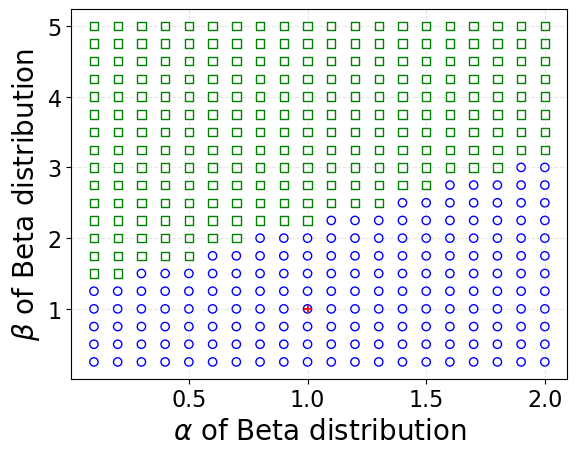}}
    }
    {Seller's Out-of-Sample Preference with Beta$(\alpha,\beta)$ Valuation Distribution: RS vs RO Mechanisms \label{Fig_heatmap_RS_RO}}
    {This figure compares the seller's out-of-sample expected revenue for the RS and RO mechanisms, based on a uniform reference distribution and evaluated using the true Beta$(\alpha,\beta)$ valuation distribution. For any target $\tau$, the ambiguity size $r$ is chosen as the unique solution to $\tau = \Pi_{RO}^*(r)$. Blue circles ``{\color{blue}$\circ$}" represent higher revenue for the RO mechanism, green squares ``{\color{green}$\square$}" for the RS mechanism, and red pluses ``{\color{red}$+$}" indicate that the Beta distribution falls within the ambiguity set of the RO problem.}
\vspace{-0.1in}
\end{figure}

\section{Concluding Remarks}\label{sec-conclude}
This study examines mechanism design under distributional uncertainty through the robust satisficing (RS) framework. Unlike the traditional robust optimization (RO) paradigm, which concentrates on worst-case scenarios within a predefined ambiguity set, the RS approach focuses on minimizing the worst-case shortfall relative to a target revenue level. This shift not only facilitates analytically tractable solutions but also leads to markedly different economic outcomes.

Our findings indicate that the optimal RS mechanism can be implemented as a randomized pricing scheme with a distinctive logarithmic structure. This approach reallocates resources toward lower-valuation buyers, contrasting with RO mechanisms that typically prioritize higher-valuation buyers to mitigate worst-case risks. Consequently, the choice between RS and RO frameworks significantly influences allocation patterns and surplus distribution among buyers.
In comparing the optimal RS mechanism with deterministic posted pricing, we quantify the benefits of randomization. While deterministic pricing is appreciated for its simplicity and transparency, it typically lacks the robustness, flexibility, and precision in allocation that the RS framework and its randomized mechanisms provide. However, in scenarios where targets are low and the valuation distribution exhibits positive skewness, posted pricing can still perform effectively in out-of-sample situations, highlighting a trade-off between ease of implementation and performance effectiveness.

Our research provides valuable managerial insights for designing selling mechanisms under model uncertainties. From a buyer's perspective, the RS mechanism often results in higher surplus compared to the RO mechanism, even across varied environments. While conditions such as increasing hazard rates support this superiority, evidence indicates that RS has distinct benefits in more complex scenarios, including those with irregular distributions.
From a seller's perspective, the comparison of revenue performance between RS and RO is context-dependent. Numerical experiments show that RS can outperform RO in out-of-sample revenue, particularly when valuation distributions are positively skewed or deviate from predefined ambiguity sets. This underscores a trade-off between protection and adaptability: RO offers robustness against known distributions, while RS is more responsive to actual demand conditions, focusing on meeting revenue targets.
For practitioners and policymakers, these insights suggest that the RS framework is particularly well-suited to environments with ample demand data, targeted performance objectives, and diverse customer segments. By avoiding the need for precise ambiguity-set calibration in favor of clear revenue targets, the RS framework provides a practical and transparent strategy for decision-making under uncertainty.

Future research could explore several important avenues: extending the RS framework to multi-buyer or multi-item contexts to enhance market design insights, integrating dynamic learning or data-driven updates for the reference distribution to blend robustness with adaptability, and applying RS principles to other operational areas like inventory management or platform design to expand its relevance and utility.



{
\footnotesize
\theendnotes
}




\bibliographystyle{informs2014}
\bibliography{robust-mechanism}

%
%
%

\newpage
\begin{APPENDICES}

%

\newpage
\setcounter{page}{1}

\setcounter{section}{0}
\renewcommand{\thesection}{\Alph{section}}

\renewcommand{\theequation}{A-\arabic{equation}}
\setcounter{equation}{0}

\renewcommand{\thethm}{A-\arabic{thm}}
\renewcommand{\thelem}{A-\arabic{lem}}
\renewcommand{\theprop}{A-\arabic{prop}}
\renewcommand{\thecor}{A-\arabic{cor}}
\renewcommand{\thedefi}{A-\arabic{defi}}
\renewcommand{\thefigure}{A-\arabic{figure}}
\renewcommand{\thetable}{A-\arabic{table}}
\setcounter{thm}{0}
\setcounter{lem}{0}
\setcounter{prop}{0}
\setcounter{cor}{0}
\setcounter{defi}{0}
\setcounter{figure}{0}
\setcounter{table}{0}

\begin{center}
	{\large Online Appendix \\
		\vspace{.1in}
		\Large \PaperTitle
		\vspace{.1in}
	}
\end{center}



This appendix first provides supplemental examples and numerical analyses in Section \ref{appendix:numerical}, followed by all technical proofs in Section \ref{sec-appendix-proofs}.


\section{Supplemental Examples and Results}\label{appendix:numerical}

\subsection{Characteristics of the Fragility-Adjusted Revenue Functions}

Figure \ref{Fig_fragility_revenue} illustrates that, for any given $k$ in the constraint of the satisficing problem \eqref{Eq_optimizationPi}, the fragility-adjusted revenue function $\rho(\pi, k)$ is convex in $\pi$ with a unique minimum $\pi^*(k)$. And $\rho^*(k)$ is a monotonically increasing function of $k$. 

Figure \ref{fig-fragility-adj-rev-pp-empirical} characterizes a similar pattern of $\rho^{PP*}(k)$ as an increasing function of $k$ in solving the optimal posted pricing mechanism under the empirical reference distribution introduced in Example \ref{exap-empirical-RS-opt}. 

\begin{figure}[!htb]
    \vspace{-0.1in}
    \FIGURE
    {
	\begin{minipage}{0.96\linewidth}
        \includegraphics[width=  0.48\linewidth]{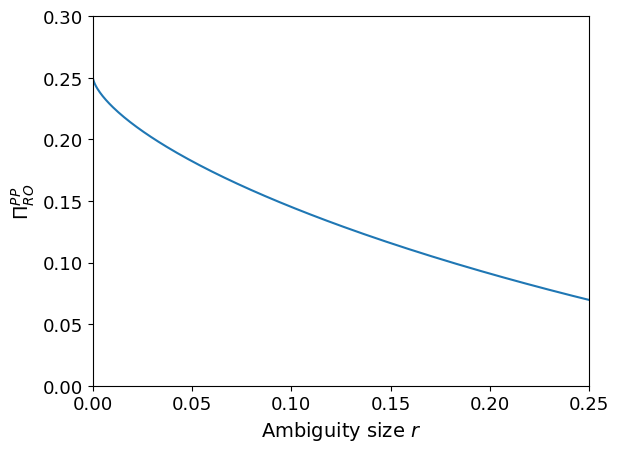}
        \includegraphics[width=  0.48\linewidth]{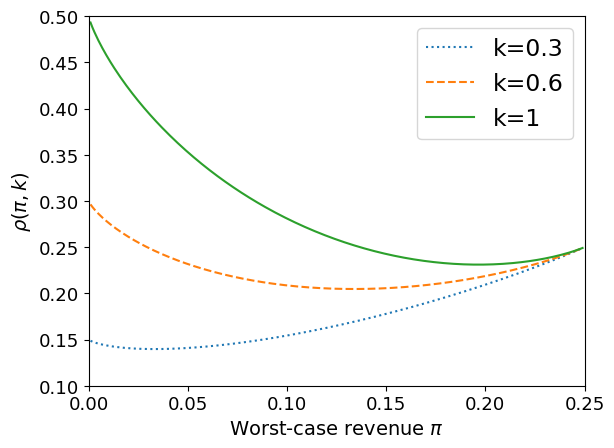}
        \vfill
        \includegraphics[width=  0.48\linewidth]{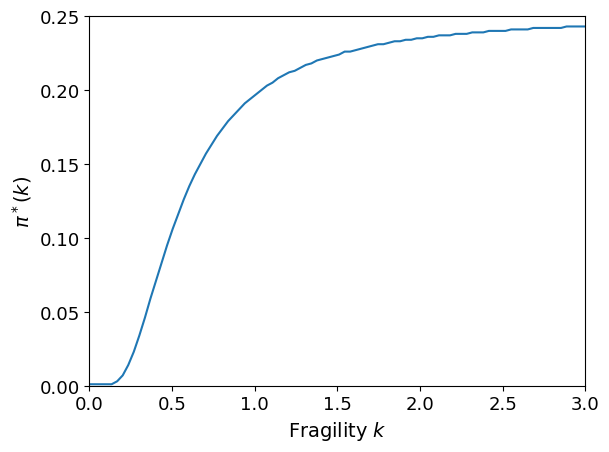}
        \includegraphics[width=  0.48\linewidth]{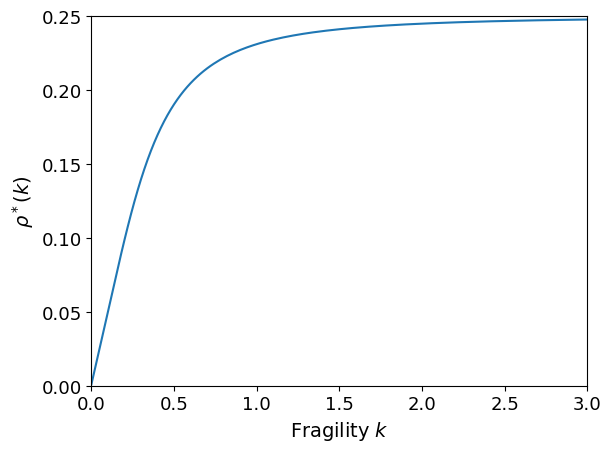}
    \end{minipage}  
    }
    {Fragility-adjusted Revenue and Its Optimal Solution under a Uniform Reference Distribution \label{Fig_fragility_revenue}}
    {}
    \vspace{-0.2in}
\end{figure}

\begin{figure}[!htb]
    \vspace{-0.1in}
    \FIGURE
    {
        \includegraphics[width=0.45\textwidth]{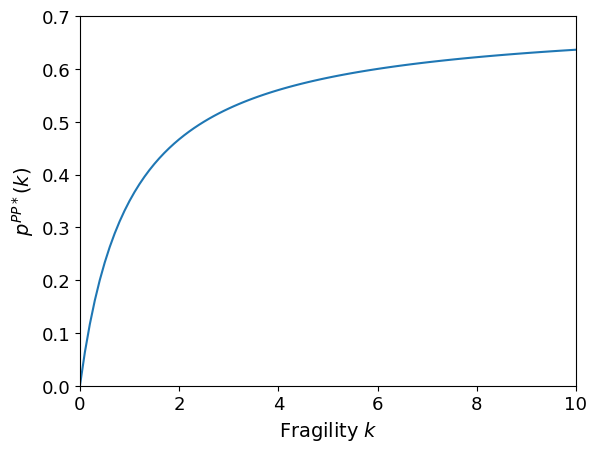} 
        \includegraphics[width=0.45\textwidth]{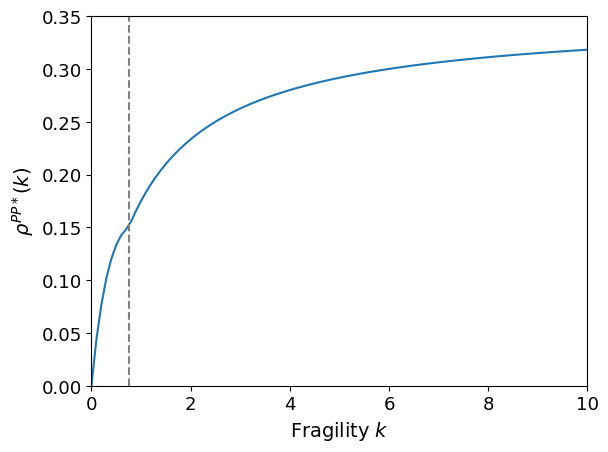}
    }
    {Illustration of $p^{PP*}(k)$ And $\rho^{PP*}(k)$ under an Empirical Reference Distribution $\{(0.3,0.5),(0.7,0.5)\}$ \label{fig-fragility-adj-rev-pp-empirical}}
    {}
    \vspace{-0.2in}
\end{figure}

\subsection{Out-of-Sample Performance under the RS Framework: PP vs Opt Mechanisms}

Figure \ref{Fig_Beta_PDF} shows the probability distribution functions of the beta distributions that are used in the main paper. 
Figure \ref{Fig_optvspp_rs_comparison} further analyzes how key factors influence the seller's preference between PP and optimal mechanisms. Consistent with previous discussions, the posted pricing mechanism generates higher seller revenue when the seller targets lower, less aggressive revenue goals. Conversely, with higher, more aggressive revenue targets, the optimal mechanism proves more beneficial, particularly when the shape parameter $\alpha$ is small or $\beta$ is large. This is because a smaller $\alpha$ or larger $\beta$ skews the valuation distribution positively, increasing the probability mass of low valuations. This substantially reduces the seller's revenue under the posted pricing mechanism by decreasing the likelihood of sales, whereas the randomized pricing strategy of the optimal mechanism is less impacted. These findings suggest the deterministic PP mechanism is most effective for low revenue targets with low variability or positive skewness in valuations but may lead to significant losses compared to the randomized mechanism at high revenue targets with positive skewness.

\begin{figure}[!ht]
\centering
\vspace{-0.1in}
    \caption{The PDF of Beta distribution: \label{Fig_Beta_PDF}}
    \includegraphics[width=  0.4\linewidth]{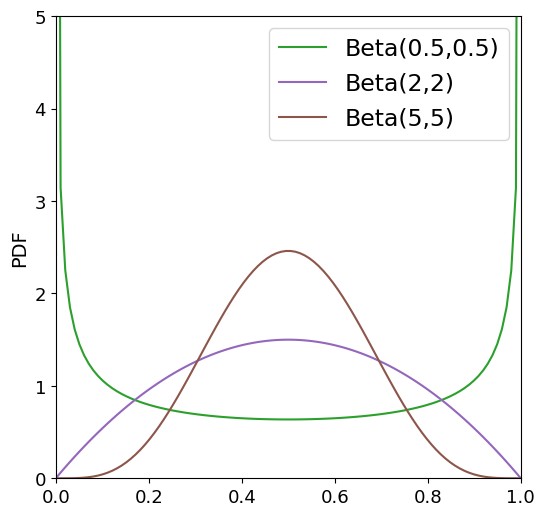}
    \includegraphics[width=  0.4\linewidth]{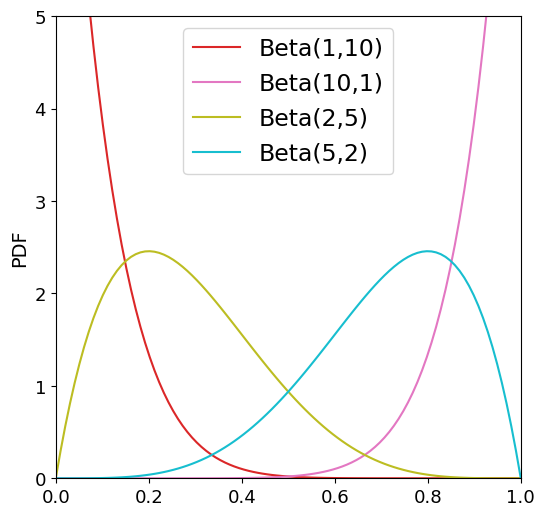}
\vspace{-0.2in}
\end{figure}

\begin{figure}[!ht]
\vspace{-0.1in}
    \centering
    \FIGURE
    {
        \subfigure[~$\beta =5$]{\includegraphics[width=0.33\textwidth]{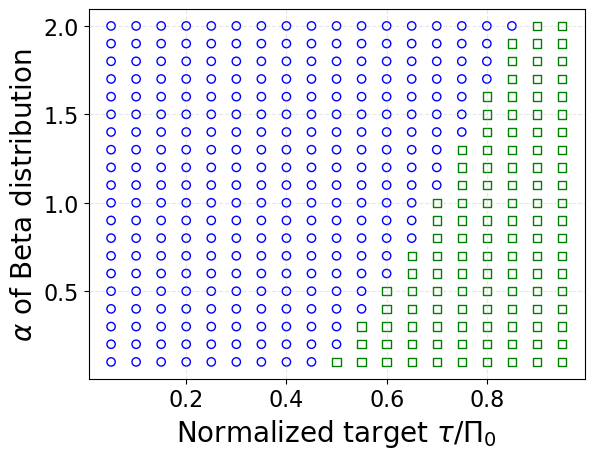}}
        \subfigure[~Low target $\tau/\Pi_0 =0.1$]{\includegraphics[width=0.33\textwidth]{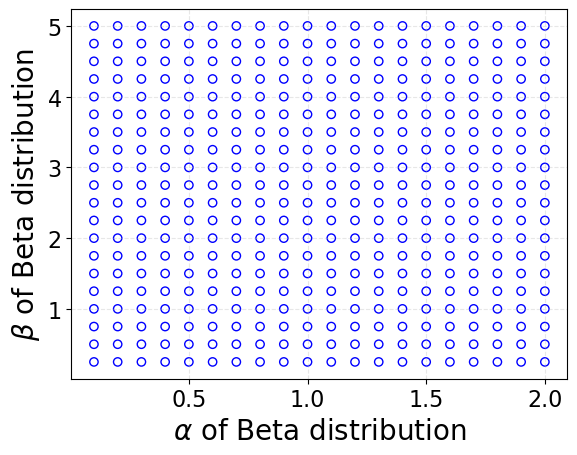}}
        \subfigure[~High target $\tau/\Pi_0 =0.9$]{\includegraphics[width=0.33\textwidth]{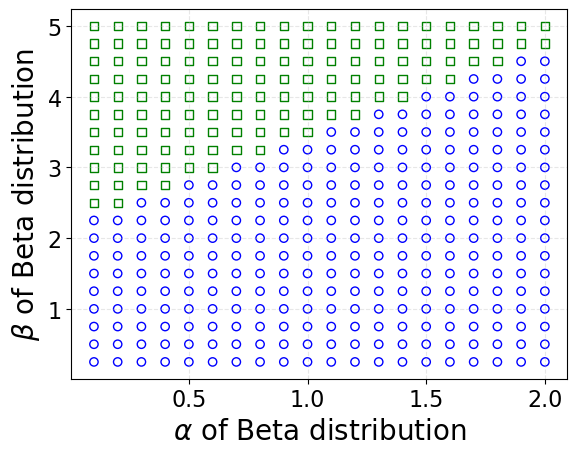}}
    }
    {Seller's Out-of-Sample Preference with Beta$(\alpha,\beta)$ Valuation Distribution under the RS Framework: PP vs Opt Mechanisms \label{Fig_optvspp_rs_comparison}}
    {This figure compares the seller's out-of-sample expected revenue between the PP and Opt mechanisms, derived from a uniform reference distribution and evaluated using the Beta$(\alpha,\beta)$ valuation distribution. Blue circles ``{\color{blue}$\circ$}" indicate higher revenue for the PP mechanism, while green squares ``{\color{green}$\square$}" indicate higher revenue for the Opt mechanism.  
    }
\end{figure}

\subsection{Supplemental Numerical Analysis: Ambiguity Radius and Conditional Payment}

Figure~\ref{Fig_comparison_r} compares the mechanisms induced by RS and RO under a uniform reference distribution. Panel (a) plots the ambiguity radius required to achieve a given target $\tau$ under the two frameworks. For the same target level, the RS framework corresponds to a larger ambiguity radius, indicating that it achieves the target performance while accommodating a wider range of distributions. 
Panel (b) compares the payment conditional on winning across valuations. The RS mechanism charges uniformly lower prices for all buyers who receive the good with positive probability, highlighting its less aggressive extraction relative to the RO mechanism.

\begin{figure}[!ht]
\centering
\vspace{-0.1in}
    \caption{Comparison between the RS and RO Frameworks with a Uniform Reference Distribution \label{Fig_comparison_r}}
    \subfigure[~Radius vs Target]{\includegraphics[width=0.45\linewidth]{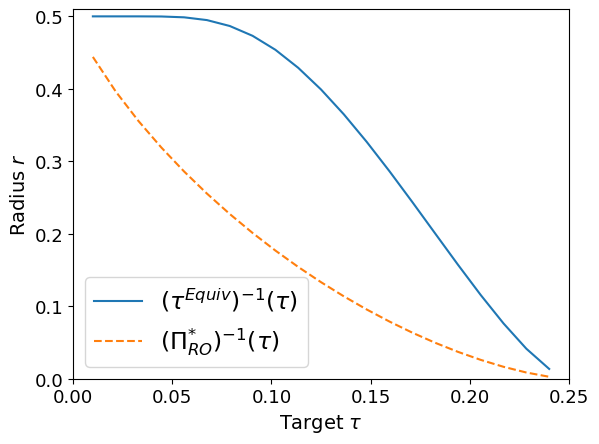}}
    \subfigure[~Payment conditional on winning]{\includegraphics[width=  0.45\linewidth]{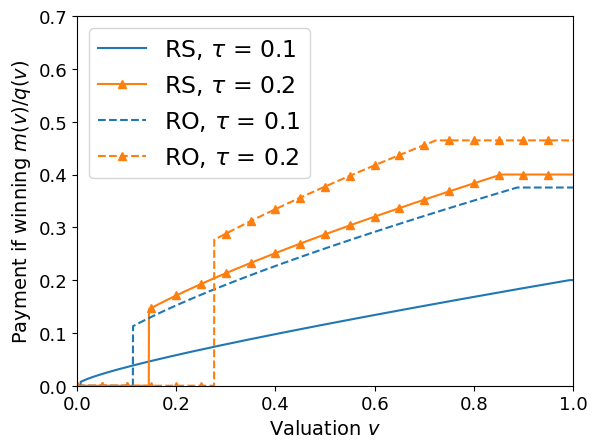}}
\vspace{-0.1in}
\end{figure}

\subsection{Optimal RS vs RO Mechanisms with a Regular, Non-MHR Reference Distribution}

Figure \ref{Fig_opt_comparison_pareto} shows the comparison of the RS and RO mechanisms with a truncated Pareto distribution that has CDF $\frac87\left(1-\left(1+x\right)^{-3}\right)$ on $[0,1]$. The distribution has an increasing virtual value function $\frac{(1+x)^4+16x-8}{24}$, confirming its regularity, and a non-monotone hazard rate $\frac{24}{8(1+x)-(1+x)^4}$. The figure reveals that the single-crossing property of the expected payment still holds, and the RS mechanism dominates in terms of the buyer surplus.

\begin{figure}[!htb]
    \vspace{-0.1in}
    \FIGURE
    {
        \includegraphics[width=0.33\textwidth]{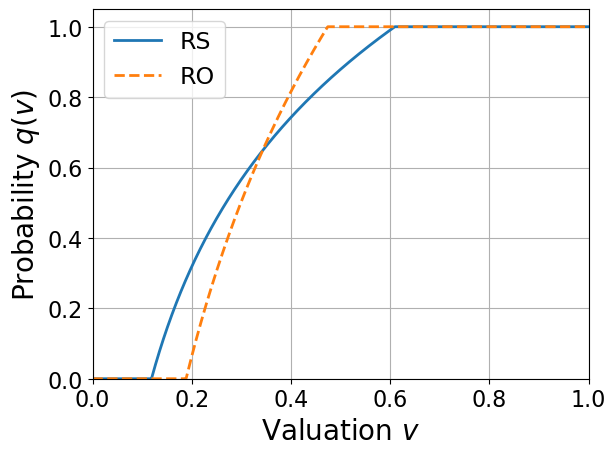}
        \includegraphics[width=0.33\textwidth]{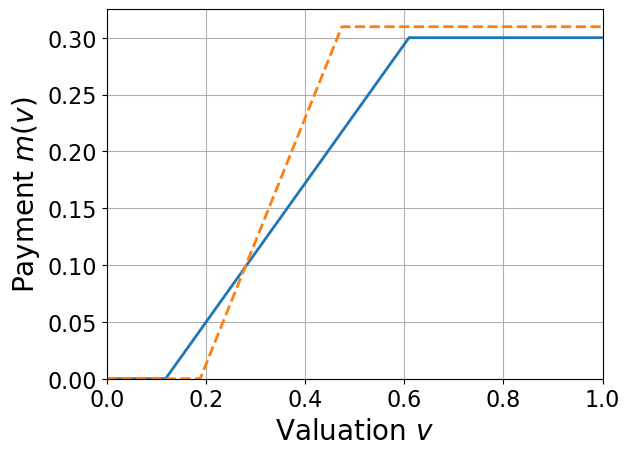} 
        \includegraphics[width=0.33\textwidth]{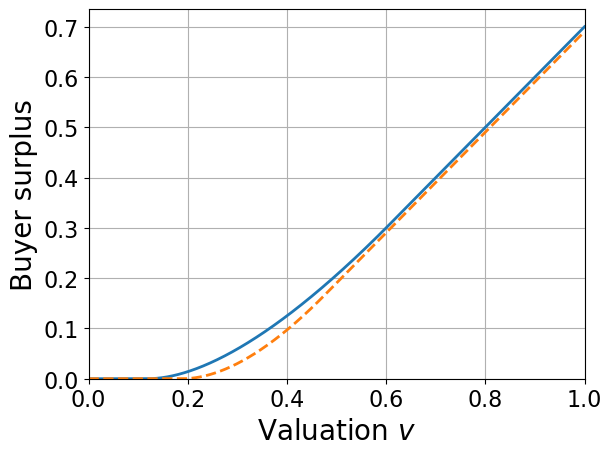}
    }
    {RS vs RO Mechanisms under a Pareto Reference Distribution \label{Fig_opt_comparison_pareto}}
    {The reference is a Pareto distribution $P_0(x) = \frac87\left(1-\left(1+x\right)^{-3}\right)$ for all $x\in[0,1]$, which is regular but with a non-monotone hazard rate. The revenue target is $\tau = 0.1$.}
    \vspace{-0.2in}
\end{figure}


\subsection{Optimal RS vs RO Mechanisms with an Irregular Reference Distribution}

Figure \ref{Fig_opt_comparison_irregular} compares the RS and RO mechanisms for the irregular reference distribution defined as a mixture $0.9\cdot\text{Beta}(10,2)+0.1\cdot\text{Beta}(2,10)$, with a revenue target $\tau=0.24$. The figure comprises the allocation probability $q(v)$, the expected payment $m(v)$ and the buyer surplus $v q(v)-m(v)$ as a function of valuation $v$. Across all panels, the RS mechanism (solid line) tends to offer higher allocation probabilities and payments for low-valuation buyers, while the RO mechanism (dashed line) favors high-valuation buyers. The buyer surplus under the RS mechanism remains dominant, demonstrating the robustness of the RS approach even when the reference distribution is irregular. 

\begin{figure}[!htb]
    \vspace{-0.1in}
    \FIGURE
    {
        \includegraphics[width=0.33\textwidth]{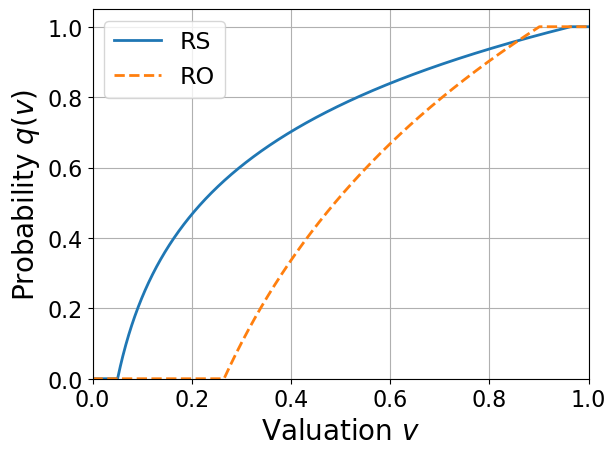}
        \includegraphics[width=0.33\textwidth]{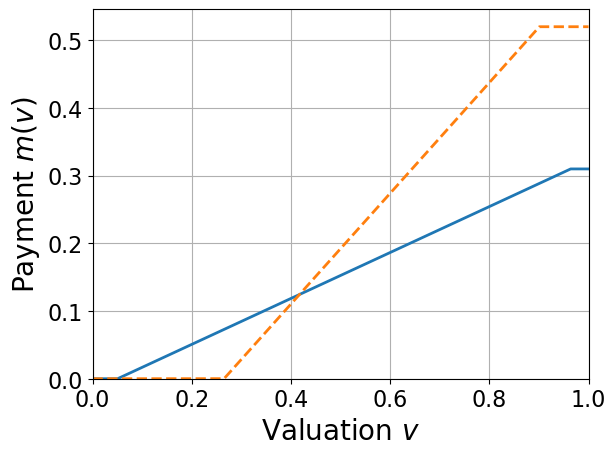} 
        \includegraphics[width=0.33\textwidth]{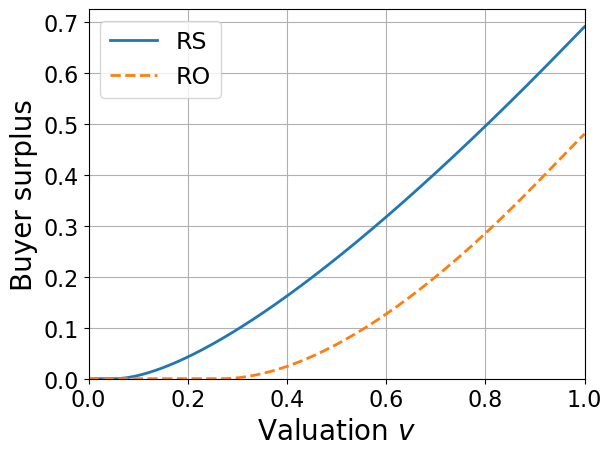}
    }
    {RS vs RO Mechanisms under a Mixture of Beta Distributions \label{Fig_opt_comparison_irregular}}
    {The reference distribution is $P_0 = 0.9\cdot\text{~Beta}(10,2)+0.1\cdot\text{Beta}(2,10)$, which is irregular, and $\tau = 0.24$.}
\end{figure}

\subsection{Out-of-Sample Performance of the Opt Mechanisms: RS vs RO Framework}

Figure \ref{Fig_RevenueImprovement} compares the seller's out-of-sample expected revenue for the optimal mechanisms under the RS and RO frameworks, based on a uniform reference distribution and evaluated using the true Beta$(\alpha,\beta)$ valuation distribution. For any target $\tau$, the ambiguity size $r$ is chosen as the unique solution to $\tau = \Pi_{RO}^*(r)$.

\begin{figure}[!ht]
\vspace{-0.1in}
    \centering
    \FIGURE
    {
        \includegraphics[width=  0.45\linewidth]{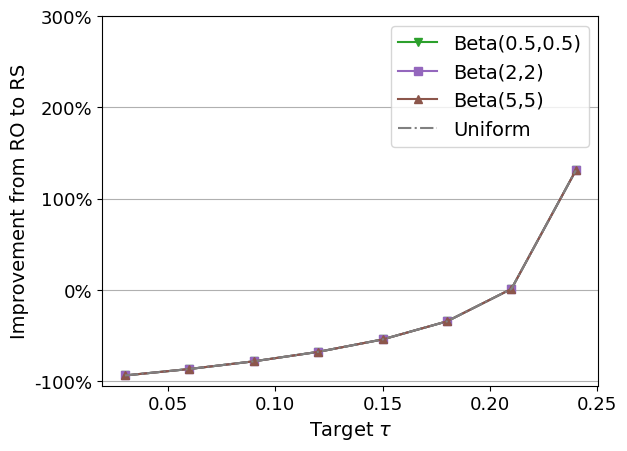}
        \includegraphics[width=  0.44\linewidth]{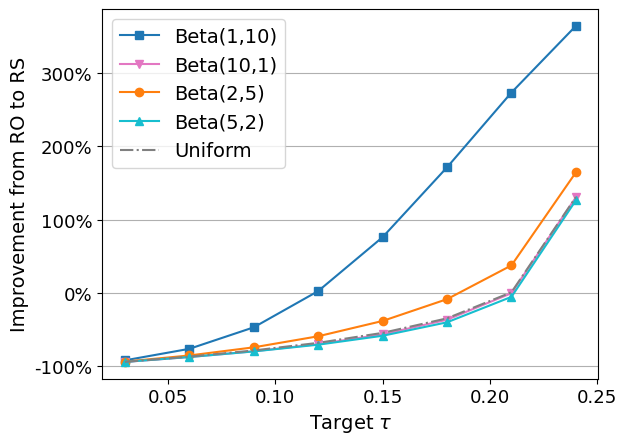}
    }
    {Out-of-Sample Revenue Improvement from RO to RS with Uniform Reference Distribution\label{Fig_RevenueImprovement} 
    }
    {The out-of-sample improvement(\%) from RO to RS is defined as: $\frac{\mathbb{E}_P[m_{RS}^*(\tilde{v})]-\mathbb{E}_P[m_{RO}^*(\tilde{v})]}{\mathbb{E}_P[m_{RO}^*(\tilde{v})]} \times 100$.}
\vspace{-0.2in}
\end{figure}

\subsection{Out-of-Sample Performance of the PP Mechanisms: RS vs RO Frameworks}

Figure \ref{Fig_improvement_PP} and \ref{Fig_heatma_PP} compare the seller's out-of-sample expected revenue for PP mechanisms under the RS and RO frameworks, based on a uniform reference distribution and evaluated using the true Beta$(\alpha,\beta)$ valuation distribution. For any target $\tau$, the ambiguity size $r$ is chosen as the unique solution to $\tau = \Pi_{RO}^*(r)$.

In Figure \ref{Fig_improvement_PP}, the improvement is measured as the percentage increase in expected revenue when switching from the RO-based PP mechanism to the RS-based PP mechanism.
The left panel considers symmetric distributions, while the right panel considers asymmetric Beta distributions. The relative performance of RS depends on the location of demand mass and the revenue target. When the distribution is skewed toward lower valuations (e.g., Beta$(10,1)$ or Beta$(5,2)$), the RS-based PP mechanism yields positive improvements when the revenue target is high. In contrast, when the distribution is skewed toward higher valuations (e.g., Beta$(1,10)$), the RO-based mechanism can outperform RS. 

\begin{figure}[!ht]
\vspace{-0.1in}
    \centering
    \FIGURE
    {
        \includegraphics[width=  0.48\linewidth]{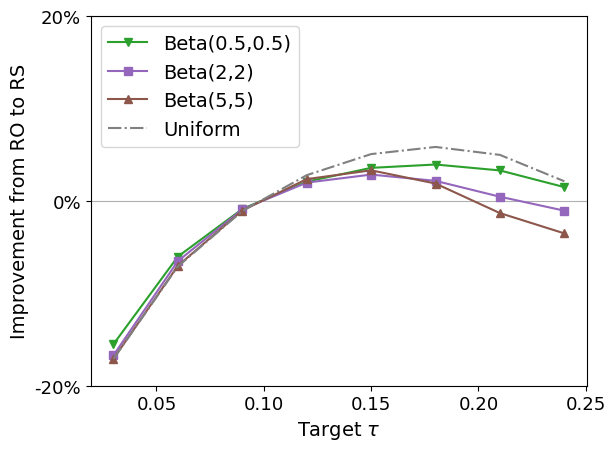}
        \includegraphics[width=  0.47\linewidth]{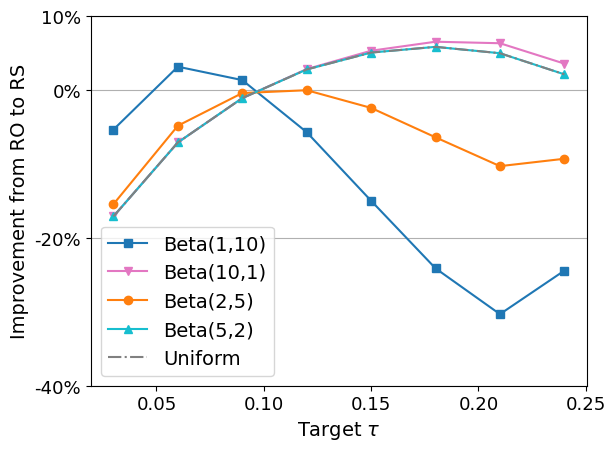}
    }
    {Out-of-Sample Revenue Improvement of PP Mechanism from RO to RS \label{Fig_improvement_PP}
    }
    {The PP mechanism is derived using a uniform reference distribution, and the out-of-sample improvement(\%) of PP mechanism from RO to RS is defined as: $\frac{\mathbb{E}_{P}[p_{RS}^{PP}\mathbbm{1}(\tilde{v}\geq p_{RS}^{PP})]-\mathbb{E}_{P}[p_{RO}^{PP}\mathbbm{1}(\tilde{v}\geq p_{RO}^{PP})]}{\mathbb{E}_{P}[p_{RO}^{PP}\mathbbm{1}(\tilde{v}\geq p_{RO}^{PP})]} \times 100$.}
\vspace{-0.2in}
\end{figure}

Figure \ref{Fig_heatma_PP} shows that the relative performance of RS and RO depends jointly on the target level and the location of the true demand distribution. RS tends to outperform RO when the demand is skewed toward lower valuations or when the target is high, while RO remains advantageous when demand is concentrated at higher valuations or when the target is low.

\begin{figure}[!ht]
\vspace{-0.1in}
    \centering
    \FIGURE
    {
        \subfigure[~$\beta =5$]{\includegraphics[width=0.33\textwidth]{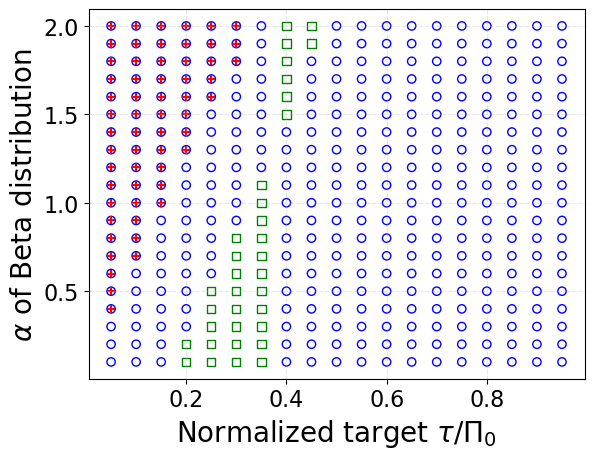}}
        \subfigure[~Low target $\tau/\Pi_0 =0.1$]{\includegraphics[width=0.33\textwidth]{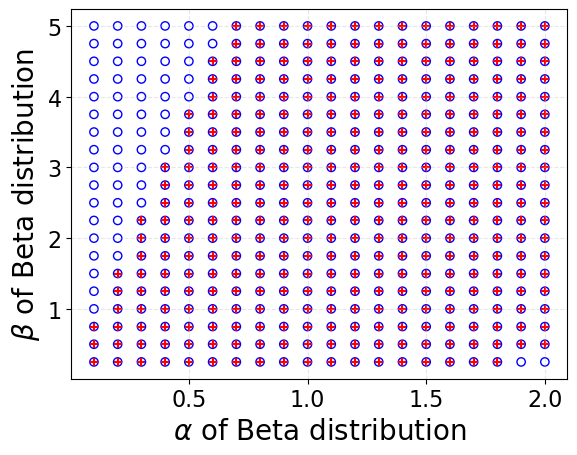}}
        \subfigure[~High target $\tau/\Pi_0 =0.9$]{\includegraphics[width=0.33\textwidth]{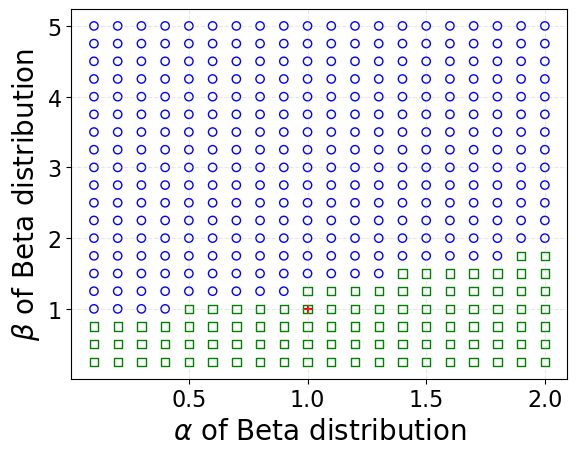}}
    }
    {Seller's Out-of-Sample Preference: PP Mechanisms under RS vs RO Frameworks \label{Fig_heatma_PP}}
    {Blue circles ``{\color{blue}$\circ$}" represent higher revenue for the RO mechanism, green squares ``{\color{green}$\square$}" for the RS mechanism, and red pluses ``{\color{red}$+$}" indicate that the Beta distribution falls within the ambiguity set of the RO problem.}
\vspace{-0.1in}
\end{figure}

\newpage
\section{Technical Proof}\label{sec-appendix-proofs}

\subsection{Proof of Proposition \ref{Lm_pricing}}
\begin{proof}
First, we establish \eqref{Prob_pricing} by applying the functional version of von Neumann's minimax theorem in Lemma \ref{Lm_minimax} (see \citealt{borwein1986fan}). Notice that, with change of variables, we have $\int_0^1 \int_0^v x\mathrm{d}q(x) \mathrm{d}P(v) = \int_0^1 x \int_x^1 \mathrm{d}P(v)\mathrm{d}q(x)$. Thus, the objective function on the right-hand side of expression \eqref{Eq_constraint} is linear in $q$. On the other hand, one can check that the Wasserstein-1 distance $d(P,P_0)$ is convex in $P$. Together with the fact that the double integral is linear in $P$, we have that the objective function is convex in $P$. In addition, it is upper semi-continuous for each fixed $P\in \mathcal{P}$ (see \citealt{carrasco2018optimal}). With Lemma \ref{Lm_minimax}, we now obtain
\beq
\begin{aligned}
\sup_{(q,m)\in\mathcal{M}}\inf_{P \in \mathcal{P}}\mathbb{E}_P[ \int_0^{\tilde{v}} x\mathrm{d}q(x)]+ k d(P,P_0) & = \max_{(q,m)\in\mathcal{M}}\inf_{P \in \mathcal{P}}\mathbb{E}_P[ \int_0^{\tilde{v}} x\mathrm{d}q(x)]+ k d(P,P_0) \\
& = \inf_{P \in \mathcal{P}}\max_{(q,m)\in\mathcal{M}}\mathbb{E}_P[ \int_0^{\tilde{v}} x\mathrm{d}q(x)]+ k d(P,P_0)
\end{aligned}
\eeq

For any given $P\in \mathcal{P}$, \cite{manelli2007multidimensional} showed that the solutions for problem $\max_{(q,m)\in\mathcal{M}}\mathbb{E}_P[ \int_0^{\tilde{v}} x\mathrm{d}q(x)]$ are posted pricing mechanisms, implying $\inf_{P \in \mathcal{P}}\max_{(q,m)\in\mathcal{M}}\mathbb{E}_P[ \int_0^{\tilde{v}} x\mathrm{d}q(x)]+ k d(P,P_0) = \inf_{P \in \mathcal{P}}\max_{x\in[0,1]}x\bar{P}_{-}(x)+ k d(P,P_0)$. This completes the proof for \eqref{Prob_pricing}.

We can rank all the feasible distributions $P\in \mathcal{P}$ according to $d(P,P_0)$. Thus, $\inf_{P \in \mathcal{P}}\max_{x\in[0,1]}x\bar{P}_{-}(x)+ k d(P,P_0)=\inf_{r\geq 0} \inf_{P\in \mathcal{P} : d(P,P_0)=r}\max_{x\in[0,1]}x\bar{P}_{-}(x)+ k r$. Next, we show that for any given $r$, $\inf_{P\in \mathcal{P} : d(P,P_0)=r}\max_{x\in[0,1]}x\bar{P}_{-}(x)= \inf_{P\in \mathcal{W}(P_0,r)}\max_{x\in[0,1]}x\bar{P}_{-}(x)$.
When $r\le \mu_0$, it's sufficient to show that for a given ambiguity size $r$, the optimum of the minimax pricing problem is achieved when the worst-case distribution lies on the boundary of the ambiguity set $\mathcal{W}(P_0,r)$. This has been verified in Lemma \ref{Lm_chen}. When $r> \mu_0$, $\inf_{P: d(P,P_0)=r} \max_{x\in [0,1]} x\bar{P}_{-}(x) = 0 = \inf_{P \in \mathcal{W}(P_0,r)} \max_{x\in [0,1]} x\bar{P}_{-}(x)$ and this completes the proof for \eqref{Eq_robustpricing}.

\begin{lem}\label{Lm_minimax}
(von Neumann-Fan minimax theorem; \citealt{borwein1986fan}) Let $g(x,y)$ be a real-valued function defined on $\mathcal{X}\times \mathcal{Y}$, where $\mathcal{X}$ and $\mathcal{Y}$ are nonempty convex sets and $\mathcal{X}$ is compact. Suppose $g(\cdot,y)$ is concave and upper semi-continuous for each fixed $y\in \mathcal{Y}$ and $g(x,\cdot)$ is convex for each fixed $x\in \mathcal{X}$. Then
$$\sup_{x\in \mathcal{X}} \inf_{y \in \mathcal{Y}}g(x,y) = \max_{x\in \mathcal{X}} \inf_{y \in \mathcal{Y}}g(x,y) = \inf_{y \in \mathcal{Y}}\max_{x\in \mathcal{X}} g(x,y) = \inf_{y \in \mathcal{Y}}\sup_{x\in \mathcal{X}} g(x,y)$$
\end{lem}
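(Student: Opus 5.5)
The inequality $\sup_{x}\inf_{y} g\le \inf_{y}\sup_{x} g$ is immediate. The plan is to prove the reverse inequality by a compactness argument combined with a finite-dimensional separation step, and to obtain the attainment claims from upper semi-continuity. No topology on $\mathcal{Y}$ is needed, only its convexity.

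First I would dispose of the two "max" equalities. For each fixed $y$, the map $g(\cdot,y)$ is upper semi-continuous on the compact set $\mathcal{X}$, so $\sup_x g(x,y)=\max_x g(x,y)$; this gives the last equality. Next, $\phi(x):=\inf_{y} g(x,y)$ is a pointwise infimum of upper semi-continuous concave functions, so it is itself upper semi-continuous (and concave). Hence $\phi$ attains its supremum on $\mathcal{X}$, which gives $\sup_x\inf_y g=\max_x\inf_y g$. It then remains to show $\max_x \phi(x)\ge \inf_y\max_x g(x,y)=:V$.

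Fix any $c<V$. For each $y$, the level set $A_y:=\{x\in\mathcal{X}: g(x,y)\ge c\}$ is closed in the compact set $\mathcal{X}$, by upper semi-continuity. If the family $\{A_y\}_{y\in\mathcal{Y}}$ has the finite intersection property, then some $x^\ast$ lies in $\bigcap_y A_y$. That point satisfies $\phi(x^\ast)\ge c$, and letting $c\uparrow V$ finishes the proof. So the core step is to show that, for any $y_1,\dots,y_n\in\mathcal{Y}$, some $x$ satisfies $\min_i g(x,y_i)\ge c$.

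This finite step is the main obstacle, and I would handle it by separation in $\mathbb{R}^n$. Pick $c'$ with $c<c'<V$. Suppose, for contradiction, that no $x$ satisfies $g(x,y_i)\ge c'$ for all $i$. Consider the set
\[
C:=\{z\in\mathbb{R}^n:\ \exists x\in\mathcal{X},\ z_i\le g(x,y_i)\ \forall i\}.
\]
By concavity of each $g(\cdot,y_i)$ and convexity of $\mathcal{X}$, the set $C$ is convex. It is also downward closed, and by assumption $c'\mathbf{1}\notin C$. Weak separation of a point from a convex set in $\mathbb{R}^n$ gives $\lambda\neq 0$ with $\lambda^\top z\le c'\lambda^\top\mathbf{1}$ for all $z\in C$. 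Downward closedness of $C$ forces $\lambda\ge 0$, so we may normalize $\lambda$ to lie in the simplex. This yields $\sum_i\lambda_i g(x,y_i)\le c'$ for every $x\in\mathcal{X}$.

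Now set $\bar y:=\sum_i\lambda_i y_i$, which lies in $\mathcal{Y}$ by convexity. Convexity of $g(x,\cdot)$ gives $g(x,\bar y)\le \sum_i\lambda_i g(x,y_i)\le c'$ for all $x$. Hence $\max_x g(x,\bar y)\le c'<V$, contradicting the definition of $V$. Therefore the finite intersections are nonempty at level $c'$, and a fortiori at level $c$, which completes the argument. The only delicate point is the strictness needed for the contradiction, and the slack $c<c'<V$ supplies it.
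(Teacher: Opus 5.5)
Your proof is correct, but it cannot be compared step by step with the paper's, because the paper gives no proof of this lemma. It quotes the result with a citation to Borwein and Zhuang (1986) and uses it as a black box in the proof of Proposition~\ref{Lm_pricing}.

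What you supply instead is a self-contained proof along the classical route to Fan-type minimax theorems, in three steps:
\begin{itemize}
\item Both maxima are attained by upper semi-continuity on the compact set $\mathcal{X}$. This also works for $\phi=\inf_{y}g(\cdot,y)$ at points where $\phi=-\infty$.
\item The infinitely many constraints reduce to finitely many $y_1,\dots,y_n$, via the closed level sets $A_y$ and the finite intersection property.
\item The finite case follows by separating $c\mathbf{1}$ from the convex, downward-closed set $C\subset\mathbb{R}^n$, and then using convexity of $\mathcal{Y}$ and of $g(x,\cdot)$ at $\bar y=\sum_i\lambda_i y_i$.
\end{itemize}
Each of these steps is valid.

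The citation buys brevity. Your route buys transparency about exactly which hypotheses are used, and it actually proves a bit more:
\begin{itemize}
\item Compactness of $\mathcal{X}$ is used only for the finite intersection property and for attainment, so no Hausdorff or vector-topology assumption is needed.
\item Concavity of $g(\cdot,y)$ enters only through the convexity of $C$, so Fan's concave-likeness would suffice.
\item Convexity in $y$ is used only at a single finite convex combination, so convex-likeness would suffice.
\item No topology on $\mathcal{Y}$ appears anywhere.
\end{itemize}

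Two minor points. First, the auxiliary level $c'$ is unnecessary: separating $c\mathbf{1}$ itself already gives $\max_x g(x,\bar y)\le c<V$, which is a strict contradiction. Second, you should state explicitly that the case $V=-\infty$ is trivial, since there is then no real $c<V$ to fix.
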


Next, we establish \eqref{Eq_robustpricing}. Clearly, 
\beq
\inf_{P \in \mathcal{P}}\left[\max_{x\in[0,1]}x\bar{P}_{-}(x)+ k d(P,P_0)\right] 
 & = &  \inf_{r\ge 0} \left\{ \inf_{P\in \mathcal{P}: d(P,P_0)=r} \max_{x\in [0,1]} x\bar{P}_{-}(x) + kr \right\} \\
 & \ge & \inf_{r \geq 0} \left\{\inf_{P \in \mathcal{W}(P_0,r)} \max_{x\in [0,1]} x\bar{P}_{-}(x) + kr \right\},
 \eeq since $\{P\in \mathcal{P}: d(P,P_0)=r\} \subseteq \mathcal{W}(P_0,r)$. On the other hand, for any $r\ge 0$, $\mathcal{W}(P_0,r) \subseteq \mathcal{P}$, thus, 
 \beq
 \inf_{P \in \mathcal{P}}\left[\max_{x\in[0,1]}x\bar{P}_{-}(x)+ k d(P,P_0)\right] \leq \inf_{P \in \mathcal{W}(P_0,r)} \max_{x\in [0,1]} x\bar{P}_{-}(x) + kr, \quad \forall r\ge 0.
 \eeq  \qed
\end{proof}

\subsection{Proof of Proposition \ref{Prop_robustpricing}}
\begin{proof}
We show that $\min_{r\in[0,\mu_0]} \left( \Pi_{RO}^{PP}(r) + kr \right) = \min_{\pi \in [0,\Pi_0]} \left[ \pi +  k\int_0^1 [\bar{P}_0-\pi/x]^+ \mathrm{d}x \right]$ with $\Pi_0:=\sup_{x\in [0,1]} x\bar{P}_0(x)$. This is because, by \eqref{equ-rev-ro-pp}, $\Pi_{RO}^{PP}(r)$ is monotonically decreasing in $r$ for any $r<\mu_0$. The one-on-one correspondence between $\pi$ and $r$ establishes the equivalence between these two optimizations by transforming the variables from the Wasserstein distance $r$ to the worst-case revenue $\pi$.
\qed
\end{proof}

\subsection{Proof of Theorem \ref{Prop_optimalPi}}
\begin{proof}
One can check that, in general, $u_j(\pi)$ is a non-decreasing function of $\pi$ and $w_j(\pi)$ a non-increasing function of $\pi$. The first-order derivative of $\rho(\pi,k)$ over $\pi$ gives $\frac{\partial}{\partial\pi} \rho(\pi,k) =\frac{\partial}{\partial\pi} \{k\sum_{j \in [J(\pi)]} \int_{u_j(\pi)}^{w_j(\pi)} (\bar{P}_0(x)-\pi/x) \mathrm{d}x + \pi\} =  k\sum_{j \in [J(\pi)]} \int_{u_j(\pi)}^{w_j(\pi)} (-1/x) \mathrm{d}x +1 = -k\sum_{j \in [J(\pi)]} \ln \frac{w_j(\pi)}{u_j(\pi)}+1$. As $\pi$ increases, $\frac{w_j(\pi)}{u_j(\pi)}$ decreases, making $\rho(\pi,k)$ a convex function of $\pi$.

The minimum $\rho^*(k)$ is achieved with $\pi^*(k)$ being the unique solution of $\pi$ to $k\sum_{j \in [J(\pi)]} \ln \frac{w_j(\pi)}{u_j(\pi)}= 1$. We observe that as $k$ increases, $\pi^*(k)$ monotonically increases. Thus, $\rho^*(k) = k\int_0^1 [\bar{P}_0(x)-\pi^*(k)/x]^+ \mathrm{d}x +\pi^*(k)=k \sum_{j \in [J(\pi^*(k))]} \int_{u_j(\pi^*(k))}^{w_j(\pi^*(k))} \bar{P}_0(x)\mathrm{d}x -\pi^*(k) + \pi^*(k) =  k \sum_{j \in [J(\pi^*(k))]} \int_{u_j(\pi^*(k))}^{w_j(\pi^*(k))} \bar{P}_0(x)\mathrm{d}x$, which is also equal to $\frac{\sum_{j \in [J(\pi^*(k))]} \int_{u_j(\pi^*(k))}^{w_j(\pi^*(k))} \bar{P}_0(x)\mathrm{d}x}{\sum_{j \in [J(\pi^*(k))]} \int_{u_j(\pi^*(k))}^{w_j(\pi^*(k))} \frac{1}{x}\mathrm{d}x}$.

For the monotonicity analysis of $\rho^*(k)$, it would be sufficient to analyze the property of $\frac{\sum_{j \in [J(\pi^*)]} \int_{u_j(\pi^*)}^{w_j(\pi^*)} \bar{P}_0(x)\mathrm{d}x}{\sum_{j \in [J(\pi^*)]} \int_{u_j(\pi^*)}^{w_j(\pi^*)} \frac{1}{x}\mathrm{d}x}$ w.r.t. $\pi^*$ because $\pi^*(k)$ is a monotonically increasing function of $k$. For simplicity of notations, let's define $A:=\int_{u_j(\pi^*)}^{w_j(\pi^*)} \bar{P}_0(x)\mathrm{d}x$ and $B:=\int_{u_j(\pi^*)}^{w_j(\pi^*)} \frac{1}{x}\mathrm{d}x$. By the definition of $u_j$ and $v_j$, we have $A>B$ and $\bar{P}_0(w_j) = \pi^*/w_j$, $\bar{P}_0(u_j) = \pi^*/u_j$. Then, $\frac{\mathrm{d}}{\mathrm{d} \pi^*} \frac{\sum_{j \in [J(\pi^*)]} \int_{u_j(\pi^*)}^{w_j(\pi^*)} \bar{P}_0(x)\mathrm{d}x}{\sum_{j \in [J(\pi^*)]} \int_{u_j(\pi^*)}^{w_j(\pi^*)} \frac{1}{x}\mathrm{d}x}= \frac{1}{B^2}\big[[\bar{P}_0(w_j(\pi^*))w_j'(\pi^*)-\bar{P}_0(u_j(\pi^*))u_j'(\pi^*)]B-(\frac{w_j'(\pi^*)}{w_j(\pi^*)}-\frac{u_j'(\pi^*)}{u_j(\pi^*)})A\big] = \frac{1}{B^2}[\frac{w_j'(\pi^*)}{w_j(\pi^*)}-\frac{u_j'(\pi^*)}{u_j(\pi^*)}](B-A)$. Since $w_j'(\pi^*)\leq 0$ and $u_j'(\pi^*)\geq 0$, together with $B<A$, we conclude that $\rho^*(k)$ is a monotonically increasing function of $\pi^*$, which in turn is an increasing function of $k$. \qed
\end{proof}

\subsection{Proof of Theorem \ref{Thm_optimalmechanism}}
\begin{proof}
The optimal satisficing mechanism should be the same as the optimal mechanism to the minimax monopoly pricing problem $\inf_{P \in \mathcal{W}(P_0,r)} \max_{x\in [0,1]} x\bar{P}_{-}(x)$ with $r$ satisfies that $\Pi_{RO}^{PP}(r) = \pi^*$. \cite{chen2024screening} gives the optimal mechanism to the minimax monopoly pricing problem as 
\beq
\left(q_{RO}^*\left(v\right),m_{RO}^*\left(v\right)\right) = \left\{\begin{array}{ll}
    \alpha^* \cdot\left(\sum_{i\in[j-1]} \ln \left(\frac{w_i}{u_i}\right) + \ln (\frac{v}{u_j}), \; \sum_{i\in[j-1]}(w_i-u_i) +(v-u_j) \right), & v \in [u_j,w_j)\\
    \alpha^* \cdot \left(\sum_{i\in[j]} \ln \left(\frac{w_i}{u_i}\right),\; \sum_{i\in[j]}(w_i-u_i) \right), & v \in[w_j,u_{j+1}),   
\end{array}\right.
\eeq
where $\alpha^* = 1/\left(\sum_{i\in [J]\ln(w_i/u_i)}\right)$. Since $k_{RS}^* = 1/\left(\sum_{i\in [J]}\ln(w_i^*/u_i^*)\right)$, this completes the proof for \eqref{equ-opt-rs-mechanism}.\qed
\end{proof}

\subsection{Proof of Proposition \ref{Prop_ContinuousPP}}

\begin{proof}
If we restrict the allocation and payment rule to $q(v) = \mathbbm{1}(v\geq p)$ and $m(v)=p \mathbbm{1}(v\geq p)$, then under a continuous reference distribution $P_0$, the satisficing constraint is equivalent to:
\beq
\begin{aligned}
&\tau - \mathbb{E}_P[m(\tilde{v})] \leq kd(P,P_0),\quad \forall P \in \mathcal{P} \\
\Longleftrightarrow \quad
&\tau \leq \inf_{P\in \mathcal{P}} \mathbb{E}_P[m(\tilde{v})]+kd(P, \mathrm{d}P_0(x)),\\
\Longleftrightarrow \quad
&\tau \leq \int_0^1 \inf_{v \in [0,1]} m(v)+k|v-x| \mathrm{d}P_0(x),\\
\Longleftrightarrow \quad
&\tau \leq \int_0^1 \inf_{v \in [0,1]} p \mathbbm{1}(v\geq p)+k|v-x| \mathrm{d}P_0(x),\\
\Longleftrightarrow \quad
&\tau \leq \int_p^1 \min(p, k(x-p)) \mathrm{d}P_0(x),\\
\Longleftrightarrow \quad
&\tau \leq \int_p^{(1+\frac{1}{k})p} k(x-p) \mathrm{d}P_0(x) +\int_{(1+\frac{1}{k})p} ^1 p \mathrm{d}P_0(x).
\end{aligned}
\eeq
Since we only need to find one price $p$ such that the constraint holds, the last inequality is equivalent to $\tau \leq \max_{p\in[0,1]} \int_p^{(1+\frac{1}{k})p} k(x-p) \mathrm{d}P_0(x) +\int_{(1+\frac{1}{k})p} ^1 p \mathrm{d}P_0(x)$
\qed
\end{proof}

\subsection{Proof of Proposition \ref{Prop_ContinuousSol}}
\begin{proof}
First, we prove that when $P_0$ is regular, then there can be at most two intersection points between $\bar{P}_0(x)$ and the level curve $c/x$. Note that $\bar{P}_0(x) = c/x$ is equivalent to $x\left(1-P_0\left(x\right)\right)=c$. Thus, if $P_0$ is regular, which means that the virtual value function $x-\frac{1-P_0(x)}{\mathrm{d}P_0(x)/\mathrm{d}x}$ is monotonically increasing, then the derivative of $x(1-P_0(x))$ (i.e., $1-P_0(x)-x\frac{\mathrm{d}P_0(x)}{\mathrm{d}x}$) crosses the x-axis for at most once. And if the derivative crosses the x-axis, it must cross the axis from above to below the axis as $x$ increases. Thus, $x(1-P_0(x))$ increases first and then decreases. For any given constant $c$, there can be at most two intersection points to the equation $x\left(1-P_0\left(x\right)\right)=c$.

Next, we show that $p^{PP*}(k)=u$. 
$\rho^{PP}(p,k)$ can be simplified to be $p\left(1-P_0\left(\frac{k+1}{k}p\right)\right)-kp\left(P_0(\frac{k+1}{k}p)-P_0(p)\right)+\int_p^{\frac{k+1}{k}p} kx \mathrm{d}P_0(x)$.
The partial derivative $\frac{\partial}{\partial p} \rho(p,k) = (k+1)p \mathrm{d}P_0(\frac{k+1}{k}p)\cdot \frac{k+1}{k}-kp\mathrm{d}P_0(p)+ 1 - P_0(\frac{k+1}{k}p) - \frac{k+1}{k}p\mathrm{d}P_0(\frac{k+1}{k}p) - k(P_0(\frac{k+1}{k}p)-P_0(p))- kp(\frac{k+1}{k}\mathrm{d}P_0(\frac{k+1}{k}p)-\mathrm{d}P_0(p)) = 1-P_0(\frac{k+1}{k}p)-k(P_0(\frac{k+1}{k}p)-P_0(p))$. Thus, $p^{PP*}(k)$ satisfies the first-order condition, i.e., it should be the solution to the equation $1-P_0(\frac{k+1}{k}p) = k(P_0(\frac{k+1}{k}p)-P_0(p))$, or equivalently, $\bar{P}_0(\frac{k+1}{k}p) = k (\bar{P}_0(p)-\bar{P}_0(\frac{k+1}{k}p))$. It is further equivalent to $\frac{k+1}{k}p \bar{P}_0(\frac{k+1}{k}p) = p \bar{P}_0(p)$.  
Thus, $p^{PP*}(k) = u$.
\qed
\end{proof}

\subsection{Proof of Proposition \ref{Prop_DiscretePP}}
\begin{proof}
When the empirical distribution $P_0$ is constructed by $N$ observations, the last RS constraint is equivalent to:
\beq
\begin{aligned}
    &\tau - \mathbb{E}_P[m(\tilde{v})] \leq kd(P,P_0),\quad \forall P \in \mathcal{P} \\
    \Longleftrightarrow \quad
    &\tau - \mathbb{E}_P[m(\tilde{v})]\leq k \cdot \mathop{\textup{inf}} \limits_{\gamma \in \Pi(P,P_0)}  \sum_{n=1}^N \alpha_n \mathbb{E}_{\tilde{v}\sim P_n}[|\tilde{v}-\hat{v}_n|],\quad \forall P \in \mathcal{P}\\
    \Longleftrightarrow \quad
    &\tau \leq \mathop{\textup{inf}} \limits_{\gamma \in \Pi(P,P_0)} \sum_{n=1}^N \alpha_n \left(\mathbb{E}_{P_n}[m(\tilde{v})]+ k \mathbb{E}_{ P_n}[|\tilde{v}-\hat{v}_n|]\right),\quad \forall P \in \mathcal{P} \\
    \Longleftrightarrow \quad
    &\tau \leq \mathop{\textup{inf}} \limits_{\gamma \in \Pi(P,P_0)} \sum_{n=1}^N \alpha_n \int_0^1 m(v)+k|v-\hat{v}_n|\mathrm{d}P_n(v),\quad \forall P \in \mathcal{P} \\
    \Longleftrightarrow \quad
    &\tau \leq \inf_{P \in \mathcal{P}} \mathop{\textup{inf}} \limits_{\gamma \in \Pi(P,P_0)} \sum_{n=1}^N \alpha_n \int_0^1 m(v)+k|v-\hat{v}_n|\mathrm{d}P_n(v)\\
    \Longleftrightarrow \quad
    &\tau \leq \sum_{n=1}^N \alpha_n \inf_{P \in \mathcal{P}}\int_0^1 m(v)+k|v-\hat{v}_n|\mathrm{d}P(v)\\
    \Longleftrightarrow \quad
    &\tau \leq \sum_{n=1}^N \alpha_n \inf_{v \in [0,1]} \left(m(v)+k|v-\hat{v}_n|\right),
\end{aligned}
\eeq
where $P_n(v):=\gamma(v|\tilde{u}=\hat{v}_n)$. \qed
\end{proof}

\subsection{Proof of Proposition \ref{Prop_DiscreteSol}}
\begin{proof}
Notice that given $p$, $k$ and $n$,
\beq
\rho_n(p,k) = \inf_{v \in [0,1]} p\mathbbm{1}(v\geq p)+k\vert \hat{v}_n-v\vert =
    \min (p,k(\hat{v}_n-p)^+). 
\eeq
When $\hat{v}_n\ge p$, $k(\hat{v}_n-p) \le p$ if and only if $p \ge  \frac{k}{k+1}\hat{v}_n$. Thus, we can reformulate the above infimum as
\beq
\rho_n(p,k) = \left\{\begin{array}{ll}
    p, \quad &\text{if } p\leq \frac{k}{k+1} \hat{v}_n\\
    k(\hat{v}_n-p)^+, \quad &\text{if } p > \frac{k}{k+1} \hat{v}_n.  
\end{array}\right.
\eeq

Under $k_{RS}^{PP}$, it is sufficient to find one feasible $p$ such that the satisficing constraint holds. Specifically when $N=2$, the constraint in problem \eqref{Prob_RS2} reduces to $\tau \leq \sup_p \sum_{n=1}^2 \alpha_n \rho_n(p,k)$. Geometric analysis shows that the supreme can only be obtained when $p = \frac{k_{RS}^{PP}}{k_{RS}^{PP}+1} \hat{v}_1$ or when $p=\frac{k_{RS}^{PP}}{k_{RS}^{PP}+1} \hat{v}_2$.

When $p = \frac{k_{RS}^{PP}}{k_{RS}^{PP}+1} \hat{v}_1$, $\sum_{n=1}^2 \alpha_n \inf_{v \in [0,1]} p\mathbbm{1}(v\geq p)+k_{RS}^{PP}\vert \hat{v}_n-v\vert = p = \frac{k_{RS}^{PP}}{k_{RS}^{PP}+1} \hat{v}_1$.

When $p = \frac{k_{RS}^{PP}}{k_{RS}^{PP}+1} \hat{v}_2$, $\sum_{n=1}^2 \alpha_n \inf_{v \in [0,1]} p\mathbbm{1}(v\geq p)+k_{RS}^{PP}\vert \hat{v}_n-v\vert = \max((1-\alpha_1)p,(1-\alpha_1) p+\alpha_1 k(\hat{v}_1-p)) = \max((1-\alpha_1)\frac{k_{RS}^{PP}}{k_{RS}^{PP}+1} \hat{v}_2,\alpha_1 k_{RS}^{PP}\hat{v}_1+(\frac{1}{k_{RS}^{PP}+1} -\alpha_1)k_{RS}^{PP}\hat{v}_2)$.

Thus, $\sup_p \sum_{n=1}^2 \alpha_n \inf_{v \in [0,1]} p\mathbbm{1}(v\geq p)+k_{RS}^{PP}\vert \hat{v}_n-v\vert = \frac{k_{RS}^{PP}}{k_{RS}^{PP}+1} \max (\hat{v}_1, (1-\alpha_1)\hat{v}_2,\alpha_1(k_{RS}^{PP}+1)\hat{v}_1+(1-\alpha_1-\alpha_1 k_{RS}^{PP}) \hat{v}_2)$. And if the maximum is attained by the first term in the maximization operator, then $p_{RS}^{PP} =  \frac{k_{RS}^{PP}}{k_{RS}^{PP}+1} \hat{v}_1$; otherwise, $p_{RS}^{PP} =  \frac{k_{RS}^{PP}}{k_{RS}^{PP}+1} \hat{v}_2$. Furthermore, the derivative of $\alpha_1 k_{RS}^{PP}\hat{v}_1+(\frac{1}{k_{RS}^{PP}+1} -\alpha_1)k_{RS}^{PP}\hat{v}_2$ w.r.t. $k_{RS}^{PP}$ is $\alpha_1 (\hat{v}_1-\hat{v}_2)+\frac{1}{(1+k_{RS}^{PP})^2}\hat{v}_2$.

When $\hat{v}_1\leq (1-\alpha_1)\hat{v}_2$, the maximum cannot be $\frac{k_{RS}^{PP}}{k_{RS}^{PP}+1}\hat{v}_1$, and thus $p_{RS}^{PP} =  \frac{k_{RS}^{PP}}{k_{RS}^{PP}+1} \hat{v}_2$. The maximum is $(1-\alpha_1)\frac{k_{RS}^{PP}}{k_{RS}^{PP}+1} \hat{v}_2$ when $k_{RS}^{PP} \geq \frac{\hat{v}_1}{\hat{v}_2-\hat{v}_1}$, and it is $\alpha_1 k_{RS}^{PP}\hat{v}_1+(\frac{1}{k_{RS}^{PP}+1} -\alpha_1)k_{RS}^{PP}\hat{v}_2$ when $k_{RS}^{PP} < \frac{\hat{v}_1}{\hat{v}_2-\hat{v}_1}$. Specifically, when $k_{RS}^{PP} = \frac{\hat{v}_1}{\hat{v}_2-\hat{v}_1}$, the maximum is $(1-\alpha_1)\hat{v}_1$. One can check that under the condition $\hat{v}_1 \leq (1-\alpha_1)\hat{v}_2$, $\frac{k_{RS}^{PP}}{k_{RS}^{PP}+1} \max ((1-\alpha_1)\hat{v}_2,\alpha_1(k_{RS}^{PP}+1)\hat{v}_1+(1-\alpha_1-\alpha_1 k_{RS}^{PP}) \hat{v}_2)$ is a monotonically increasing function of $k_{RS}^{PP}$. Thus, if $(1-\alpha_1)\hat{v}_1 < \tau <(1-\alpha_1) \hat{v}_2$, the optimal $k_{RS}^{PP}$ is the unique solution to the equation $(1-\alpha_1)\frac{k_{RS}^{PP}}{k_{RS}^{PP}+1} \hat{v}_2=\tau$. If $\tau \leq (1-\alpha_1)\hat{v}_1$, the optimal $k_{RS}^{PP}$ is the unique solution to the equation $\alpha_1 k_{RS}^{PP}\hat{v}_1+\frac{k_{RS}^{PP}(1-\alpha_1-\alpha_1 k_{RS}^{PP})}{k_{RS}^{PP}+1} \hat{v}_2=\tau$ that is smaller than $\frac{\hat{v}_1}{\hat{v}_2-\hat{v}_1}$, which is $\frac{\mu_0 - \tau - \sqrt{(\mu_0  - \tau)^2-4\alpha_1\tau(\hat{v}_2-\hat{v}_1)}}{2\alpha_1(\hat{v}_2-\hat{v}_1)}$.

Otherwise, when $\hat{v}_1 > (1-\alpha_1)\hat{v}_2$, the maximum is $\frac{k_{RS}^{PP}}{k_{RS}^{PP}+1} \hat{v}_1$ with the corresponding optimal price $p_{RS}^{PP} = \frac{k_{RS}^{PP}}{k_{RS}^{PP}+1} \hat{v}_1$ if $k_{RS}^{PP} \geq \frac{1}{\alpha_1}-1$, and it is $\alpha_1 k_{RS}^{PP}\hat{v}_1+\frac{k_{RS}^{PP}(1-\alpha_1-\alpha_1 k_{RS}^{PP})}{k_{RS}^{PP}+1} \hat{v}_2$ with the corresponding optimal price $p_{RS}^{PP} = \frac{k_{RS}^{PP}}{k_{RS}^{PP}+1} \hat{v}_2$ if $k_{RS}^{PP} < \frac{1}{\alpha_1}-1$. Specifically, when $k_{RS}^{PP} =\frac{1}{\alpha_1}-1$, the maximum is $(1-\alpha_1)\hat{v}_1$. One can check that under the condition $\hat{v}_1 > (1-\alpha_1)\hat{v}_2$, $\frac{k_{RS}^{PP}}{k_{RS}^{PP}+1} \max (\hat{v}_1,\alpha_1(k_{RS}^{PP}+1)\hat{v}_1+(1-\alpha_1-\alpha_1 k_{RS}^{PP}) \hat{v}_2)$ is a monotonically increasing function of $k_{RS}^{PP}$. Thus, if $(1-\alpha_1)\hat{v}_1 < \tau < \hat{v}_1$, the optimal $k_{RS}^{PP}$ is the unique solution to the equation $\frac{k_{RS}^{PP}}{k_{RS}^{PP}+1} \hat{v}_1=\tau$. If $\tau \leq (1-\alpha_1)\hat{v}_1$, the optimal $k_{RS}^{PP}$ is the unique solution on the interval $(0,\frac{1}{\alpha_1}-1)$ to the equation $\alpha_1 k_{RS}^{PP}\hat{v}_1+\frac{k_{RS}^{PP}(1-\alpha_1-\alpha_1 k_{RS}^{PP})}{k_{RS}^{PP}+1} \hat{v}_2=\tau$, which is $\frac{\mu_0 - \tau - \sqrt{(\mu_0  - \tau)^2-4\alpha_1\tau(\hat{v}_2-\hat{v}_1)}}{2\alpha_1(\hat{v}_2-\hat{v}_1)}$. \qed
\end{proof}

\subsection{Proof of Proposition \ref{prop-statistics-optRS}}
\begin{proof}
When $P_0$ is uniform, we have $u_1+w_1 = 1$, $w_1-u_1 = \frac{2\tau}{k_{RS}^*}$ and $u_1*w_1 = \frac{1}{4}-\frac{\tau^2}{k_{RS}^{*2}}$. Thus, 
\beq
\begin{aligned}  \mathbb{E}_{q_{RS}^*}[\tilde{p}] &= \int_{u_1}^{w_1} v\mathrm{d}q_{RS}^*(v) = k_{RS}^*(w_1-u_1) = 2\tau,\\
Var(\tilde{p}) &= \int_{u_1}^{w_1}(v-2\tau) \mathrm{d} q_{RS}^*(v) \\
&= \int_{u_1}^{w_1} k_{RS}^*(v-4\tau+\frac{4\tau^2}{v})\mathrm{d}v \\
&= k_{RS}^*\left(\frac{w_1-u_1}{2}+4\tau^2\ln\left(\frac{w_1}{u_1}\right)-4\tau \left(w_1-u_1\right)\right) \\
&= \tau(1-4\tau),\\
Skew(\tilde{p}) &= \int_{u_1}^{w_1}\left(\frac{v-2\tau}{\sqrt{\tau\left(1-4\tau\right)}}\right)^3 \mathrm{d}q_{RS}^*(v) \\
&=\frac{k_{RS}^*}{\sqrt{\tau^3(1-4\tau)^3}} \int_{u_1}^{w_1} (v^2-6\tau v+12\tau^2-8\tau^3/v)\mathrm{d}v \\
&= \frac{k_{RS}^*}{\sqrt{\tau^3(1-4\tau)^3}} \left(\frac{(w_1-u_1)(w_1^2 + w_1 u_1 + u_1^2)}{3} - 3\tau (w_1^2-u_1^2)-8\tau^3\ln\left(\frac{w_1}{u_1}\right)+12\tau^2\left(w_1-u_1\right)\right) \\
&= \frac{1}{\sqrt{\tau (1-4\tau)^3}}(\frac{2\tau^2}{3 k_{RS}^{*2}}+\frac{1}{2}-6\tau+16\tau^2).
\end{aligned}
\eeq
\qed
\end{proof}

\subsection{Proof of Theorem \ref{Thm_comparison_opt_pp_bs}}
\begin{proof}
For any given $v$, we can simplify $q_{RS}^*(v)v-m_{RS}^*(v)-\left(q_{RS}^{PP}\left(v\right)v-m_{RS}^{PP}\left(v\right)\right)$ to $ \int_0^v q_{RS}^*(x)\mathrm{d}x - \mathbbm{1}_{v\geq p_{RS}^{PP}} \left(v - p_{RS}^{PP}\right)$. Thus, when $v < p_{RS}^{PP}$, the expression is nonnegative. When $v \geq p_{RS}^{PP}$, $q_{RS}^*(v)v-m_{RS}^*(v)-\left(q_{RS}^{PP}\left(v\right)v-m_{RS}^{PP}\left(v\right)\right) = p_{RS}^{PP}-v+\int_0^v q_{RS}^*(x)\mathrm{d}x = p_{RS}^{PP}-\int_0^v(1-q_{RS}^*(x))\mathrm{d}x$, which is a monotonically decreasing function of $v$. Thus, there exists $\underline{v}_{RS}\in [p_{RS}^{PP},1]$ such that the OPT mechanism dominates the PP mechanism in buyer surplus when $v 
\leq \underline{v}_{RS}$. 

Next, consider the case when $P_0$ is a power distribution with $P_0(x) = x^{\alpha}$ ($\alpha>0$). According to Theorem \ref{Thm_optimalmechanism}, we have $q_{RS}^*(v) = k_{RS}^*\ln\left(\frac{v}{u_{RS}^*}\right)$ when $v\in [u_{RS}^*, w_{RS}^*)$. Otherwise, $q_{RS}^*(v)=0$ if $v < u_{RS}^*$ and $q_{RS}^*(v)=1$ if $v\geq w_{RS}^*$. Here, $u_{RS}^*$ and $w_{RS}^*$ satisfy that $\frac{w_{RS}^*}{u_{RS}^*} = \exp(1/k_{RS}^*)$, $u_{RS}^* \bar{P}_0(u_{RS}^*) = w_{RS}^*\bar{P}_0(w_{RS}^*)$, and $\tau = k_{RS}^*\int_{u_{RS}^*}^{w_{RS}^*}\bar{P}_0(x)\mathrm{d}x$. Thus, $\int_0^1 (1-q_{RS}^*(x))\mathrm{d}x = 1 - (\int_{u_{RS}^*}^{w_{RS}^*} q_{RS}^*(x)\mathrm{d}x+1-w_{RS}^*) = w_{RS}^*-(x q_{RS}^*(x)|_{x=u_{RS}^*}^{w_{RS}^*}-\int_{u_{RS}^*}^{w_{RS}^*} x \mathrm{d}q_{RS}^*(x)) = \int_{u_{RS}^*}^{w_{RS}^*} x \mathrm{d}q_{RS}^*(x) = k_{RS}^*(w_{RS}^*-u_{RS}^*) = \frac{w_{RS}^*-u_{RS}^*}{\ln(w_{RS}^*)-\ln(u_{RS}^*)}$. Since $\tau = \frac{\int_{u_{RS}^*}^{w_{RS}^*}\bar{P}_0(x)\mathrm{d}x}{\ln w_{RS}^* - \ln u_{RS}^*} = \frac{\int_{u_{RS}^{PP}}^{w_{RS}^{PP}}\bar{P}_0(x)\mathrm{d}x}{(w_{RS}^{PP}-u_{RS}^{PP})/u_{RS}^{PP}}$, thus $\frac{p_{RS}^{PP}}{\int_0^1 (1-q_{RS}^*(x))\mathrm{d}x} = \frac{\int_{u_{RS}^*}^{w_{RS}^*}\bar{P}_0(x)\mathrm{d}x}{w_{RS}^*-u_{RS}^*}/\frac{\int_{u_{RS}^{PP}}^{w_{RS}^{PP}}\bar{P}_0(x)\mathrm{d}x}{w_{RS}^{PP}-u_{RS}^{PP}}$. Given $\tau$, define $r^*:=\frac{w_{RS}^*}{u_{RS}^*}$, $r^{PP}:=\frac{w_{RS}^{PP}}{u_{RS}^{PP}}$, $c^*:=u_{RS}^* \bar{P}_0(u_{RS}^*)$ and $c^{PP}:=u_{RS}^{PP} \bar{P}_0(u_{RS}^{PP})$ then we have $r^*>r^{PP}$ (since $\ln(1+x)<x$ when $x>0$) and $c^*<c^{PP}$. Since $c^*$ ($c^{PP}$ respectively) uniquely determines $u_{RS}^*$ and $w_{RS}^*$ ($u_{RS}^{PP}$ and $w_{RS}^{PP}$ respectively), it will be sufficient to analyze $\frac{\int_{u_{RS}^*}^{w_{RS}^*}\bar{P}_0(x)\mathrm{d}x}{w_{RS}^*-u_{RS}^*}$ as a function of $c^*$. $\frac{\int_{u_{RS}^*}^{w_{RS}^*}\bar{P}_0(x)\mathrm{d}x}{w_{RS}^*-u_{RS}^*} = \frac{1}{\alpha+1}\cdot \frac{(w_{RS}^*)^{\alpha+1}-(u_{RS}^*)^{\alpha+1}}{w_{RS}^*-u_{RS}^*} = \frac{1}{\alpha+1}$. The last equality comes from the fact that $c^* = u_{RS}^*-(u_{RS}^*)^{\alpha+1}= w_{RS}^*-(w_{RS}^*)^{\alpha+1}$. Thus, $p_{RS}^{PP}= \int_0^1 (1-q_{RS}^*(x))\mathrm{d}x \geq \int_0^v (1-q_{RS}^*(x))\mathrm{d}x$, $\forall \, v\in[0,1]$.
\qed
\end{proof}

\subsection{Proof of Theorem \ref{Prop_Connection}}
\begin{proof}
This is a direct result of Theorem \ref{Prop_optimalPi}.
\qed
\end{proof}

\subsection{Proof of Proposition \ref{Prop_ComparisonU}}

\begin{proof}
Theorem \ref{Prop_optimalPi} shows that $$k_{RS}^* d(\pi^*(k_{RS}^*))+\pi^*(k_{RS}^*) = \tau = \Pi_{RO}^*(r).$$

Since $d(\pi^*(k_{RS}^*))$ must be positive, thus $\Pi_{RO}^*(r)>\pi^*(k_{RS}^*)$. Given that $u_1$ is a decreasing function, we can directly have that $u_1(\pi^*(k_{RS}^*)) < u_1(\Pi_{RO}^*(r))$. \qed
\end{proof}

\subsection{Proof of Theorem \ref{thm_universal-dominance-rs-ro}}
Before presenting the proof, we first establish the following lemma, which demonstrates that an increasing hazard rate guarantees specific conditions related to the sensitivity of revenue for two posted prices yielding the same revenue. In particular, proposition \ref{Prop_ContinuousSol} states that for a regular reference distribution, revenue is quasi-concave, meaning a given revenue corresponds to at most two posted prices. The subsequent lemma demonstrates that with an increasing hazard rate -- indicative of regularity -- the revenue function's sensitivity is greater to changes in the higher posted price than in the lower one between the two iso-revenue prices.
\begin{lem}\label{lem-ifr-sensitivity}
Assume the revenue target satisfies $\tau=\Pi^*_{RO}(r)$ and that the reference distribution $P_0$ has an increasing hazard rate. Then, for each $c\in[0,\Pi_0]$, the following condition holds:
\beq
\mathcal{R}_{P_0}'(u(c)) \leq \vartheta(c)\cdot\mathcal{R}_{P_0}'(w(c)), \mbox{ where } \vartheta(c) := \frac{\kappa(c)-\ln(\kappa(c))-1}{1/\kappa(c)+\ln(\kappa(c))-1}, \ \kappa(c) := \frac{w(c)}{u(c)}, \label{equ-ref-der-assumption}
\eeq
 and $u(c)\le w(c)$ are two posted prices yielding the same revenue $c$ (i.e., $\mathcal{R}_{P_0}(p) = c$).
\end{lem}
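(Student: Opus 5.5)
We read \eqref{equ-ref-der-assumption} as a comparison of the \emph{magnitudes} of the revenue slopes at the two iso-revenue prices, i.e.\ $\mathcal{R}_{P_0}'(u(c)) \le -\vartheta(c)\,\mathcal{R}_{P_0}'(w(c))$. Under regularity (Proposition \ref{Prop_ContinuousSol}) we have $\mathcal{R}_{P_0}'(u)\ge 0\ge \mathcal{R}_{P_0}'(w)$, and $\vartheta(c)>0$ for $\kappa(c)>1$. With a positive $\vartheta$ the literal signed inequality could only hold when both slopes vanish, so this absolute-value reading must be the intended content.

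\emph{Step 1: rewrite in hazard-rate form.} Write $h=P_0'/\bar P_0$ and fix $c$, with $u=u(c)$, $w=w(c)$, $\kappa=w/u$. Then
\[
\mathcal{R}_{P_0}'(p)=\bar P_0(p)\bigl(1-p\,h(p)\bigr), \qquad \bar P_0(u)=c/u, \quad \bar P_0(w)=c/w .
\]
Substituting, the target inequality becomes
\[
\kappa\bigl(1-u\,h(u)\bigr)\le \vartheta\bigl(w\,h(w)-1\bigr).
\]
The iso-revenue condition adds the integral constraint
\[
\int_u^w h(x)\,\mathrm{d}x=\ln\frac{\bar P_0(u)}{\bar P_0(w)}=\ln\kappa .
\]

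\emph{Step 2: identify the extremal case.} Suppose $h$ is constant on $[u,w]$, so $h\equiv L:=\ln\kappa/(w-u)=\ln\kappa/\bigl(u(\kappa-1)\bigr)$. Then the inequality reads
\[
\kappa(\kappa-1-\ln\kappa)\le \vartheta(\kappa\ln\kappa-\kappa+1).
\]
Dividing by $\kappa$, this is exactly $\vartheta\ge(\kappa-1-\ln\kappa)/(\ln\kappa-1+1/\kappa)$. Hence the stated $\vartheta(c)$ is precisely the constant-hazard value, which tells us where the bound comes from. It remains to show that any increasing $h$ does at least as well as this case.

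\emph{Step 3: monotone comparison.} The IHR property gives
\[
h(u)\le \frac{1}{w-u}\int_u^w h = L \le h(w).
\]
Since $wh(w)-1\ge wL-1$, the right-hand side is already at least its extremal value. The delicate part is the left-hand side, which is largest when $h(u)$ is small. For this I would combine two ingredients. First, continuity of $h$ at $u$, assuming a continuous density. Second, the fact that $u$ is the \emph{lower} crossing of the quasi-concave revenue curve, so $\mathcal{R}_{P_0}$ increases on $[0,u]$. The aim is to show that any decrease of $h(u)$ below $L$ forces a compensating increase of $h(w)$ above $L$. Concretely, I would parametrize $h$ on $[u,w]$ by its endpoint values $a=h(u)\le b=h(w)$. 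Among increasing functions with integral $\ln\kappa$, the constraint on $(a,b)$ is extremized by two-step functions. I would then check that
\[
f(a,b)=\kappa(1-ua)-\vartheta(wb-1)
\]
is maximized at $a=b=L$ along that constraint set, possibly by a Chebyshev- or rearrangement-type argument.

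\emph{Main obstacle.} I expect Step 3 to be the hard step. A hazard rate that stays near $0$ just above $u$ and then jumps to about $L$ preserves the integral but weakens the left-hand side. So the proof must genuinely use information beyond $\int_u^w h=\ln\kappa$, such as continuity or the structure of $\bar P_0$ on $[0,u]$ and $[w,1]$ (for instance log-concavity of $\bar P_0$ on the whole of $[0,1]$). If that does not close the gap, my fallback is to state the lemma under the relaxed sensitivity condition mentioned after Theorem \ref{thm_universal-dominance-rs-ro}, which is \eqref{equ-ref-der-assumption} itself, and prove IHR sufficiency only for the piecewise-constant-hazard extremal family.
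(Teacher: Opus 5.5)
Your absolute-value reading is the right one. The paper's own proof ends with $\mathcal{R}_{P_0}'(u)/\mathcal{R}_{P_0}'(w)\ge-\frac{\kappa-\ln\kappa-1}{1/\kappa+\ln\kappa-1}$ and then calls the right-hand side $\vartheta(c)$. The proof of Theorem~\ref{thm_universal-dominance-rs-ro} needs exactly $|\mathcal{R}_{P_0}'(u)|\le\vartheta(c)\,|\mathcal{R}_{P_0}'(w)|$. Your Steps 1--2 reproduce the paper's argument, and the obstacle you isolate in Step 3 is exactly where the paper's proof breaks. Since $|\mathcal{R}_{P_0}'(u)|/|\mathcal{R}_{P_0}'(w)|=\kappa\,(1-uh(u))/(wh(w)-1)$, an upper bound on this ratio needs an upper bound on $1-uh(u)$. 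The paper instead combines $wh(w)-1\ge\frac{\kappa\ln\kappa-\kappa+1}{\kappa-1}$ with $1-uh(u)\ge\frac{\kappa-1-\ln\kappa}{\kappa-1}$. The second bound points the wrong way, and the paper supplies no other ingredient.

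\textbf{The gap cannot be closed, because the claim is false for IHR distributions.} Take $\bar P_0(x)=\min\{1,e^{-\lambda(x-1/4)}\}$ on $[0,0.7)$ with $\lambda=\ln 3/0.35\approx3.139$. Let the hazard rate increase to $+\infty$ on $[0.7,1)$, so the support is $[1/4,1]$. The hazard rate is $0$ on $[0,1/4)$ and $\lambda$ on $[1/4,0.7)$, and $\bar P_0$ is log-concave on all of $[0,1)$. The revenue $\mathcal{R}_{P_0}$ increases up to $1/\lambda\approx0.319$ and decreases afterwards. For $c=0.2$ you get $u(c)=0.2$, $w(c)=0.6$ (since $0.6\,e^{-0.35\lambda}=0.2$), $\kappa=3$, and
\[
\mathcal{R}_{P_0}'(u)=1,\qquad \mathcal{R}_{P_0}'(w)=\frac{1}{3}\,(1-0.6\lambda)\approx-0.294,\qquad \vartheta(c)=\frac{2-\ln 3}{\ln 3-2/3}\approx2.087.
\]
Hence $\vartheta(c)\,|\mathcal{R}_{P_0}'(w)|\approx0.614<1=\mathcal{R}_{P_0}'(u)$.

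Here $h$ is continuous (identically zero) near $u$. The jump at $1/4$ can be smoothed, and $h$ made strictly increasing, by arbitrarily small perturbations. So neither continuity nor global log-concavity rescues Step 3.

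Your own parametrization shows the same thing. $f(a,b)$ is decreasing in both arguments. IHR together with $\int_u^w h=\ln\kappa$ only forces $a\le L\le b$, and pairs arbitrarily close to $(a,L)$ are attained by continuous increasing hazards. Since $f(L,L)=0$, you get $f(a,L)=\kappa u(L-a)>0$ whenever $a<L$. So the constant-hazard case is the equality case, not a maximizer. Letting $h(w)$ be large instead violates the reverse inequality, so IHR implies neither direction.

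Only the first half of your fallback works. The absolute-value form of the inequality in Lemma~\ref{lem-ifr-sensitivity} must be imposed as a hypothesis of Theorem~\ref{thm_universal-dominance-rs-ro}, not derived from an increasing hazard rate. The second half fails too, because the counterexample above already has a two-step piecewise-constant hazard.
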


\begin{proof}
We start with the proof of Lemma \ref{lem-ifr-sensitivity}. 
Define the hazard rate
$h(x):=\frac{p_0(x)}{\bar P_0(x)}$, where $p_0$ is the density of $P_0$. For any $c\in[0,\Pi_0]$, we have $R'_{P_0}(x)=\bar P_0(x)-xp_0(x)=\bar P_0(x)\bigl(1-xh(x)\bigr),
$ and therefore $\frac{R'_{P_0}(u)}{R'_{P_0}(w)}=\kappa \frac{1-uh(u)}{1-wh(w)}$. Next, observe that $\ln\kappa
=\ln\frac{\bar P_0(u)}{\bar P_0(w)}=
\int_u^w h(x)\,dx$.
Hence
$\frac{\ln\kappa}{w-u}$
is the average value of the hazard rate on $[u,w]$. If $h$ is increasing, then
$h(u)\le \frac{\ln\kappa}{w-u}\le h(w)$.
Multiplying the left inequality by $u$ and the right inequality by $w$ yields
$uh(u)\le \frac{u\ln\kappa}{w-u}
=\frac{\ln\kappa}{\kappa-1}$,
and
$wh(w)\ge \frac{w\ln\kappa}{w-u}
=\frac{\kappa\ln\kappa}{\kappa-1}$.
Therefore,
$1-uh(u)\ge \frac{\kappa-1-\ln\kappa}{\kappa-1}$,
$1-wh(w)\le -\,\frac{\kappa\ln\kappa-\kappa+1}{\kappa-1}$.
Noting that $1-wh(w)<0$, we obtain
$\frac{R'_{P_0}(u)}{R'_{P_0}(w)}
\ge
\kappa\,
\frac{\frac{\kappa-1-\ln\kappa}{\kappa-1}}
{-\,\frac{\kappa\ln\kappa-\kappa+1}{\kappa-1}}=
-\frac{\kappa-\ln\kappa-1}{1/\kappa+\ln\kappa-1}
=\vartheta(c)$.
Since $R'_{P_0}(w)<0$, multiplying both sides by $R'_{P_0}(w)$ reverses the inequality and gives
$R'_{P_0}(u)\le \vartheta(c)\,R'_{P_0}(w)$, which completes the proof for Lemma \ref{lem-ifr-sensitivity}. \qed

Now, we are ready to establish the proof of Theorem \ref{thm_universal-dominance-rs-ro}. Let $u_{RS}^*$ and $w_{RS}^*$ denote the two solutions to the equation $\mathcal{R}_{P_0}(x) = \pi^*(k_{RS}^*)$, and $u_{RO}^*$ and $w_{RO}^*$ for the solutions to the equation $\mathcal{R}_{P_0}(x)=\tau$. The analysis in Proposition \ref{Prop_ComparisonU} shows that $u_{RS}^*<u_{RO}^*$ and $w_{RS}*>w_{RO}^*$. Since $q_{RS}*(0)=q{RS}^*(0)=0$ and $q_{RS}*(1)=q_{RS}^*(1)=1$, thus there exists a threshold $\underline{v}_q$ such that $q_{RS}^*(v)\geq q_{RO}^*(v)$ when $v\leq \underline{v}_q$.

The piecewise linear structure of $m^*(v)$ implies that if $m_{RO}^*(1)\geq m_{RS}^*(1)$, then there exists a threshold $\underline{v}_m$ such that $m_{RS}^*(v)\geq m_{RO}^*(v)$ when $v\leq \underline{v}_m$ and $m_{RS}^*(v)\leq m_{RO}^*(v)$ otherwise. Thus, to show the single-crossing property of the payment function, it is sufficient to show that $m_{RO}^*(1)\geq m_{RS}^*(1)$, or equivalently, $\int_0^1 q_{RS}^*(x)\mathrm{d}x \geq \int_0^1 q_{RO}^*(x)\mathrm{d}x$.

For the comparison over the buyer surplus $q(v)v-m(v)$, we start with the observation that $q(v)v-m(v) = \int_0^v q(x)\mathrm{d}x$. Thus, $(q_{RS}^*(v)v-m_{RS}^*(v))-(q_{RO}^*(v)v-m_{RO}^*(v)) = \int_0^v (q_{RS}^*(x)-q_{RO}^*(x))\mathrm{d}x$. If $\int_0^v (q_{RS}^*(x)-q_{RO}^*(x))\mathrm{d}x$ is positive for all $v\in [0,1]$, then the optimal RS mechanism dominate the optimal  For the buyers with valuation $v \leq \underline{v}_q$ and thus $q_{RS}^*(v)\geq q_{RO}^*(v)$, the integral $\int_0^v (q_{RS}^*(x)-q_{RO}^*(x))$ is positive. Further, $\int_0^v (q_{RS}^*(x)-q_{RO}^*(x))$ increases with $v$ when $v\leq \underline{v}_q$. When $v > \underline{v}_q$, the integral $\int_0^v (q_{RS}^*(x)-q_{RO}^*(x)) \mathrm{d}x$ decreases with $v$. Thus, if $\int_0^1 (q_{RS}^*(x)-q_{RO}^*(x))\mathrm{d}x$ is positive, then we can conclude that the single-crossing property of the payment function holds and the RS mechanism dominates the RO mechanism over the buyer surplus. 

Notice that $\int_0^1 q_{RS}^*(x)\mathrm{d}x = \int_{u_{RS}^*}^{w_{RS}^*} q_{RS}^*(x) \mathrm{d}x + 1-w_{RS}^* = 1-k_{RS}^*(w_{RS}^*-u_{RS}^*) = 1-\frac{w_{RS}^*-u_{RS}^*}{\ln(w_{RS}^*)-\ln(u_{RS}^*)}$, thus $\int_0^1 (q_{RS}^*(x)-q_{RO}^*(x))\mathrm{d}x = \frac{w_{RO}^*-u_{RO}^*}{\ln(w_{RO}^*)-\ln(u_{RO}^*)}-\frac{w_{RS}^*-u_{RS}^*}{\ln(w_{RS}^*)-\ln(u_{RS}^*)}$. It is thus sufficient to show that $\frac{\ln(w(c))-\ln(u(c))}{w(c)-u(c)}$ decreases in $c$ with $u(c)<w(c)$ being the intersection between $\bar{P}_0(x)$ and the level curve $c/x$. The derivative of $\frac{\ln(w(c))-\ln(u(c))}{w(c)-u(c)}$ is $\frac{1}{(w-u)^2}\cdot \left(\left(w-u\right)\left(\frac{w'}{w}-\frac{u'}{u}\right)-\left(w'-u'\right) \ln \frac{w}{u} \right) = \frac{1}{(w-u)^2}\cdot \left(w'\left(\frac{w-u}{w}-\ln \frac{w}{u} \right)+u'\left(-\frac{w-u}{u}+\ln \frac{w}{u}\right)\right) = \frac{1}{(w-u)^2}\cdot \left(\left(\frac{w-u}{w}-\ln \frac{w}{u} \right)/\mathcal{R}_{P_0}'(w)+\left(-\frac{w-u}{u}+\ln\left(\frac{w}{u}\right)\right)/\mathcal{R}_{P_0}'(u)\right)$. The last equality is obtained through the implicit differentiation of the equation $\mathcal{R}_{P_0}(u(c))=\mathcal{R}_{P_0}(w(c))=c$. Notice that $\mathcal{R}_{P_0}'(u)>0>\mathcal{R}_{P_0}'(w)$. Thus, the sign of the derivative of $\frac{\ln(w(c))-\ln(u(c))}{w(c)-u(c)}$ is the same as the sign of $(1-\frac{1}{\kappa(c)}-\ln \kappa(c) )\frac{\mathcal{R}_{P_0}'(u)}{\mathcal{R}_{P_0}'(w)}+(\ln \kappa(c)+1-\kappa(c))$. Since $1-\frac{1}{r}-\ln r < 0$ when $r>1$, $\left(1-\frac{1}{\kappa(c)}-\ln \kappa(c)\right)\frac{\mathcal{R}_{P_0}'(u)}{\mathcal{R}_{P_0}'(w)}+\left(1-\kappa(c) +\ln \kappa(c)\right) \leq 0$, which completes the proof.
\qed
\end{proof}



\subsection{Proof of Proposition \ref{Prop_ComparisonStatistics}}

\begin{proof}
We can see that $\mathbb{E}_{q_{RO}^*}[\tilde{p}] = \alpha(w_{RO}-u_{RO}) =\frac{\sqrt{1-4\tau}}{\ln(1+\sqrt{1-4\tau})-\ln(1-\sqrt{1-4\tau})} >\frac{\sqrt{1-4\Lambda(k_{RS}^*)}}{\ln(1+\sqrt{1-4\Lambda(k_{RS}^*)})-\ln(1-\sqrt{1-4\Lambda(k_{RS}^*)})} =\mathbb{E}_{q_{RS}^*}[\tilde{p}] $. 

Next, we show that $\mathbb{E}_{q_{RO}^*}[\tilde{p}] = \alpha(w_{RO}-u_{RO}) = \frac{w_{RO}-u_{RO}}{\ln w_{RO}-\ln u_{RO}}>p_{RO}^{PP} = \frac{1-\sqrt{2r}}{2}$. That is equivalent to show $2r>(1-2\frac{w_{RO}-u_{RO}}{\ln w_{RO}-\ln u_{RO}})^2$. Notice that $2r = w_{RO}-u_{RO}-2w_{RO}u_{RO}\ln \frac{w_{RO}}{u_{RO}}$ and $w_{RO}+u_{RO} = 1$. Thus, 
\beq
\begin{aligned}
&2r>(1-2\frac{w_{RO}-u_{RO}}{\ln w_{RO}-\ln u_{RO}})^2 \\
\Longleftrightarrow \quad
&\frac{w_{RO}-u_{RO}}{w_{RO}+u_{RO}}-2\frac{w_{RO}u_{RO}}{(w_{RO}+u_{RO})^2}\ln \frac{w_{RO}}{u_{RO}}>(1-\frac{w_{RO}-u_{RO}}{w_{RO}+u_{RO}}\frac{2}{\ln w_{RO}-\ln u_{RO}})^2 \\
\Longleftrightarrow \quad
&\frac{t-1}{t+1}-\frac{2t}{(t+1)^2}\ln t>(1-\frac{2(t-1)}{(t+1)\ln t})^2 \quad (t:=\frac{w_{RO}}{u_{RO}}> 1)\\
\Longleftrightarrow \quad
&2t\ln^3 t+2(t+1)\ln^2 t -4(t^2-1)\ln t+4(t-1)^2<0.
\end{aligned}
\eeq
Define $f(t):=2t\ln^3 t+2(t+1)\ln^2 t -4(t^2-1)\ln t+4(t-1)^2$. Then one can check that $f(t)$ decreases in $t\in (1,\infty)$. Given that $f(1)=0$, we confirm that $\mathbb{E}_{q_{RO}^*}[\tilde{p}] > p_{RO}^{PP}$.
\qed
\end{proof}

\end{APPENDICES}

\end{document}